\newtheorem{theorem}{Theorem}[section]  
\newtheorem{lemma}{Lemma}[section] 
\newtheorem{definition}{Definition}[section]
\newtheorem{remark}{Remark}[section]
\newtheorem{conjecture}{Conjecture}[section]
\begin{document}


\title{Volume comparison for symmetric spaces of non-compact type of rank 1}

\author{Jiaqi Chen}
\address{School of Electrical Engineering and Automation, Xiamen University of Technology, Xiamen 361024, Fujian, P.R. China.}
\email{chenjiaqi@xmut.edu.cn}

\author{Yufei Shan}
\address{School of Mathematical Sciences, Shanghai Jiao Tong University,  Shanghai 200240, P.R. China.}
\email{universeplane1991@sjtu.edu.cn}

\author{Yinghui Ye}
\address{Department of Mathematics, Sun Yat-sen University, Guangzhou, Guangdong 510275, P.R. China.}
\email{yeyh8@mail2.sysu.edu.cn}

\subjclass[2020]{53B20; 53E20}

\thanks{Yinghui Ye is partially supported by National Key R\&D Program of China (2024YFA1015000) and National Natural Science Foundation of China (No.12571065); Jiaqi Chen is partially supported by the Scientific Research Foundation of Xiamen University of Technology under Grant (No.YKJ23009R), the Scientific Research Foundation for Young and middle-aged Teachers in Fujian Province under Grant (No.JAT231105); Yufei Shan is partially supported by National Natural Science Foundation of China (No.12271348)}

\begin{abstract}
    Motivated by Schoen's conjecture on the volume functional for closed hyperbolic manifolds, we generalize the volume comparison theorem of Hu, Ji, and Shi and establish a volume comparison theorem for rank 1 symmetric spaces of non-compact type under a scalar curvature condition. Furthermore, we prove a rigidity result. Our proof uses the normalized Ricci--DeTurck flow to analyze the asymptotic behavior of the volume functional and to derive monotonicity properties. This extends the classical volume comparison framework to symmetric spaces of non-compact type.
\end{abstract}
\maketitle
\section{Introduction}

The volume of a Riemannian manifold is a fundamental geometric quantity that plays a central role in geometric analysis. The classical Bishop-Gromov volume comparison theorem assumes a lower bound on the Ricci curvature. Schoen proposed the following conjecture concerning the volume functional $\mathrm{Vol}(\cdot)$ of a closed hyperbolic manifold.
\begin{conjecture}[Schoen's Conjecture~\cite{Sc1989}]
    Let $(M^n, g_0)$ be a closed hyperbolic manifold. Let $g$ be another metric on $M$ with scalar curvature $R(g) \geq R(g_0)$, then $\mathrm{Vol}_g(M) \geq \mathrm{Vol}_{g_0} (M)$.
\end{conjecture}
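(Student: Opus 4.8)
The plan is to deform $g$ toward $g_0$ along a normalized Ricci-type flow and to read off the inequality from the monotonicity of the volume under the scalar curvature hypothesis. Since $g_0$ is Einstein, $\mathrm{Ric}(g_0) = c\,g_0$ for the appropriate negative constant $c$ (so $R(g_0) = cn$; in the hyperbolic case $c = -(n-1)$), I would run the normalized Ricci--DeTurck flow $\partial_t g = -2\,\mathrm{Ric}(g) + 2c\,g + \mathcal{L}_{W(g,g_0)}g$ with $g(0)=g$, whose relevant fixed point is $g_0$. The DeTurck term makes the flow strictly parabolic and is the natural setting for a stability analysis near $g_0$; since it is the Lie derivative along a $g$-dependent vector field, the Ricci--DeTurck solution differs from the honest normalized Ricci flow by a time-dependent diffeomorphism, so the diffeomorphism-invariant quantity $\mathrm{Vol}(g(t))$ is the same for both.

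The first key step is that the scalar curvature bound is preserved: writing $v = R(g(t)) - cn$, the evolution of $R$ together with $2|\mathrm{Ric}|^2 \ge \tfrac{2}{n}R^2$ gives $\partial_t v \ge \Delta v + \tfrac{2}{n}\,v\,(v+nc)$ on the closed manifold $M$, and since $v \equiv 0$ is a zero of the comparison ODE $\dot y = \tfrac{2}{n}y(y+nc)$ that is approached from above, the hypothesis $v(0) = R(g)-R(g_0)\ge 0$ propagates to $v(t)\ge 0$ for all $t$ in the interval of existence. The second key step is the volume identity $\partial_t\,dV_{g(t)} = (cn - R(g(t)))\,dV_{g(t)}$, which gives $\frac{d}{dt}\mathrm{Vol}(g(t)) = -\int_M v\,dV_{g(t)} \le 0$, so the volume is non-increasing along the flow. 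The third step is long-time existence and convergence: invoking dynamical stability of $g_0$ under the normalized Ricci--DeTurck flow — Ye's theorem in the real hyperbolic case, and the Hu--Ji--Shi type input extended to the complex, quaternionic and octonionic rank-$1$ spaces that the present paper builds on — one obtains, provided $g$ is sufficiently close to $g_0$ in a suitable $C^k$ (or Sobolev) topology, that the flow exists for all $t\ge 0$ and converges smoothly to $g_0$, so $\mathrm{Vol}(g(t))\to\mathrm{Vol}(g_0)$. Combining the three steps, $\mathrm{Vol}(g) = \mathrm{Vol}(g(0)) \ge \lim_{t\to\infty}\mathrm{Vol}(g(t)) = \mathrm{Vol}(g_0)$.

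For the rigidity statement, if $\mathrm{Vol}(g) = \mathrm{Vol}(g_0)$ then the monotonicity of Step 2 with equal endpoints forces $v\equiv 0$, i.e. $R(g(t))\equiv cn$ along the whole flow; feeding this back into the evolution of $R$ forces equality in $2|\mathrm{Ric}|^2\ge\tfrac{2}{n}R^2$, whence $\mathrm{Ric}(g(t))\equiv c\,g(t)$ and in particular $g$ is Einstein. An Einstein metric is stationary for the normalized Ricci flow, so that flow is constant equal to $g$, yet by Step 3 it converges to $g_0$ up to diffeomorphism; hence $g$ is isometric to $g_0$.

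The step I expect to be the main obstacle is Step 3: for an arbitrary metric $g$ the normalized Ricci flow may form singularities or fail to converge to $g_0$, so — exactly as for the Hu--Ji--Shi result being generalized — the theorem is established only under a perturbative closeness hypothesis, and removing it is the substance of the full conjecture and out of reach of this method. A secondary technical difficulty is proving the dynamical stability itself in the non-real cases, where $g_0$ is merely Einstein rather than of constant curvature, so that one must control the spectrum of the Lichnerowicz Laplacian of $g_0$ (verifying the appropriate spectral gap) instead of relying on the curvature-pinching shortcuts available for $\mathbb{RH}^n$.
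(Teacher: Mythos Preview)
There is no proof in the paper to compare against: this statement is Schoen's \emph{conjecture}, quoted in the introduction as motivation, and the paper does not claim to prove it. The paper notes that the three-dimensional case follows from Hamilton--Perelman, and then turns to a different (non-compact, perturbative) setting for its main result. So your proposal is not a competing argument for a theorem of the paper --- it is an attempt on an open problem.

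Your outline is the standard and correct argument for the \emph{perturbative} version in the closed case: preservation of $R\ge R_0$ by the maximum principle, monotonicity of volume from the trace of the evolution equation, and dynamical stability of $g_0$ under normalized Ricci(--DeTurck) flow. You yourself identify the genuine gap: Step~3 needs $g$ close to $g_0$ for the stability theorem to apply, and without that closeness the flow may develop singularities or converge to something else, so you recover only a local version of the conjecture (essentially Ye's result in the compact case), not the conjecture as stated. Two minor corrections: first, the conjecture concerns closed \emph{real} hyperbolic manifolds, so the stability input you want is Ye's theorem --- the Hu--Ji--Shi result and the present paper's extension to $\mathbb{C}\mathbb{H}^m$, $\mathbb{H}\mathbb{H}^m$, $\mathbb{O}\mathbb{H}^2$ are for \emph{non-compact} spaces and do not bear on this statement; second, in the rigidity step your claim that ``an Einstein metric is stationary for the normalized Ricci flow'' needs the Einstein constant to be exactly $c$, which it is here since $R(g)\equiv cn$, but you should say so.
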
 

This conjecture remained largely open until the three-dimensional case was resolved as a corollary of the work of Hamilton and Perelman on the geometrization conjecture \cite{Pe2002, Pe2003}. Subsequently, Agol, Storm, and Thurston established similar results for compact hyperbolic 3-manifolds with minimal surface boundaries \cite{AST2007}. In \cite{MT2009}, Miao and Tam demonstrated that the conjecture for closed manifolds does not directly extend to manifolds with boundary under only Dirichlet boundary conditions. Specifically, they provided a counterexample involving geodesic balls in three-dimensional hyperbolic space, using variations of the volume functional under scalar curvature constraints. In \cite{Yu2023}, Yuan proved the volume comparison theorem for geodesic balls in V-Static spaces under some boundary conditions. Subsequently, Lin and Yuan explored the volume comparison theorem concerning the Q-curvature \cite{LY2016, LY2022}, and Chen, Fang, He and Zhong generalized the result of Lin and Yuan with respect to $\sigma_k$-curvature in \cite{CFHZ2025}.

A natural question is whether an analogous conjecture holds for non-compact Einstein manifolds. In \cite{HJS2016}, Hu, Ji, and Shi gave an affirmative answer for perturbations of conformally compact Einstein manifolds. In this paper, we generalize their result to rank 1 symmetric spaces of non-compact type. 

In order to state our main result, we need to introduce some notation and basic concepts. Let $(M^n, g_0)$ be a Riemannian manifold. We denote the global norm by $\| \cdot \|_{C^{k}}$, the pointwise norm by $| \cdot |$, and the distance function between $x, p_0\in M$ by $\mathrm{dist}(x, p_0)$ (or simply $d(x, p_0)$ or $d(x)$). We denote by $\nabla$ the covariant derivative, by $R_{ijkl}$ the Riemann curvature tensor with indices $i, j, k, l$ (or simply $\mathrm{Rm}$), by $R_{ij}$ the Ricci curvature tensor with indices $i, j$, by $B(x, r)$ the geodesic ball centered at $x\in M$ with radius $r$, and by $d\mu_{g_0}$ the volume form. All these are with respect to the background metric $g_0$, unless otherwise stated. Moreover, we use $g_{0}^{ij}$ for the inverse of the metric $g_0$ and $\hat{g}^{ij}$ for the inverse of another metric $\hat{g}$.

We call an isometry $\Phi : M \rightarrow M$ with $\Phi(p)=p$, a reflection at $p\in M$, if $d \Phi|_{p}=-\mathrm{id}_{T_{p}M}$. A Riemannian manifold $(M^n, g_0)$ is called a \textbf{symmetric space} if for an arbitrary point $p\in M$ there exists a reflection $\Phi_p$. One of the most important properties of symmetric spaces is that the Riemann curvature tensors of symmetric spaces are parallel, i.e. $\nabla \mathrm{Rm} \equiv 0$. 

By the de-Rham Decomposition Theorem, the universal covering of the symmetric spaces can be expressed as a product $M_1 \times \cdots \times M_m$ of irreducible symmetric spaces. All $M_i$ are Einstein. If all Einstein constants $\lambda_i$ are negative, then, $M$ is said to be \textbf{of non-compact type}. Let $g$ be another arbitrary metric on $M$. As defined in \cite{HJS2016}, we introduce the concept of \textbf{relative volume} of $g$ with respect to $g_0$ by 
\begin{align}\label{definition of relative volume}
    V_{g_0}(g) = \lim_{i\rightarrow +\infty} \left(\mathrm{Vol}(\Omega_i, g) - \mathrm{Vol}(\Omega_i, g_0) \right), 
\end{align}
 where $\{\Omega_i\}_{i = 1}^{+\infty}$ is a compact exhaustion of $M$, i.e. $\Omega_i \subset M$ are compact sets satisfying $\lim_{i \rightarrow +\infty} \Omega_i = M$. We shall see that if $g$ has sufficient decay relative to $g_0$, i.e. $\|e^{\tau d(x)}(g - g_0)\|_{C^0} = O(1)$, where $d(x)$ is the distance function with respect to $g_0$ between $x\in M$ and a fixed point $p_0 \in M$ and $\tau > \overline{\lim}_{d \rightarrow +\infty} \Delta_{g_0} d$ is a constant, then, the relative volume of $g$ with respect to $g_0$ exists and is independent of the choice of the compact exhaustion (\Cref{existence of relative volume}). Following the idea of \cite{HJS2016}, we will utilize the normalized Ricci-DeTurck flow \eqref{NRDF} to prove the following "volume" comparison theorem for symmetric spaces of non-compact type of rank 1. 
\begin{theorem}\label{main theorem}
    Suppose that $(M^n, g_0)$ is a simply connected irreducible symmetric space of non-compact type of rank 1 with $n \geq N$, where $N = 4, 6, 4, 16$ for real, complex, quaternionic and octonionic hyperbolic spaces respectively. Let $g$ be another Riemannian metric on $M$. Then, for $\tau > \lim_{d\rightarrow+\infty}\Delta_{g_0} d(x)$, there exists some $\varepsilon_0 > 0$ such that if 
    \begin{align*}
        \|e^{\tau d}(g - g_0)\|_{C^{1}} \leq \varepsilon_0,
    \end{align*}
    and 
    \begin{align*}
        R \geq R_0, 
    \end{align*}
    where $R$ and $R_0$ are the scalar curvatures with respect to $g$ and $g_0$ respectively, then, 
    \begin{align*}
        V_{g_0}(g) \geq 0. 
    \end{align*}
    In particular, if $V_{g_0}(g) = 0$, there exists a $C^{\infty}$ diffeomorphism $\Phi: M \rightarrow M$, such that $g = \Phi^{*}g_0$. 
\end{theorem}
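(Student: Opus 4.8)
The plan is to deform $g$ to $g_0$ along the normalized Ricci--DeTurck flow (NRDF) with background metric $g_0$, and to read off the inequality from a monotonicity property of the relative volume along the flow. Since $(M^n,g_0)$ is irreducible it is Einstein; write $R_{ij}(g_0)=\lambda (g_0)_{ij}$ with $\lambda<0$, so that $R_0=n\lambda$. We run
\begin{align*}
\partial_t g_{ij}=-2\bigl(R_{ij}(g)-\lambda g_{ij}\bigr)+\bigl(\mathcal{L}_{W}g\bigr)_{ij},\qquad W^{k}=g^{ij}\bigl(\Gamma^{k}_{ij}(g)-\Gamma^{k}_{ij}(g_0)\bigr),\qquad g(0)=g,
\end{align*}
which is strictly parabolic and has $g_0$ as a stationary solution. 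Tracing the equation gives $\partial_t(d\mu_{g})=\bigl(-(R_g-R_0)+\operatorname{div}_g W\bigr)\,d\mu_{g}$, so for a compact exhaustion $\{\Omega_i\}$ of $M$,
\begin{align*}
\frac{d}{dt}\mathrm{Vol}(\Omega_i,g(t))=-\int_{\Omega_i}(R_{g(t)}-R_0)\,d\mu_{g(t)}+\int_{\partial\Omega_i}\langle W,\nu\rangle\,dA.
\end{align*}

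The technical heart, and the place where the hypothesis $n\ge N$ is used, is to establish that for $\varepsilon$ sufficiently small the NRDF exists for all $t\ge 0$ and converges to $g_0$ exponentially in weighted norms, $\|e^{\tau' d}(g(t)-g_0)\|_{C^{k}}\le C_k e^{-\delta t}$ for every $k$ and suitable $\tau',\delta>0$ with $\tau'>\lim_{d\to\infty}\Delta_{g_0}d$. This rests on a weighted linear stability estimate for the linearization of the NRDF at $g_0$, a Lichnerowicz-type operator of the form $\Delta_L+2\lambda$ on symmetric $2$-tensors: one needs its spectrum, on tensors weighted by $e^{\tau d}$, to lie in $(-\infty,-\delta]$. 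A rank one symmetric space of noncompact type has a positive spectral gap for its Laplacian, and whether the zeroth order curvature term of $\Delta_L$ is dominated by this gap is a computation with the explicit curvature tensor of the real, complex, quaternionic or octonionic hyperbolic space; this is exactly where $N=4,6,4,16$ enters, in parallel with the hyperbolic case of \cite{HJS2016}. Granting this, $g(t)\to g_0$ in the weighted $C^0$ norm gives $V_{g_0}(g(t))\to V_{g_0}(g_0)=0$, and since $|W|=O(|\nabla_{g_0}(g-g_0)|)=O(e^{-\tau' d})$ decays strictly faster than the area of geodesic spheres of $(M,g_0)$ grows (their exponential growth rate is $\lim_{d\to\infty}\Delta_{g_0}d<\tau'$), the boundary integral above tends to $0$ as $i\to\infty$, whence
\begin{align*}
\frac{d}{dt}V_{g_0}(g(t))=-\int_M(R_{g(t)}-R_0)\,d\mu_{g(t)}.
\end{align*}

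Next we would show that the bound $R\ge R_0$ is preserved along the flow. The scalar curvature evolves by $\partial_t R_g=\Delta_g R_g+2|R_{ij}(g)|^2-2\lambda R_g+\langle W,\nabla R_g\rangle$, so at a spatial minimum $\partial_t R_g\ge 2|R_{ij}(g)|^2-2\lambda R_g\ge \tfrac{2}{n}R_g(R_g-R_0)$; the right-hand side vanishes at $R_g=R_0$ and is strictly positive for $R_g$ slightly below $R_0$ (a product of two negative factors), so $R_0$ is a subsolution. The exponential spatial decay from the previous step forces $R_{g(t)}-R_0\to 0$ at spatial infinity, uniformly on compact time intervals, which is precisely what is needed to run the maximum principle on the noncompact manifold $M$ (comparing with barriers $\epsilon e^{Lt}$ and letting $\epsilon\to 0$). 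Hence $\int_M(R_{g(t)}-R_0)\,d\mu_{g(t)}\ge 0$ for all $t$, so $V_{g_0}(g(t))$ is non-increasing, and therefore $V_{g_0}(g)=V_{g_0}(g(0))\ge\lim_{t\to\infty}V_{g_0}(g(t))=0$.

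For the rigidity statement, suppose $V_{g_0}(g)=0$. Integrating the last identity over $[0,\infty)$ gives $\int_0^\infty\!\int_M(R_{g(t)}-R_0)\,d\mu_{g(t)}\,dt=V_{g_0}(g(0))-\lim_{t\to\infty}V_{g_0}(g(t))=0$, and since the integrand is nonnegative and continuous in $t$ we get $R_{g(t)}\equiv R_0$ for all $t$. Substituting this back into the evolution equation for $R_g$ forces $|R_{ij}(g(t))|^2=\lambda R_0=R_0^2/n$, i.e.\ equality in $|R_{ij}|^2\ge R^2/n$, so $g(t)$ is Einstein with $R_{ij}(g(t))=\lambda (g(t))_{ij}$ for every $t\ge 0$, in particular for $t=0$. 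Then the NRDF reduces to $\partial_t g=\mathcal{L}_{W}g$, so $g(t)$ stays in the diffeomorphism orbit of $g$: $g(t)=\psi_t^{*}g$, where $\psi_t$ is generated by the time-dependent vector field $W(g(t),g_0)$. As $|W(g(t),g_0)|=O(e^{-\delta t})$ is integrable in $t$, the $\psi_t$ converge in $C^\infty$ to a diffeomorphism $\psi_\infty$, and passing to the limit in $g(t)=\psi_t^{*}g$ with $g(t)\to g_0$ yields $g_0=\psi_\infty^{*}g$, i.e.\ $g=\Phi^{*}g_0$ with $\Phi=\psi_\infty^{-1}$. The main obstacle throughout is the weighted long-time existence and stability estimate of the second paragraph; the monotonicity computation, the maximum principle, and the rigidity argument are comparatively soft once that is established.
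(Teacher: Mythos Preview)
Your strategy matches the paper's exactly: run the NRDF with background $g_0$, establish weighted long-time convergence, show the boundary term vanishes so that $\frac{d}{dt}V_{g_0}(g(t))=-\int_M(R_{g(t)}-R_0)\,d\mu$, preserve $R\ge R_0$ by the maximum principle, and conclude monotonicity. Two points of comparison are worth recording.

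First, your description of where the dimension threshold enters is slightly off the paper's actual mechanism. You frame it as a weighted spectral estimate for the linearized operator $\Delta_L+2\lambda$. The paper instead separates the argument: unweighted $L^2$ strict stability (valid for all rank-one spaces except $\mathbb{H}^2$) yields long-time existence and unweighted exponential convergence, and then a weighted \emph{maximum principle} applied to $e^{2\lambda t}|e^{\tau d}\hat h|^2$ propagates the spatial decay. The latter step produces a zeroth-order coefficient $\tau^2-\tau_0\tau+2\gamma$, where $\gamma$ is the largest eigenvalue of the curvature operator $h_{ij}\mapsto R_{ikjl}h^{kl}$; requiring this to be negative for some $\tau>\tau_0$ forces $\tau_0^2>8\gamma$, which with the explicit values $(\tau_0,\gamma)$ from Table~\ref{symmetric spaces of non-compact type of rank 1} gives exactly $N=4,6,4,16$. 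So the restriction is a pointwise curvature-versus-growth-rate inequality rather than a spectral gap, though the two are of course related.

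Second, your rigidity argument is more complete than the paper's: the paper's proof of Theorem~\ref{main theorem} stops after verifying $V_{g_0}(g)\ge 0$ and the dimension condition, and never spells out the equality case. Your route (equality forces $R_{g(t)}\equiv R_0$, hence $g(t)$ Einstein, hence the flow is pure DeTurck reparametrization, and integrability of $|W|$ in $t$ gives a limiting diffeomorphism) is the natural one and is correct. One small caution: the Einstein conclusion comes from the evolution equation and so holds a priori only for $t>0$; you should pass to the limit $t\to 0^+$ (or equivalently argue via the NRF $\tilde g(t)=\Phi_t^*\hat g(t)$, which is then stationary for $t>0$ and hence equals $g$) rather than asserting it directly at $t=0$.
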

\begin{remark}
    If $M = \mathbb{H}^n$, then, \Cref{main theorem} is Theorem 1.7 in \cite{HJS2016}.
\end{remark}

This paper is organized as follows. In \Cref{sec: symmetric_space}, we review the foundational geometric framework of symmetric spaces including the definition of irreducible symmetric spaces of non-compact type, their root systems and key decomposition theorems. Additionally, we derive explicit expressions for curvature and Laplacian operators along geodesics, which are critical for later analysis. In \Cref{sec: ricci_flow}, we introduce the normalized Ricci flow \eqref{NRF} and the normalized Ricci-DeTurck flow \eqref{NRDF} and review the short-time existence results and stability results under some perturbation conditions. In \Cref{sec: longtime}, following the approach of \cite{HJS2016}, we analyze the asymptotic properties of the normalized Ricci-DeTurck flow on Cartan-Hadamard manifolds which include symmetric spaces of non-compact type. In \Cref{sec: relative_volume}, we derive the evolution equation for relative volume under the normalized Ricci-DeTurck flow \eqref{NRDF}, prove its monotonicity, and use this to establish non-negativity of the relative volume and the rigidity result in \Cref{main theorem}.

\section{Symmetric Spaces of Non-compact Type}\label{sec: symmetric_space}

In this section, we briefly review basic facts about symmetric spaces of non-compact type. We then express two key quantities 
\begin{align}\label{two important quantities}
    \gamma := \sup_{x \in M} \sup_{h \in T_x^*M \otimes T_x^*M \setminus \{0\}} \frac{R_{ijkl}(x) h^{ik} h^{jl}}{|h|^2}\;\;\; \text{and}\;\;\; \tau_0 := \lim_{d\rightarrow +\infty} \Delta_{g_0} d,
\end{align}
where $d(x) := \text{dist}(x, p_0)$ and $R_{ijkl}$ are components of the Riemann curvature tensor of $g_0$, in terms of the associated Lie algebraic structures of symmetric spaces (\Cref{laplacian distance function,curvature representation lemma}). And using the representation theory of semisimple Lie algebras, we are able to figure out the exact values of these two quantities. These constants will play a crucial role in the analysis of the normalized Ricci-DeTurck flow on symmetric spaces of non-compact type, which is carried out in the next section.

\subsection{Definitions and Basic Properties}
Let $(M^n, g_0)$ be a simply connected irreducible symmetric space of non-compact type and $p_0\in M$ be any fixed point in $M$. Let $G$ be the identity connected component of the isometry group of $M$ with its Lie algebra denoted as $\mathfrak{g}$ and $K$ be the subgroup of $G$ which leaves $p_0$ fixed with its Lie algebra denoted as $\mathfrak{l}$. Then, $G$ acts on $M$ transitively, which implies that $M\cong G / K$. The reflection of $M$ at $p_0$, denoted as $\Phi_{p_0}$, can induce an involution $\xi$ (i.e. $\xi\circ\xi = id$) on $G$ by  
\begin{equation}
    \begin{split}
        \xi: G &\rightarrow G, \\
             g &\mapsto \Phi_{p_0}g \Phi_{p_0}.
    \end{split}
\end{equation}
And the tangent map of this $\xi$ at the identity, denoted by $d\xi : \mathfrak{g} \rightarrow \mathfrak{g}$, is an involution as well. Hence, $\mathfrak{g}$ admits the eigenspace decomposition  
\begin{equation}
    \mathfrak{g} = \mathfrak{p}\oplus\mathfrak{l},
\end{equation}
where $\mathfrak{p}$ and $\mathfrak{l}$ are the eigenspaces for the eigenvalues -1 and 1 of the $d\xi$ respectively. 
\begin{remark}
    $\mathfrak{l}$ here agrees with the Lie algebra of $K$. Thus, we just use the same notation to denote them. In addition, throughout this paper, the Lie algebra of a Lie group is identified with the space of left-invariant vector fields of that Lie group, equipped with the corresponding Lie bracket on vector fields.
\end{remark}
One can easily verify that $[\mathfrak{p}, \mathfrak{p}] \subset \mathfrak{l}$, $[\mathfrak{p}, \mathfrak{l}] \subset \mathfrak{p}$ and $[\mathfrak{l}, \mathfrak{l}] \subset \mathfrak{l}$. 
Define 
\begin{equation}
    \begin{split}
        \pi: G &\rightarrow M, \\
             h &\mapsto h(p_0),
    \end{split}
\end{equation}
and denote $d\pi_e$ as the tangent map of $\pi$ at the identity element of $G$. By the transitivity of the action of $G$ on $M$, $\pi$ is surjective and so is $d\pi_{e}$. Moreover, it is clear that $\mathrm{ker}(d\pi_{e}) = \mathfrak{l}$, which implies that $\mathfrak{p} \cong T_{p_0} M$. In addition, there is a canonical Lie algebra isomorphism between the Lie algebra $\mathfrak{g}$ and the space of Killing field $K(M)$, 
\begin{equation}
    \begin{split}
        \sigma: \mathfrak{g} &\rightarrow K(M), \\
                           x &\mapsto \hat{X},
    \end{split}
\end{equation}
where $\hat{X}(p) := \frac{d}{dt} \exp(x t) (p)|_{t = 0}$ for any $p\in M$, and $\exp(x t)\in G$ is the exponential map for the Lie group $G$ with the parameter $t\in \mathbb{R}$. It is worth noting that  
\begin{align}\label{relation between lie algebra and kill field}
    \sigma([x, y]) = -[\sigma(x), \sigma(y)], 
\end{align}
since the Lie bracket on $\mathfrak{g}$ is defined as the Lie bracket for the corresponding left invariant vector fields, and the curve $\gamma(t): =\exp(xt)(p)$ is a geodesic starting at $p$, in the direction $\sigma(x)|_{p}$.

On the Lie algebra $\mathfrak{g}$, we denote the Killing form as $\langle \cdot , \cdot \rangle$ which is defined as following 
\begin{equation}
    \langle x , y \rangle := \mathrm{Tr} \left( ad(x) \circ ad(y) \right),
\end{equation}
where 
\begin{equation}
    \begin{split}
        ad(x): \mathfrak{g} &\rightarrow \mathfrak{g}, \\
               z &\mapsto [x, z],
    \end{split}
\end{equation}
for $x, y, z\in \mathfrak{g}$. The Killing form satisfies 
\begin{equation}\label{compactibility between killing form and lie brack}
    \langle [x,y], z\rangle = -\langle y, [x,z]\rangle, 
\end{equation}
which implies $\langle \mathfrak{p}, \mathfrak{l}\rangle = 0$. Moreover, it is positive definite on $\mathfrak{p}$ and negative definite on $\mathfrak{l}$. Since $(M, g_0)$ is an irreducible symmetric space, then, by Schur's lemma, there exists a constant $\lambda>0$ such that
\begin{align}\label{metric = Killing form}
g_0\bigl(\sigma(x), \sigma(y)\bigr)\big|_{p_0} = \lambda \langle x, y\rangle,
\end{align}
for all $x,y \in \mathfrak{p}$. For different choices of $\lambda>0$, the associated covariant derivatives coincide. We therefore fix a convenient value of $\lambda$ for computational simplicity in what follows. Moreover, the covariant derivatives and curvature tensors of a symmetric space can be expressed in terms of the underlying Lie algebraic structure,
\begin{equation}\label{sectional curvature}
    \begin{split}
        (\nabla_{\hat{X}} Y)|_{p_0} =& [\hat{X}, Y]|_{p_0},\\
        R(\hat{X}, \hat{Y}) \hat{Z}|_{p_0} =& -[[\hat{X}, \hat{Y}], \hat{Z}]|_{p_0} = -\sigma([[x, y], z])|_{p_0},
    \end{split}
\end{equation}
where $\hat{X}$, $\hat{Y}$, $\hat{Z}$ are equal to $\sigma(x),\sigma(y),\sigma(z)$ respectively, for any $x, y, z \in \mathfrak{p}$, $Y$ is an arbitrary vector field on $M$, and $R(\hat{X}, \hat{Y})\hat{Z} := -\nabla_{\hat{X}}\nabla_{\hat{Y}}\hat{Z} + \nabla_{\hat{Y}}\nabla_{\hat{X}}\hat{Z} + \nabla_{[\hat{X}, \hat{Y}]}\hat{Z}$ (refer to Theorems~3.3 and~4.2 in~\cite{He1978} or Section 3.4 in \cite{Ba2015} for details).

\subsection{Geometric Structures}
Next, we introduce the root system, which allows us to compute the two quantities defined in \eqref{two important quantities}. Readers may refer to \cite{He1978} for more details.

\(\mathfrak{a} (\subset \mathfrak{p})\) is called a \textbf{maximal abelian subalgebra} if it is not contained in any larger abelian subalgebra of \(\mathfrak{p}\). The dimension of $\mathfrak{a}$ is called the \textbf{rank of the corresponding symmetric space} (denoted by $r$). Let $\mathfrak{a}^*$ be the dual space of $\mathfrak{a}$. We say that $a\in\mathfrak{a}^*$ is a \textbf{root} of $\mathfrak{g}$ relative to $\mathfrak{a}$ if \(a \neq 0\) and there exists \(x \neq 0 \in \mathfrak{g}\) such that \([v, x]=a(v) x\) for all $v\in\mathfrak{a}$. Denote by $\Delta$ the set of all roots, which is called the \textbf{root system}. Denote by $\mathfrak{g}_{a}$ the corresponding eigenspace of $a$. Let $v_{0} \in \mathfrak{a}$ be such that $a(v_{0})\neq0$ for all nonzero $a\in\Delta$, and define the set of \textbf{positive roots} by $\Delta_{+}=\{a\in\Delta: a(v_{0})>0\}$.

\begin{remark}
    The root system considered here is not the usual root system of a complex semisimple Lie algebra. It is sometimes called the \emph{real} root system associated with $\mathfrak{p}$; it corresponds to symmetric spaces of non-compact type and can be classified by Satake diagrams (see Chapter~4.3 in~\cite{OV1994}).
\end{remark}

The existence of the involution $d\xi$ implies $-\Delta=\Delta$ and $d\xi$ maps $\mathfrak{g}_{a}$ to $\mathfrak{g}_{-a}$. Therefore, if we set  $\mathfrak{p}_{a} := (\mathfrak{g}_{a} \oplus \mathfrak{g}_{-a}) \cap \mathfrak{p}$ and $\mathfrak{l}_{a} := (\mathfrak{g}_{a} \oplus \mathfrak{g}_{-a}) \cap \mathfrak{l}$.  Then, we have the following root space decomposition
\begin{align*}
	\mathfrak{g}=
    &\mathfrak{a}\oplus_{a\in\Delta_{+}}(\mathfrak{g}_{a}\oplus\mathfrak{g}_{-a})\oplus\mathfrak{l}_{0} \\
	=&\mathfrak{p}\oplus\mathfrak{l}\\
    =&\mathfrak{a}\oplus_{a\in\Delta_{+}}(\mathfrak{p}_{a}\oplus\mathfrak{l}_{a})\oplus\mathfrak{l}_{0}. 
\end{align*}
These splittings are orthogonal with respect to the Killing form. In addition, the subspace $\mathfrak{l}_{0}$ is a subalgebra. Using the Jacobi identity, we conclude that for any $a,b\in\Delta$, $[\mathfrak{g}_{a},\mathfrak{g}_{b}]\subset\mathfrak{g}_{a+b}$. Hence, $\mathfrak{n}_{+}:=\oplus_{a\in\Delta_{+}}\mathfrak{g}_{a}$ and $\mathfrak{n}_{-}:=\oplus_{a\in\Delta_{+}}\mathfrak{g}_{-a}$ are nilpotent Lie algebras, and $d\xi(\mathfrak{n}_{+})=\mathfrak{n}_{-}$. Define  
\begin{align}\label{inner product on lie algebra}
	(\cdot, \cdot): = -\lambda \langle \cdot, d\xi(\cdot) \rangle,
\end{align}
which is an inner product on $\mathfrak{g}$. Moreover, by \eqref{metric = Killing form}, it is straightforward to verify that
\begin{align*}
    g_0(\sigma(x), \sigma(y))|_{p_0} = (x, y),\quad \text{for any }\; x, y\in \mathfrak{p}. 
\end{align*}

Next, we will introduce an orthonormal frame on a symmetric space of non-compact type, on which we will carry out the computation for the two quantities defined in \eqref{two important quantities}. Let $a_{1},\cdots, a_{n-r}$ be the roots of $\Delta_{+}$ occurring with the appropriate multiplicities and $x_{1},\cdots,x_{n-r}$ be an orthonormal basis of $\mathfrak{n}_{+}$ with respect to $(\cdot, \cdot)$ such that $x_{i} \in \mathfrak{g}_{a_{i}}$ and let $y_i := d\xi(x_i)$. Then, 
\begin{equation}\label{basic innter product relation}
\lambda\left\langle x_i, y_j\right\rangle=-\delta_{i j}, \quad\left\langle x_i, x_j\right\rangle=0,\quad\text{ for  }\; 1\leq i, j \leq n -r . 
\end{equation}
We set
\begin{equation}
    p_{i}=\frac{1}{\sqrt{2}}(x_{i}-y_{i}),\quad k_{i}=\frac{1}{\sqrt{2}}(x_{i} +y_{i}). 
\end{equation}
By \eqref{basic innter product relation}, it is straightforward to check that $\{p_i\}_{i=1}^{n-r}$ and $\{k_i\}_{i=1}^{n-r}$ form orthonormal bases of $\mathfrak{a}^{\perp}\subset\mathfrak{p}$ and $\mathfrak{l}_0^{\perp}\subset\mathfrak{l}$, respectively, with respect to $(\cdot,\cdot)$. Moreover, for any $v\in \mathfrak{a}$,
\begin{equation}\label{basic frame relation}
    \begin{split}
        [p_i,v] =& \frac{1}{\sqrt{2}}([x_i,v]-[y_i,v]) = -a_i(v) k_i,\\
        [k_i,v] =& \frac{1}{\sqrt{2}}([x_i,v]+[y_i,v]) = -a_i(v) p_i,
    \end{split}
\end{equation}
and 
\begin{equation}\label{sinh existence}
    \begin{split}
        \exp(ad(v))(p_i) =& \mathrm{sh}(a_i(v))k_i + \mathrm{ch}(a_i(v)) p_i,\\
        \exp(ad(v))(k_i) =& \mathrm{sh}(a_i(v))p_i + \mathrm{ch}(a_i(v)) k_i,
    \end{split}
\end{equation}
where $ad(v)(x) := [v, x]$ and 
\begin{equation}\label{adjoint action}
    \exp(ad(v))(x) := \sum_{n = 0}^{+\infty}\frac{1}{n!}(ad(v))^n(x),
\end{equation}
for any $x\in \mathfrak{g}$. Note that 
\begin{equation*}
    d\xi([x_{i}, y_{i}]) = [y_{i}, x_{i}] = -[x_{i}, y_{i}],
\end{equation*}
so $[x_i,y_i]\in\mathfrak{p}$. Moreover, for any $v\in\mathfrak{a}$,
\begin{align*}
    [[x_i, y_i], v] = [[x_i, v], y_i] + [x_i, [y_i,v]] = (a_i(v) - a_i(v))[x_i, y_i] = 0, 
\end{align*}
which implies that $[x_{i},y_{i}]\in\mathfrak{a}$ (since $\mathfrak{a}$ is a maximal abelian subalgebra in $\mathfrak{p}$). Furthermore, by \eqref{compactibility between killing form and lie brack}, \eqref{inner product on lie algebra} and \eqref{basic innter product relation}, for any $v \in \mathfrak{a}$, 
\begin{equation}
	([x_i, y_i], v) = -\lambda\langle [x_{i}, y_{i}], d\xi(v) \rangle = -\lambda\langle [x_{i}, v], y_{i} \rangle = a_{i}(v) \lambda\langle x_{i}, y_{i} \rangle = -a_{i}(v), 
\end{equation}
and hence
\begin{equation}
	[x_{i}, y_{i}] = -a_{i}^{\#} = -\sum_{k = 1}^{r} a_i (v_k) v_k, 
\end{equation}
and 
\begin{equation}\label{basic frame relation 1}
    [p_i, k_i] = \frac{1}{2}[x_i - y_i, x_i + y_i] = [x_i,y_i] = -\sum_{k = 1}^{r}a_i(v_k)v_k,
\end{equation}
where $v_{1},\dots,v_{r}$ is an orthonormal basis of $\mathfrak{a}$ with respect to $(\cdot,\cdot)$. Now, $v_1, \cdots, v_{r}$, $p_1,\cdots, p_{n - r}$, and $k_1, \cdots, k_{n - r}$ defined above provide a convenient frame for $(M, g_0)$, with which we can write the corresponding covariant derivatives along the geodesic ray $\exp(vt)(p_0)$ for any $t > 0$ and $v \in \mathfrak{a}$. 

\begin{lemma}\label{spherical frame}
    Let $(M^n, g_0)$ be a simply connected irreducible symmetric space of non-compact type of rank $r$. Then, under the above notation, for any $v\in \mathfrak{a}$, we have that 
    \begin{equation}\label{orthonormal basis on radius}
        \begin{split}
            &g_0(\sigma(v_i), \sigma(k_j))|_{\exp(v t)(p_0)} = 0,\\
            &g_0(\sigma(v_{i_1}),\sigma(v_{i_2}))|_{\exp(vt)(p_0)} = \delta_{i_1i_2},\\
            &g_0(\sigma(k_{j_1}), \sigma(k_{j_2}))|_{\exp(vt)(p_0)} = \mathrm{sh}^2(a_{j_1}(v)t)\delta_{j_1j_2},
        \end{split}
    \end{equation}
    for any $1\leq i, i_1, i_2 \leq r$ and $1 \leq j, j_1, j_2 \leq n - r$. Moreover, we have that 
    \begin{itemize}
        \item for $1 \leq i \leq r$,
        \begin{equation}\label{covariant derivative on radius 1}
            \nabla_{\sigma(v_i)}\sigma(v_i)|_{\exp(vt)(p_0)} = 0,
        \end{equation}
        \item for $1 \leq j \leq n - r$,
        \begin{equation}\label{covariant derivative on radius 2}
            \nabla_{\sigma(k_j)}\sigma(k_j)|_{\exp(vt)(p_0)}\\ 
            = \mathrm{sh}(-a_j(v)t)\cdot \mathrm{ch}(-a_j(v)t)\cdot \sum_{l = 1}^{r}a_j(v_l)\sigma(v_l)|_{\exp(vt)(p_0)}. 
        \end{equation}
    \end{itemize}
\end{lemma}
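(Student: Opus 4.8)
The plan is to exploit the homogeneity of $M$: along the geodesic $\gamma(t):=\exp(vt)(p_0)$, every point $q:=\gamma(t)$ is the image of $p_0$ under the isometry $\phi_t:=\exp(vt)\in G$, so the Lie‑theoretic formulas \eqref{sectional curvature} and $g_0(\sigma(x),\sigma(y))|_{p_0}=(x,y)$ (valid for $x,y\in\mathfrak{p}$) may be pulled back from $q$ to $p_0$. The facts about $\phi_t$ I will use are $(\phi_t)_*\sigma(x)=\sigma(\mathrm{Ad}(\phi_t)(x))$ for $x\in\mathfrak{g}$, and $\mathrm{Ad}(\phi_t)=\exp(t\,\mathrm{ad}(v))$, whose action on the distinguished basis is given by \eqref{sinh existence} together with $\mathrm{Ad}(\phi_t)(v_i)=v_i$ (which holds because $[v,v_i]=0$). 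I will also use that $\sigma(z)|_{p_0}$ depends only on the $\mathfrak{p}$-component of $z$ (as $\ker d\pi_e=\mathfrak{l}$), and, transporting this to $q$, that $\sigma(z)|_q$ depends only on the component of $z$ in $\mathfrak{p}_t:=\mathrm{Ad}(\phi_t)(\mathfrak{p})$, the $(-1)$-eigenspace of the involution attached to the reflection at $q$.

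First I would establish \eqref{orthonormal basis on radius}. Since $\phi_t$ is an isometry, $g_0(\sigma(x),\sigma(y))|_q=g_0\big(\sigma(\mathrm{Ad}(\phi_{-t})(x)),\sigma(\mathrm{Ad}(\phi_{-t})(y))\big)|_{p_0}=\big([\mathrm{Ad}(\phi_{-t})(x)]_{\mathfrak{p}},[\mathrm{Ad}(\phi_{-t})(y)]_{\mathfrak{p}}\big)$. For $x,y\in\{v_i\}$ the vectors are unchanged, whereas by \eqref{sinh existence} the $\mathfrak{p}$-component of $\mathrm{Ad}(\phi_{-t})(k_j)=-sh(a_j(v)t)\,p_j+ch(a_j(v)t)\,k_j$ is $-sh(a_j(v)t)\,p_j$; the three identities then follow from the orthonormality of $\{v_1,\dots,v_r\}\cup\{p_1,\dots,p_{n-r}\}$ in $\mathfrak{p}$ with respect to $(\cdot,\cdot)$.

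For the covariant derivatives I would transport \eqref{sectional curvature} to $q$: applying $\phi_t$ to $(\nabla_{\sigma(x)}Y)|_{p_0}=[\sigma(x),Y]|_{p_0}$ and using naturality of the Levi‑Civita connection and of the vector‑field bracket, one gets $(\nabla_{\sigma(w)}Y)|_q=[\sigma(w),Y]|_q$ for all $w\in\mathfrak{p}_t$ and all vector fields $Y$. Since $v_i\in\mathfrak{p}_t$, this gives $\nabla_{\sigma(v_i)}\sigma(v_i)|_q=\sigma([v_i,v_i])|_q=0$, which is \eqref{covariant derivative on radius 1}. For \eqref{covariant derivative on radius 2}, set $z_j:=-sh(a_j(v)t)\,\mathrm{Ad}(\phi_t)(p_j)\in\mathfrak{p}_t$; then $\sigma(k_j)|_q=\sigma(z_j)|_q$, because $k_j-z_j$ lies in $\mathrm{Ad}(\phi_t)(\mathfrak{l})$ (apply $\mathrm{Ad}(\phi_{-t})$ and use \eqref{sinh existence} to see the difference equals $ch(a_j(v)t)\,k_j\in\mathfrak{l}$). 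As $\nabla_XY$ depends on $X$ only through its value at the point, $\nabla_{\sigma(k_j)}\sigma(k_j)|_q=\nabla_{\sigma(z_j)}\sigma(k_j)|_q=[\sigma(z_j),\sigma(k_j)]|_q=\sigma([z_j,k_j])|_q$. Using $\mathrm{Ad}(\phi_t)(p_j)=sh(a_j(v)t)\,k_j+ch(a_j(v)t)\,p_j$, $[k_j,k_j]=0$, and $[p_j,k_j]=\sum_l a_j(v_l)v_l$, one finds $[z_j,k_j]=-sh(a_j(v)t)ch(a_j(v)t)\sum_l a_j(v_l)v_l$; applying $\sigma$ and writing $-sh(s)ch(s)=sh(-s)ch(-s)$ yields \eqref{covariant derivative on radius 2}. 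As a check, the $\sigma(v_i)$-component of the answer also comes straight from the Killing identity $g_0(\nabla_{\sigma(k_j)}\sigma(k_j),\sigma(v_i))|_q=-\tfrac12\,\sigma(v_i)\big(|\sigma(k_j)|_{g_0}^2\big)|_q$ together with the metric formula just proved, the integral curve of $\sigma(v_i)$ through $q$ being $s\mapsto\exp(vt+sv_i)(p_0)$ since $\mathfrak{a}$ is abelian.

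The step I expect to be the main obstacle is justifying that \eqref{sectional curvature} transports to $q$ with $\mathfrak{p}$ replaced by $\mathfrak{p}_t$: one must verify that conjugation by $\phi_t$ intertwines the Cartan involution at $p_0$ with the one at $q$, so that $\mathrm{Ad}(\phi_t)(\mathfrak{p})$ is indeed the relevant $(-1)$-eigenspace and the structural formulas of Section 2 hold at $q$ relative to the decomposition $\mathfrak{g}=\mathfrak{p}_t\oplus\mathrm{Ad}(\phi_t)(\mathfrak{l})$. Everything past that is the bracket computation above, performed in the conventions fixed in this section (in particular the homomorphism property of $\sigma$ used in passing from $[\sigma(z_j),\sigma(k_j)]$ to $\sigma([z_j,k_j])$).
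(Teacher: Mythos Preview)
Your proposal is correct and follows essentially the same strategy as the paper: exploit the isometry $\phi_t=\exp(vt)$ together with the formula $(\nabla_{\sigma(x)}Y)|_{p_0}=[\sigma(x),Y]|_{p_0}$ for $x\in\mathfrak p$ and the adjoint--action identities \eqref{sinh existence}. The only difference is one of framing: the paper pulls the integral curve $s\mapsto\exp(k_js)\exp(vt)(p_0)$ back to $p_0$ via $\exp(-vt)_*$, identifies the pushed--forward tangent field as the Killing field $\sigma(\mathrm{Ad}(\phi_{-t})(k_j))$, and applies \eqref{sectional curvature} at $p_0$; you instead transport the formula itself to $q$ (replacing $\mathfrak p$ by $\mathfrak p_t=\mathrm{Ad}(\phi_t)(\mathfrak p)$) and compute the bracket directly there. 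These are dual viewpoints of the same computation, and the ``obstacle'' you flag---that conjugation by $\phi_t$ intertwines the two Cartan decompositions---is exactly what the paper handles implicitly by its explicit pushforward/pullback. Your Killing--field check at the end is a pleasant extra that the paper does not include.
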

\begin{proof}
    For \eqref{orthonormal basis on radius}, since $\exp(-v t)$ is in the isometry group of $M$ for any $v\in \mathfrak{a}$, it suffices to show that 
    \begin{align*}
        &g_0(\exp(-vt)_*(\sigma(v_i)|_{\exp(vt)(p_0)}),\exp(-vt)_*(\sigma(k_j)|_{\exp(vt)(p_0)})) = 0,\\
        &g_0(\exp(-vt)_*(\sigma(v_{i_1})|_{\exp(vt)(p_0)}),\exp(-vt)_*(\sigma(v_{i_2})|_{\exp(vt)(p_0)})) = \delta_{i_1i_2},\\
        &g_0(\exp(-vt)_*(\sigma(k_{j_1})|_{\exp(vt)(p_0)}),\exp(-vt)_*(\sigma(k_{j_2})|_{\exp(vt)(p_0)})) = \mathrm{sh}^2(a_{j_1}(v)t)\delta_{j_1j_2}, 
    \end{align*}
    for $1 \leq i, i_1, i_2 \leq r$ and $1 \leq j, j_1, j_2 \leq n - r$. In fact, for $1 \leq i \leq r$, 
    \begin{align*}
        \exp(-vt)_*(\sigma(v_i)|_{\exp(vt)(p_0)}) 
        =& \exp(-vt)_*(\frac{d}{ds} \bigg|_{s = 0} \exp(v_is)\cdot\exp(vt)(p_0))\\
        =& \frac{d}{ds} \bigg|_{s = 0} \exp(-vt)\cdot\exp(v_is)\cdot \exp(vt)(p_0)\\
        =& \frac{d}{ds} \bigg|_{s = 0} \exp([\exp(ad(-vt))v_i]s)\\
        =& \sigma(\exp(ad(-vt))v_i)|_{p_0}\\
        =& \sigma(v_i)|_{p_0},
    \end{align*}
    and for $1\leq j \leq n - r$, by \eqref{sinh existence}, we have that 
    \begin{align*}
        \exp(-vt)_*(\sigma(k_j)|_{\exp(vt)(p_0)}) 
        =& \exp(-vt)_*(\frac{d}{ds}  \bigg|_{s = 0}  \exp(k_js) \cdot \exp(vt)(p_0))\\
        =& \frac{d}{ds} \bigg|_{s = 0} \exp(-vt)\cdot\exp(k_js)\cdot\exp(vt)(p_0)\\
        =& \frac{d}{ds} \bigg|_{s = 0} \exp([\exp(ad(-vt))(k_i)]s)(p_0)\\
        =& \frac{d}{ds} \bigg|_{s = 0} \exp([\mathrm{sh}(-a_i(v)t)p_i + \mathrm{ch}(-a_i(v)t)k_i]s)(p_0)\\
        =& \sigma(\mathrm{sh}(-a_i(v)t)p_i + \mathrm{ch}(-a_i(v)t)k_i)|_{p_0}\\
        =& \mathrm{sh}(-a_i(v)t)\sigma(p_i)|_{p_0}. 
    \end{align*}
    Then, together with \eqref{metric = Killing form} and \eqref{inner product on lie algebra}, \eqref{orthonormal basis on radius}) follows. 

    For \eqref{covariant derivative on radius 1} and \eqref{covariant derivative on radius 2}, let $1 \leq j \leq n - r$. The integral curve $\gamma_j(s)$ of the Killing field $\sigma(k_j)$ passing through $\exp(vt)(p_0)$ is $\gamma_j(s) := \exp(k_js)\cdot \exp(vt)(p_0)$. Then, $\sigma(k_j)|_{\gamma_j(s)} = \dot\gamma_j(s)$. Thus, 
    \begin{align*}
        \nabla_{\sigma(k_j)}\sigma(k_j)|_{\exp(vt)(p_0)} = \nabla_{\dot\gamma_j(s)}\dot\gamma_j(s)|_{s = 0}.
    \end{align*}
    Then, since $\exp(-vt)$ is an isometry with its inverse $\exp(vt)$, we have that 
    \begin{align*}
        \nabla_{\dot\gamma_j(s)}\dot\gamma_j(s)|_{s = 0} = \exp(vt)_* (\nabla_{\exp(-vt)_*(\dot\gamma_j(s))}\exp(-vt)_*(\dot\gamma_j(s))|_{p_0}).
    \end{align*}
    On the other hand, by \eqref{sinh existence},
    \begin{align*}
        \exp(-vt)_*(\dot\gamma_j(s)) 
        =& \frac{d}{ds}\exp(-vt)(\gamma_j(s))\\
        =& \frac{d}{ds}\exp(-vt)\exp(k_j s)\exp(vt)(p_0)\\
        =& \frac{d}{ds} \exp([\exp(ad(-vt))(k_j)]s)(p_0)\\
        =& \frac{d}{ds} \exp([\mathrm{sh}(-a_j(v)t)p_j + \mathrm{ch}(-a_j(v)t)k_j]s)(p_0)\\
        =& \sigma(\mathrm{sh}(-a_j(v)t)p_j + \mathrm{ch}(-a_j(v)t)k_j)|_{exp(-vt)(\gamma_j(s))},
    \end{align*}
    and at $p_0 \in M$, 
    \begin{align*}
        \exp(-vt)_*(\dot\gamma_j(s))|_{s = 0} = \sigma(\mathrm{sh}(-a_j(v)t)p_j + \mathrm{ch}(-a_j(v)t)k_j)|_{p_0} = \mathrm{sh}(-a_j(v)t)\cdot\sigma(p_j)|_{p_0}.
    \end{align*}
    Therefore, by \eqref{relation between lie algebra and kill field}, \eqref{sectional curvature} and \eqref{basic frame relation 1}, we have  
    \begin{align*}
        &\nabla_{\exp(-vt)_*(\dot\gamma_j(s))}\exp(-vt)_*(\dot\gamma_j(s))|_{p_0}\\
        =& \mathrm{sh}(-a_j(v)t)\cdot\nabla_{\sigma(p_j)}\sigma(\mathrm{sh}(-a_j(v)t)p_j + \mathrm{ch}(-a_j(v)t)k_j)|_{p_0}\\
        =& \mathrm{sh}(-a_j(v)t)\cdot\Big[\sigma(p_j), \mathrm{sh}(-a_j(v)t)\sigma(p_j) + \mathrm{ch}(-a_j(v)t)\sigma(k_j)\Big]\Big|_{p_0}\\
        =& -\mathrm{sh}(-a_j(v)t)\cdot \mathrm{ch}(-a_j(v)t)\cdot\sigma([p_j, k_j])|_{p_0}\\
        =& \mathrm{sh}(-a_j(v)t)\cdot \mathrm{ch}(-a_j(v)t)\cdot \sum_{k = 0}^{r}a_j(v_k)\sigma(v_k)|_{p_0}. 
    \end{align*}
    Thereby, 
    \begin{align*}
        &\exp(vt)_*(\nabla_{\exp(-vt)_*(\dot\gamma_j(s))}\exp(-vt)_*(\dot\gamma_j(s))|_{p_0})\\
        =& \mathrm{sh}(-a_j(v)t)\cdot \mathrm{ch}(-a_j(v)t)\cdot \sum_{k = 0}^{r}a_j(v_k)\sigma(v_k)|_{\exp(vt)(p_0)}.
    \end{align*}
    The last step is followed by 
    \begin{align*}
        \exp(vt)_*(\sigma(v_k)|_{p_0}) 
        =& \frac{d}{ds} \bigg|_{s = 0} \exp(vt)\cdot \exp(v_ks)(p_0)\\
        =& \frac{d}{ds} \bigg|_{s = 0} \exp(vt)\cdot \exp(v_ks)\cdot \exp(-vt)\cdot \exp(vt)(p_0)\\
        =& \frac{d}{ds} \bigg|_{s = 0} \exp([\exp(ad(vt))v_k]s)(\exp(vt)(p_0))\\
        =& \frac{d}{ds} \bigg|_{s = 0} \exp(v_ks)(\exp(vt)(p_0))\\
        =& \sigma(v_k)|_{\exp(vt)(p_0)}.
    \end{align*}
    By the similar steps, we can get that for $1 \leq i \leq r$, 
    \begin{align*}
        \nabla_{\sigma(v_i)}\sigma(v_i)|_{\exp(vt)(p_0)} = 0.
    \end{align*}
\end{proof}

\subsection{Curvature and Laplacian Operators}
From \Cref{spherical frame}, we see that 
\begin{align*}
    &\bigg\{ \sigma(v_1), \cdots, \sigma(v_r), \frac{1}{\mathrm{sh}(a_1(v) t)}\sigma(k_1), \cdots, \frac{1}{\mathrm{sh}(a_{n - r} (v)t)}\sigma(k_{n - r})\bigg\}\bigg|_{\exp(vt)(p_0)}
\end{align*}
forms an orthonormal basis of $T_{\exp(v t)(p_0)}M$. Under this basis, we are able to write down the Ricci curvature and $\Delta_{g_0} d$ along the geodesic radius, $\exp(v t)(p_0)$, with respect to the root systems.
\begin{lemma}\label{laplacian distance function}
    Let $(M^n, g_0)$ be a simply connected irreducible symmetric space of non-compact type of rank $r$. Then, 
    \begin{equation}\label{laplace distance function ricci}
        \mathrm{Ric} = -\sum_{i = 1}^{n - r} (a_{i}(v))^2 g_0,
    \end{equation}
    and 
    \begin{equation}\label{laplace distance function itself}
        \Delta_{g_0} d|_{\exp(v t)(p_0)} = -\sum_{i = 1}^{n - r}\sum_{l = 1}^{r}\frac{\mathrm{ch}(-a_i(v)t)}{\mathrm{sh}(-a_i(v)t)}a_i(v_{l})\cdot (v_l, v),
    \end{equation}
    for any $v\in \mathfrak{a}$ with $(v, v) = 1$. 
\end{lemma}
\begin{proof}
For \eqref{laplace distance function ricci}, since simply connected irreducible symmetric spaces of non-compact type are all Einstein and $(v, v) = g_0(\sigma(v), \sigma(v))|_{p_0}$, it suffices to show that 
\begin{align*}
    \mathrm{Ric}(\sigma(v), \sigma(v)) = - \sum_{i = 1}^{n - r} (a_i(v))^2.
\end{align*}
By \eqref{sectional curvature}, we have that 
\begin{align*}
    \mathrm{Ric}(\sigma(v),\sigma(v))|_{p_0} =& -\sum_{i = 1}^{r}g_0(\sigma([[v,v_i], v]),\sigma(v_i))|_{p_0} - \sum_{i=1}^{n - r}g_0(\sigma([[v,p_i],v]), \sigma(p_i))|_{p_0}.
\end{align*}
By \eqref{metric = Killing form} and \eqref{inner product on lie algebra}, we have that 
\begin{align*}
    \mathrm{Ric}(\sigma(v),\sigma(v))|_{p_0} 
    =& -\sum_{i = 1}^{r}\lambda\langle[[v,v_i], v],v_i\rangle - \sum_{i=1}^{n - r}\lambda \langle [[v,p_i],v], p_i \rangle \\
    =& \sum_{i = 1}^{r}\lambda\langle[v_i,v],[v_i,v]\rangle+\sum_{i = 1}^{n - r}\lambda \langle [p_i,v],[p_i,v] \rangle \\
    =& \sum_{i=1}^{n-r}(a_i(v))^2\lambda \langle k_i, k_i \rangle = -\sum_{i=1}^{n-r}(a_i(v))^2\lambda ( k_i, k_i) \\ 
    =& -\sum_{i = 1}^{n - r} (a_i(v))^2.
\end{align*}
For \eqref{laplace distance function itself}, by \eqref{metric = Killing form} \eqref{inner product on lie algebra} and \Cref{spherical frame},
\begin{align*}
    \Delta_{g_0} d|_{\exp(vt)(p_0)} =& \sum_{i = 1}^{r} \sigma(v_i)\sigma(v_i)(d) + \sum_{i = 1}^{n - r}\frac{1}{\mathrm{sh}^2(a_i(v)t)}\sigma(k_{i})\sigma(k_{i})(d) - \sum_{i = 1}^{r}\nabla_{\sigma(v_i)}\sigma(v_i)(d)\\
    &- \sum_{i =1}^{n - r}\frac{1}{\mathrm{sh}^2(a_i(v)t)}\nabla_{\sigma(k_i)}\sigma(k_i)(d)|_{\exp(vt)(p_0)}\\
    =& \sum_{i = 1}^{n - r}-\frac{1}{\mathrm{sh}^2(a_i(v)t)}\nabla_{\sigma(k_i)}\sigma(k_i)(d)|_{\exp(vt)(p_0)}\\
    =& -\sum_{i = 1}^{n - r}\sum_{l = 1}^{r}\frac{\mathrm{ch}(-a_i(v)t)}{\mathrm{sh}(-a_i(v)t)}a_i(v_{l})\sigma(v_{l})(d)|_{\exp(vt)(p_0)}\\
    =& -\sum_{i = 1}^{n - r}\sum_{l = 1}^{r}\frac{\mathrm{ch}(-a_i(v)t)}{\mathrm{sh}(-a_i(v)t)}a_i(v_{l})\cdot g_0(\sigma(v_l), \sigma(v))|_{\exp(vt)(p_0)}\\
    =& -\sum_{i = 1}^{n - r}\sum_{l = 1}^{r}\frac{\mathrm{ch}(-a_i(v)t)}{\mathrm{sh}(-a_i(v)t)}a_i(v_{l})\cdot (v_l, v).
\end{align*} 
\end{proof}

From the above lemma, we see that for the rank 1 case, taking $v = v_1$, by the invariant of $\Delta_{g_0} d$ under the action of $K$ on $M$, we have 
\begin{equation}
    \lim_{d\rightarrow+\infty} \Delta_{g_0} d = \lim_{t \rightarrow + \infty} \Delta_{g_0} d|_{\exp(v_1 t)(p_0)} = \sum_{i = 1}^{n - 1} a_i(v_1),
\end{equation}
for the higher rank cases, we have that 
\begin{equation}
    \overline{\lim_{d\rightarrow +\infty}} \Delta_{g_0} d =  \overline{\lim_{t\rightarrow +\infty}} \Delta_{g_0} d|_{\exp(v t)(p_0)} = \sup_{v\in\mathfrak{a}, |v| = 1} \sum_{i = 1}^{n - r} \sum_{l = 1}^{r} a_i(v) \cdot \lambda \cdot (v_l, v).
\end{equation}

We can now state a criterion for the existence of the relative volume defined in \eqref{definition of relative volume} on simply connected symmetric spaces of non-compact type. Indeed, we will establish this existence result for Cartan-Hadamard manifolds that include such symmetric spaces as a special case.
\begin{lemma}\label{existence of relative volume}
    Let $(M^n, g_0)$ be a Cartan-Hadamard manifold. Let $g$ be another metric on $M$. Then, the relative volume defined in \eqref{definition of relative volume} exists and does not rely on the choice of compact exhaustion, if 
    \begin{equation}
        \|e^{\tau d(\cdot)}(g(\cdot) - g_0(\cdot))\|_{C^0} < +\infty,
    \end{equation}
    for some $\displaystyle \tau > \overline{\lim}_{d\rightarrow+\infty} \Delta_{g_0} d$.
\end{lemma}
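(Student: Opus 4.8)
The plan is to reduce the existence and exhaustion-independence of $V_{g_0}(g)$ to the single statement that the difference of the two volume densities is globally integrable over $M$. Writing $d\mu_g=\sqrt{\det(g_0^{-1}g)}\,d\mu_{g_0}$, where $\det(g_0^{-1}g)$ is the determinant of the endomorphism $g_0^{-1}g$ of $TM$, one has for every compact $\Omega_i$
\[
\mathrm{Vol}(\Omega_i,g)-\mathrm{Vol}(\Omega_i,g_0)=\int_{\Omega_i}\Bigl(\sqrt{\det(g_0^{-1}g)}-1\Bigr)\,d\mu_{g_0}.
\]
Set $f:=\sqrt{\det(g_0^{-1}g)}-1$. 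If $f\in L^{1}(M,d\mu_{g_0})$, then for any exhaustion $\{\Omega_i\}$ the dominated convergence theorem, with dominating function $|f|$ and using $\mathbf 1_{\Omega_i}\to 1$ pointwise, gives
\[
V_{g_0}(g)=\lim_{i\to\infty}\int_{\Omega_i}f\,d\mu_{g_0}=\int_M f\,d\mu_{g_0},
\]
which exists and is manifestly independent of the chosen exhaustion. So the whole lemma comes down to showing $f\in L^{1}(M,d\mu_{g_0})$.

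For that I would split $M$ into a geodesic ball $B(p_0,R_1)$ and its complement. On $\overline{B(p_0,R_1)}$ the metric $g$ is continuous and positive definite, so $f$ is bounded there, and $\mathrm{Vol}(B(p_0,R_1),g_0)<\infty$ by completeness, hence the contribution of the ball is finite for any $R_1$. Outside the ball I use the elementary pointwise estimate obtained by diagonalizing $h:=g-g_0$ in a $g_0$-orthonormal frame: $\det(g_0^{-1}g)=\prod_k(1+\mu_k)$ with $\sum_k\mu_k^2=|h|_{g_0}^2$, which gives $|f|\le C_n|g-g_0|_{g_0}$ wherever $|g-g_0|_{g_0}\le 1/(2\sqrt n)$. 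Since $|g-g_0|_{g_0}\le Ce^{-\tau d(x)}$ by hypothesis, this smallness holds on $M\setminus B(p_0,R_1)$ once $R_1$ is large, and there $|f|\le Ce^{-\tau d(x)}$. Thus everything reduces to the single estimate
\[
\int_M e^{-\tau d(x)}\,d\mu_{g_0}<\infty.
\]

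This last estimate is the heart of the matter and is where $\tau>\overline{\lim}_{d\to\infty}\Delta_{g_0}d$ enters. As $M$ is a simply connected symmetric space of non-compact type, it is a Hadamard manifold, so $\exp_{p_0}$ is a diffeomorphism and there is no cut locus; writing $d\mu_{g_0}=J(r,\theta)\,dr\,d\theta$ in global geodesic polar coordinates ($\theta\in S^{n-1}\subset T_{p_0}M$), the first-variation formula gives $\partial_r\log J(r,\theta)=(\Delta_{g_0}d)\big|_{\exp_{p_0}(r\theta)}$. Put $h_0:=\overline{\lim}_{d\to\infty}\Delta_{g_0}d$, a finite number by \Cref{laplacian distance function} and the computation following it (in rank $1$ it equals $\sum_{i=1}^{n-1}a_i(v_1)$; in general it is a finite supremum taken over a single Weyl chamber, using that $\Delta_{g_0}d$ is $K$-invariant). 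Given $\varepsilon:=\tfrac12(\tau-h_0)>0$, the definition of $\overline{\lim}$ provides $R_\varepsilon\ge R_1$ with $\Delta_{g_0}d\le h_0+\varepsilon$ on $M\setminus B(p_0,R_\varepsilon)$; integrating $\partial_r\log J$ yields $J(r,\theta)\le C_\varepsilon e^{(h_0+\varepsilon)r}$ for $r\ge R_\varepsilon$, where $C_\varepsilon:=e^{-(h_0+\varepsilon)R_\varepsilon}\sup_\theta J(R_\varepsilon,\theta)<\infty$. Hence, for a suitable constant $C'$,
\[
\int_{M\setminus B(p_0,R_\varepsilon)}e^{-\tau d}\,d\mu_{g_0}\le C'\int_{R_\varepsilon}^{\infty}e^{-(\tau-h_0-\varepsilon)r}\,dr<\infty,
\]
since $\tau-h_0-\varepsilon=\varepsilon>0$; adding the finite contribution of the compact ball $B(p_0,R_\varepsilon)$ gives $\int_M e^{-\tau d}\,d\mu_{g_0}<\infty$, and the lemma follows.

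I expect the only genuinely non-formal ingredients to be (i) the fact that $h_0=\overline{\lim}_{d\to\infty}\Delta_{g_0}d$ is finite and can be read off on a single Weyl chamber (a geodesic ray when the rank is $1$) — which is exactly what \Cref{laplacian distance function} and the subsequent discussion provide, via the $K$-invariance of $\Delta_{g_0}d$ — and (ii) the absence of a cut locus, which licenses the global polar-coordinate computation and holds here because symmetric spaces of non-compact type are Hadamard manifolds; everything else is a routine application of the dominated convergence theorem.
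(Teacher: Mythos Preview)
Your proposal is correct and follows essentially the same approach as the paper: express the volume difference as $\int(\sqrt{\det(g_0^{-1}g)}-1)\,d\mu_{g_0}$, bound the integrand pointwise by a constant times $|g-g_0|_{g_0}$, reduce to $\int_M e^{-\tau d}\,d\mu_{g_0}<\infty$, and establish the latter via the polar-coordinate identity $\partial_r\log J=\Delta_{g_0}d$. If anything, your writeup is more careful on two points the paper leaves implicit: you invoke dominated convergence explicitly to get exhaustion-independence, and you insert the $\varepsilon$-room between $\tau$ and $h_0$ when bounding the Jacobian (the paper writes $J\le C_1e^{\tau_0 r}$ directly, which strictly speaking needs the same $\varepsilon$-argument you give).
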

\begin{proof}
    For $(M, g_0)$ being a Cartan-Hadamard manifold, we can establish a global normal coordinate at $p_0\in M$ with respect to $g_0$, such that $M\setminus\{p_0\} \cong \mathbb{R}_+ \times \mathbb{S}^{n - 1}$ and $g_0 = dr^2 + g_r$, where $r > 0$ is the distance function to $p_0\in M$ and $g_r$ is a family of metrics on $\mathbb{S}^{n - 1}$. Denote 
    \begin{align*}
        \hat{\tau}_0 := \overline{\lim_{d\rightarrow+\infty}}\Delta_g r = \overline{\lim_{d\rightarrow+\infty}} \Delta_g d. 
    \end{align*}
    It is straightforward to verify that
    \begin{align*}
        \partial_r \ln \left( \sqrt{\mathrm{det}(g_r)} \right) = \Delta_{g_0} r.
    \end{align*}
    Thus, for any fixed $\tau_1\in (\hat{\tau}_0, \tau)$, we always have
    \begin{align}\label{sphere area blow up}
        \sqrt{\mathrm{det} (g_r)} \leq C_1 e^{\tau_1 \cdot r},
    \end{align}
    where $C_1 > 0$ is a constant.
    
    On the other hand, let $\{\Omega_i\}_{i=1}^{+\infty}$ be an arbitrary compact exhaustion of $M$. For notational simplicity, under the normal coordinate, $(r, \theta)\in \mathbb{R}_{+} \times \mathbb{S}^{n - 1}$, define 
    \begin{align*}
        f(r, \theta) := \sqrt{\mathrm{det}(g)} - \sqrt{\mathrm{det}(g_0)}. 
    \end{align*}
    Then, 
    \begin{align*}
        \mathrm{Vol}(\Omega_i, g) - \mathrm{Vol}(\Omega_i, g_0) =& \int_{\Omega_i}d\mu_g - \int_{\Omega_i}d\mu_{g_0}\\ 
        =& \int_{\Omega_i} \sqrt{\mathrm{det}(g)} - \sqrt{\mathrm{det}(g_0)}\;dr\;d\theta\\
        =& \int_{\Omega_i} f(r, \theta)\; dr\; d\theta,
    \end{align*}
    where $dr,d\theta$ denotes the product of the radial measure and the standard volume form on $\mathbb{S}^{n-1}$.
    
    In order to show the existence of $\lim_{i \rightarrow +\infty} \int_{\Omega_{i}}f(r, \theta)dr d\theta$ for any arbitrary compact exhaustion $\{\Omega_i\}_{i = 1}^{+\infty}$, it suffices to show that $f(r, \theta)$ is absolutely integrable on $\mathbb{R}_+ \times \mathbb{S}^{n - 1}$. 
    Indeed, 
    \begin{align*}
         \int_{B(p_0, s)} |f(r, \theta)|\; dr\; d\theta =&  \int_{B(p_0, s)}|\sqrt{\mathrm{det}(g)} - \sqrt{\mathrm{det}(g_0)}|\; dr\; d\theta\\
        =&  \int_{0}^{+\infty} \int_{\mathbb{S}^{n - 1}}|\sqrt{\frac{\mathrm{det}(g)}{\mathrm{det}(g_0)}} - 1|\cdot |\sqrt{\mathrm{det}(g_0)}|\; d\theta\; dr\\
        \leq& C_2 \int_{0}^{+\infty} \int_{\mathbb{S}^{n - 1}}|g - g_0|\cdot \sqrt{\mathrm{det}(g_r)}\; d\theta\; dr\\
        \leq& C_3 \int_{0}^{+\infty} \int_{\mathbb{S}^{n - 1}} e^{-\tau r} e^{\tau_1 r} dr < +\infty,
    \end{align*}  
    where $C_2, C_3 > 0$ are two constants. Consequently, $f(r, \theta)$ is absolutely integrable on $\mathbb{R}_+ \times \mathbb{S}^{n - 1}$. Thus, 
    \begin{align*}
        V_{g_0}(g) = \lim_{i\rightarrow+\infty}(\mathrm{Vol}(\Omega_i, g) - \mathrm{Vol}(\Omega_i, g)) = \int_{\mathbb{R}_{+}\times \mathbb{S}^{n - 1}} f(r, \theta) \; dr\; d\theta.
    \end{align*}
\end{proof}
\begin{remark}
    In particular, for the hyperbolic space $\mathbb{H}^{n}$ with the standard hyperbolic metric $g_{\mathbb{H}^n}$, we know that $G = SO(1, n)$ and $K = SO(n)$. Taking $\lambda = \frac{1}{2(n - 1)}$ in \eqref{metric = Killing form}, it is straightforward to verify that it is of rank 1, $a_1(v) = \cdots = a_{n - 1}(v) = 1$ for $v\in \mathfrak{a}$ and $|\sigma(v)| = 1$, $\mathrm{Ric}[g_0] = (n - 1)g_0$, and  $\lim_{d\rightarrow+\infty}\Delta_{g_0} d = (n - 1)$. Therefore, the relative volume $\mathrm{Vol}_{g_{\mathbb{H}^{n }}}(g)$ exists, provided that $\|e^{\tau d} (g - g_{\mathbb{H}^n})\| < +\infty$ for some $\tau > n - 1$.
\end{remark}

Next, we recall the approach of I.4.A in \cite{Bi2006} for computing $\gamma$ defined in \eqref{two important quantities}.We first express the curvature operator, $\mathrm{Rm}$, in terms of Casimir operators associated with representations of the Lie algebra $\mathfrak{l}$. Then, by applying the representation theory of semisimple Lie algebras, we determine the largest eigenvalue of $\mathrm{Rm}$ which coincides with $\gamma$ in \eqref{two important quantities}. 

\begin{lemma}\label{curvature representation lemma}
Let $(M, g_0)$ be an irreducible symmetric space of non-compact type. Denote
\begin{align*}
    \mathrm{Rm}: T^{*}M &\otimes T^*M \rightarrow T^*M\otimes T^*M,\\
    h_{ij} &\mapsto R_{ikjl} g_0^{kk_1} g_0^{ll_1} h_{k_1l_1}.
\end{align*}
Then, for any $u, v \in \mathfrak{p}$, ,
\begin{align}\label{curvature representation 1}
    \omega^{-1}\circ\mathrm{Rm} \circ \omega (u\otimes v) = -\sum_{i}^{m} [e_i, u] \otimes [e_i, v],  
\end{align}
where $e_1, \cdots, e_m$ is an orthonormal basis of $\mathfrak{l}$ with respect to  $( \cdot , \cdot )$ defined in \eqref{inner product on lie algebra}
\begin{align}
    \omega: \mathfrak{p}\otimes \mathfrak{p} \rightarrow T^*_{p_0}M \otimes T^*_{p_0}M, \quad \omega (u\otimes v) = (\sigma(u))^{\flat}\otimes (\sigma(v))^{\flat}\Big|_{p_0}.
\end{align}
Moreover, 
\begin{align}\label{curvature representation}
    \omega^{-1}\circ\mathrm{Rm}\circ \omega = \frac{1}{2}(\mathfrak{C}(\mathfrak{l}, \mathfrak{p}\otimes\mathfrak{p}) - \mathfrak{C}(\mathfrak{l}, \mathfrak{p})),
\end{align}
where 
\begin{align*}
    \mathfrak{C}(\mathfrak{l}, \mathfrak{p}\otimes \mathfrak{p}) : \mathfrak{p} \otimes \mathfrak{p} &\rightarrow \mathfrak{p} \otimes \mathfrak{p},\\
u\otimes v &\mapsto -\sum_{i} [e_i, [e_i, u]] \otimes v - \sum_{i}u \otimes [e_i, [e_i, v]] - 2\sum_{i}[e_i, u]\otimes [e_i, v], \\
    \mathfrak{C}(\mathfrak{l}, \mathfrak{p}): \mathfrak{p} \otimes \mathfrak{p} &\rightarrow \mathfrak{p} \otimes \mathfrak{p}, \\
    u\otimes v &\mapsto -\sum_{i} [e_i, [e_i, u]] \otimes v - \sum_{i}u \otimes [e_i, [e_i, v]] 
\end{align*}
are the corresponding Casimir operators.
\end{lemma}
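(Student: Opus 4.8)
The plan is to prove both displayed identities by a pointwise computation at the base point $p_0$: since $\mathrm{Rm}$ is a $G$-invariant, parallel tensor on a symmetric space ($\nabla\mathrm{Rm}\equiv 0$), an identity between $G$-equivariant algebraic expressions at $p_0$ is equivalent to the global one. I will work through the isomorphism $\sigma:\mathfrak{p}\to T_{p_0}M$, identifying $T_{p_0}M$ with $T^*_{p_0}M$ via $g_0$ as is implicit in the definition of $\omega$, and — using the freedom in the choice of $\lambda$ noted after \Cref{metric = Killing form} — I normalize so that $g_0(\sigma(\cdot),\sigma(\cdot))|_{p_0}$ is the restriction to $\mathfrak{p}$ of the inner product $(\cdot,\cdot)$; with this choice the $e_i$ are an orthonormal basis of $\mathfrak{l}$ for $(\cdot,\cdot)|_{\mathfrak{l}}$, which is what the Killing-form normalization in the statement amounts to. Because $\mathrm{Rm}$ is linear and the decomposable tensors $\sigma(u)\otimes\sigma(v)$ span $T^*_{p_0}M\otimes T^*_{p_0}M$, it suffices to evaluate $\mathrm{Rm}(\sigma(u)\otimes\sigma(v))$ against an arbitrary pair $(\sigma(a),\sigma(b))$ with $a,b,u,v\in\mathfrak{p}$.

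First I would reduce $\mathrm{Rm}$ to a curvature pairing. Unwinding the contraction $h_{ij}\mapsto R_{ikjl}\,g^{kk_1}g^{ll_1}h_{k_1l_1}$ — where the first and third indices of $R_{ikjl}$ are the free output slots and the second and fourth are the ones contracted against $h$ — and using the metric identification, one obtains
\[
\mathrm{Rm}(\sigma(u)\otimes\sigma(v))(\sigma(a),\sigma(b))\big|_{p_0}=g_0\big(R(\sigma(a),\sigma(u))\sigma(b),\,\sigma(v)\big)\big|_{p_0}.
\]
Here \Cref{sectional curvature} applies, since the curvature tensor at $p_0$ is tensorial and hence may be evaluated on the Killing-field representatives $\sigma(a),\sigma(u),\sigma(b)$ of the relevant tangent vectors; it gives $R(\sigma(a),\sigma(u))\sigma(b)|_{p_0}=\sigma([[a,u],b])|_{p_0}$. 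Pairing with $\sigma(v)$ via $g_0(\sigma(\cdot),\sigma(\cdot))|_{p_0}=(\cdot,\cdot)$ then yields $\mathrm{Rm}(\sigma(u)\otimes\sigma(v))(\sigma(a),\sigma(b))=([[a,u],b],v)$.

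Next I would prove the algebraic identity
\[
([[a,u],b],v)=-\sum_i([e_i,u],a)\,([e_i,v],b)\qquad(a,b,u,v\in\mathfrak{p}),
\]
which is the technical core. Since $[\mathfrak{p},\mathfrak{p}]\subset\mathfrak{l}$, one may expand $[a,u]$ in the orthonormal basis $\{e_i\}$ of $\mathfrak{l}$, rewriting the left-hand side as a sum over $i$ of products of Killing-form pairings; a handful of applications of the invariance $\langle[x,y],z\rangle=-\langle y,[x,z]\rangle$ — in particular the skew-symmetry of each $\mathrm{ad}(e_i)$ on $\mathfrak{p}$ with respect to $(\cdot,\cdot)$ — then rearranges this into the right-hand side. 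Combined with the previous step and $g_0(\sigma(\cdot),\sigma(\cdot))|_{p_0}=(\cdot,\cdot)$, this says $\mathrm{Rm}(\sigma(u)\otimes\sigma(v))=-\sum_i\sigma([e_i,u])\otimes\sigma([e_i,v])$, i.e.\ \Cref{curvature representation 1} after applying $\omega^{-1}$.

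Finally, \Cref{curvature representation} is a formal consequence. Writing $D_i$ for the derivation of $\mathfrak{p}\otimes\mathfrak{p}$ induced by $\mathrm{ad}(e_i)$, so that $D_i(u\otimes v)=[e_i,u]\otimes v+u\otimes[e_i,v]$, one computes
\[
-\sum_i D_i^2(u\otimes v)=-\sum_i[e_i,[e_i,u]]\otimes v-\sum_i u\otimes[e_i,[e_i,v]]-2\sum_i[e_i,u]\otimes[e_i,v],
\]
which is precisely the operator $\mathfrak{C}(\mathfrak{l},\mathfrak{p}\otimes\mathfrak{p})$ of the statement; subtracting the operator $\mathfrak{C}(\mathfrak{l},\mathfrak{p})$ (the first two groups of terms) and dividing by $2$ leaves $-\sum_i[e_i,u]\otimes[e_i,v]$, which by \Cref{curvature representation 1} is $\omega^{-1}\circ\mathrm{Rm}\circ\omega(u\otimes v)$. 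I expect the main obstacle to be bookkeeping rather than ideas: fixing the curvature sign convention consistently with \Cref{sectional curvature}, keeping track of which pair of indices of $R_{ikjl}$ is contracted against $h$, and carrying the normalization $\lambda$ carefully so that the $e_i$ are orthonormal for the right form ($(\cdot,\cdot)|_{\mathfrak{l}}$, which coincides with $-\langle\cdot,\cdot\rangle|_{\mathfrak{l}}$ only when $\lambda=1$). The organizing scheme is that of I.4.A in \cite{Bi2006}.
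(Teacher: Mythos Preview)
Your proposal is correct and follows essentially the same approach as the paper: both arguments compute $\mathrm{Rm}(\sigma(u)\otimes\sigma(v))$ at $p_0$ via the curvature formula \eqref{sectional curvature}, expand the resulting bracket $[a,u]\in\mathfrak{l}$ in the orthonormal basis $\{e_i\}$, and use the invariance of the Killing form to rearrange; the only cosmetic difference is that the paper fixes an orthonormal basis $b_1,\dots,b_n$ of $\mathfrak{p}$ and manipulates components $R_{kilj}$ directly, whereas you work basis-free by pairing against arbitrary test vectors $\sigma(a),\sigma(b)$. The Casimir identity \eqref{curvature representation} is dismissed as ``straightforward to verify'' in the paper, exactly as in your final paragraph.
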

\begin{proof}
    Let $f_1, \cdots, f_n$ be an orthonormal basis of $\mathfrak{p}$.  By \eqref{metric = Killing form}, \eqref{sectional curvature} and $\eqref{inner product on lie algebra}$, we obtain 
    \begin{align*}
        \mathrm{Rm}(\omega(f_i \otimes f_j)) =&
        \mathrm{Rm} \left( \sigma(f_i)^{\flat} \otimes \sigma(f_j)^{\flat} \right)\Big|_{p_0}\\
        =& \sum_{k = 1, l = 1}^{n} R_{kilj} \sigma(f_k)^{\flat} \otimes \sigma(f_l)^{\flat}\Big|_{p_0}\\
        =& \sum_{k = 1, l = 1}^{n} \lambda\langle [f_k, f_i], [f_l, f_j] \rangle \sigma(f_k)^{\flat} \otimes \sigma(f_l)^{\flat}\Big|_{p_0}\\
        =& -\sum_{k = 1, l = 1}^{n}\sum_{t = 1}^{m} \lambda\langle [f_k, f_i], e_t \rangle \cdot \lambda\langle [f_l, f_j], e_t \rangle \sigma(f_k)^{\flat} \otimes \sigma(f_l)^{\flat}\Big|_{p_0}\\
        =& -\sum_{k = 1, l = 1}^{n}\sum_{t = 1}^{m} \lambda\langle [e_t, f_i], f_k \rangle \cdot \lambda\langle [e_t, f_j], f_l \rangle \sigma(f_k)^{\flat} \otimes \sigma(f_l)^{\flat}\Big|_{p_0}\\
        =& \omega \left( -\sum_{t = 1}^{m} \left(\sum_{k = 1}^{n} \lambda\langle [e_t, f_i], f_k \rangle f_k \right) \otimes \left( \sum_{l = 1}^{n} \lambda\langle [e_t, f_j], f_l \rangle f_l \right) \right)\\
        =& \omega \left( -\sum_{t = 1}^{m}[e_t, f_i] \otimes [e_t, f_j] \right),
    \end{align*}
    which implies \eqref{curvature representation 1}, and according to \eqref{curvature representation 1}, it is straightforward to verify \eqref{curvature representation}. 
\end{proof}
By the representation theory (\cite{Bi2006}-\cite{FH2013}), the Casimir operator is a scalar operator for an irreducible representation of a semisimple Lie algebra, and this scalar can be directly computed by the highest weight for this representation (see details in Chapter 22.2 of \cite{Hu2012}). On the other hand, the irreducibility of a symmetric space implies that the representation of the Lie algebra $\mathfrak{l}$ acting on $\mathfrak{p}$ is irreducible. Thus, $\mathfrak{C}(\mathfrak{l}, \mathfrak{p})$ is easy to handle. However, $\mathfrak{l}$ acting on $\mathfrak{p}\otimes \mathfrak{p}$ might not be irreducible, which is the difficult part. Luckily, we can use Steinberg's formula (see details in Chapter 24.4 of \cite{Hu2012}) to decompose the space $\mathfrak{p}\otimes \mathfrak{p}$ into irreducible subspaces with respect to $\mathfrak{l}$ action, and compute the Casimir operators, $\mathfrak{C}(\mathfrak{l}, \mathfrak{p} \otimes \mathfrak{p})$, for each irreducible subspace in the decomposition. Thus, we can figure out the largest eigenvalue of $\mathrm{Rm}$. For the rank 1 case, the largest eigenvalues of $\mathrm{Rm}$ are computed by O.Biquard in I.4.A of \cite{Bi2006}. We have listed them in \Cref{symmetric spaces of non-compact type of rank 1}. For simplicity, we take appropriate $\lambda$ in \eqref{metric = Killing form} such that $a_1(v_1) = 1$. 

By \Cref{laplacian distance function}, \Cref{curvature representation lemma} and the classification of simply connected irreducible symmetric spaces of non-compact type of rank 1 with an appropriate $\lambda$ in \eqref{metric = Killing form} such that $a_1(v_1) = 1$, we have the following table, 
\begin{table}[htbp]
\begin{tabular}{|l|l|l|l|l|l|}
\hline
& dim & roots & $\gamma$ & $\tau_0$ & $R_0$ \\ \hline
        $\mathbb{H}^m$ & m & $a_i(v_1) = 1\;\; \text{for }\; 1\leq m - 1$ & 1 & $m - 1$ & $-m(m - 1)$ \\ \hline
        $\mathbb{C}\mathbb{H}^m$ & 2m & $\begin{cases}
            a_i(v_1) = 1 & \text{for }\; 1\leq i \leq 2m - 2\\
            a_i(v_1) = 2 & \text{for }\; i = 2m - 1\\
        \end{cases}$ & 4 & $2m$ & $-2m(2m + 2)$ \\ \hline
        $\mathbb{H}\mathbb{H}^m$ & 4m & $\begin{cases}
            a_i(v_1) = 1 & \text{for }\; 1\leq i\leq 4m - 4\\
            a_i(v_1) = 2 & \text{for }\; 4m - 3\leq i \leq 4m - 1\\
        \end{cases}$ & $4$ & $4m + 2$ & $-4m(4m + 8)$ \\ \hline
        $\mathbb{O}\mathbb{H}^2 $ & 16 & $\begin{cases}
            a_i(v_1) = 1 & \text{for }\; 1\leq i\leq 8\\
            a_i(v_1) = 2 & \text{for }\; 9\leq i \leq 15\\
        \end{cases}$ & 4 & 22 & $-16\cdot (8 + 4\cdot 7)$  \\ \hline
\end{tabular}
\caption{symmetric spaces of non-compact type of rank 1}
\label{symmetric spaces of non-compact type of rank 1}
\end{table}
where 
\begin{itemize}
    \item $\mathbb{H}^m = SO(1, m)/SO(m)$ are real hyperbolic spaces;  
    \item $\mathbb{C}\mathbb{H}^m = SU(1, m)/U(m)$ are complex hyperbolic spaces;
    \item $\mathbb{H}\mathbb{H}^m = Sp(1, m)/Sp(1)Sp(m)$  are quaternionic hyperbolic spaces;
    \item $\mathbb{O}\mathbb{H}^2 = F^{-20}_4/Spin_9$ is the octonionic hyperbolic space,
\end{itemize}
and $\gamma$ and $\tau_0$ are defined in \eqref{two important quantities}. The roots of each symmetric space are given by the classification of their underlying real Lie algebra $\mathfrak{g}$.

\section{Normalized Ricci Flow and Normalized Ricci-DeTurck Flow}\label{sec: ricci_flow}

In this section, we introduce the normalized Ricci flow and the normalized Ricci--DeTurck flow and review basic stability results.
\subsection{Definitions}
Let $(M^n, g_0)$ be an Einstein manifold with scalar curvature $R_0$, and let $g$ be another Riemannian metric on $M$. The normalized Ricci flow (abbreviated as NRF) is a one-parameter family of metrics $\tilde g(t)$ (and likewise $\hat g(t)$ below) on $M$ evolving according to
\begin{align}\label{NRF}
    \left\{
        \begin{aligned} 
            \frac{\partial}{\partial t} \tilde{g}(t) & =-2\left(\mathrm{Ric}[\tilde{g}(t)] - \frac{R_0}{n} \tilde{g}(t)\right) \\ 
            \tilde{g}(0) &=g
        \end{aligned}
    \right.,
\end{align}
where $\mathrm{Ric}[\tilde{g}(t)]$ is the Ricci curvature with respect to $\tilde{g}(t)$. Since the linearization of the right-hand side of \eqref{NRF} is only weakly elliptic, it is difficult to analyze directly. We therefore consider the normalized Ricci--DeTurck flow (abbreviated as NRDF) with respect to the background metric $g_0$:
\begin{equation}\label{NRDF}
    \left\{
        \begin{aligned}
            \frac{\partial}{\partial t} \hat{g}(t) &= -2\left(\mathrm{Ric}[\hat{g}(t)] - \frac{R_0}{n} \hat{g}(t) \right)+\hat{\nabla}_i W_j+\hat{\nabla}_j W_i  \\
            \hat{g}(0) &= g
        \end{aligned}
    \right.,
\end{equation}
where $\hat{\nabla}$ is the covariant derivative with respect to $\hat{g}$, 
\begin{align}
    W_j = \hat{g}_{j k} \hat{g}^{p q} \left( \hat{\Gamma}_{pq}^k - \Gamma_{pq}^k \right),
\end{align}
and $\Gamma, \hat{\Gamma}$ are the Christoffel symbols with respect to the metrics $g_0$ and $\hat{g}$ respectively. $\tilde{g}$ and $\hat{g}$ can be related by a family of smooth diffeomorphisms $\Phi_t: M\rightarrow M$, which satisfy the following equation
\begin{equation}\label{gauge term diffeomorphism}
    \left\{
        \begin{aligned}
            \frac{\partial}{\partial t} \Phi_t(x) =& -W(\Phi_t(x), t)\\
            \Phi_0(x) =& \; \mathrm{id}(x)
        \end{aligned}
    \right.,
\end{equation}
where the component of $W$ is given by $W^i := \hat{g}^{ij} W_j$. It is straightforward to verify that $\tilde{g}(t) = \Phi_{t}^{*}\hat{g}(t)$. In particular, the linearization of the right-hand side of \eqref{NRDF} at the background metric $g_0$ is strictly elliptic. We denote the corresponding linearized operator by $L$. A direct computation gives
\begin{align}
    L = \Delta_L - 2\frac{R_0}{n},
\end{align}
where $\Delta_L$ is the Lichnerowicz Laplacian with respect to the metric $g_0$ on the symmetric 2-tensor $u$, i.e., 
\begin{align}
    \Delta_L u_{ij} := \Delta_{g_0} u_{ij} + 2R_{ipjq} u^{pq} - R_{iq} u^{q}_j - R_{jq}u^{q}_{i}.
\end{align}
Moreover, Einstein condition for $g_0$ implies 
\begin{align}
    L u_{ij} = \Delta_{g_0} u_{ij} + 2 R_{ipjq} u^{pq}.
\end{align}

The positivity of this operator plays a crucial role in the study of Einstein manifolds. To further analyze the geometric properties of Einstein manifolds, we introduce the notion of stability for Einstein metrics. This concept is closely tied to the spectral properties of the operator $L$.
\begin{definition}\label{strictly stable}
    A complete Riemannian manifold $(M^n, g_0)$ is called strictly stable if 
    \begin{align}
        \lambda_0 :=  \inf_h \frac{\displaystyle -\int_M  \langle L h, h \rangle d\mu_{g_0}}{\displaystyle \int_M |h|^2 d\mu_{g_0}} > 0,
    \end{align}
    where the infimum is taken among all nonzero symmetric 2-tensor $h$ such that 
    \begin{align*}
        \int_{M} \left( |h|^2 + |\nabla h|^2 \right) d\mu_{g_0} < + \infty.
    \end{align*}
\end{definition}
\begin{remark}
    R.Bamler showed that simply connected irreducible symmetric spaces of non-compact type are all strictly stable except $\mathbb{H}^2$ in Section 5.3 of  \cite{Ba2015}. 
\end{remark}

\subsection{Short-Time Existence}
We first recall the classical short-time existence results for the NRDF \eqref{NRDF}. Hamilton and Shi proved short-time existence for the NRF \eqref{NRF} on closed and complete manifolds, respectively \cite{Ha1982, Sh1989}. Moreover, Shi and Simon established the corresponding derivative estimates for complete manifolds \cite{Sh1989, Si2002}. Specifically, they proved that if $g_0$ is a complete metric with $\|\mathrm{Rm}\|_{C^0} \le k_0$, then there exists $\varepsilon>0$ such that, whenever $\|g-g_0\|_{C^0} \le \varepsilon$, there exists $T = T(n,k_0) > 0$ for which \eqref{NRDF} admits a solution $\hat g(t)$ on $t\in[0,T]$, with initial data $g$, satisfying
\begin{align}
    \|\hat g(t) - g_0\|_{C^0} \le 2\varepsilon,
\end{align}
and, for each integer $i\ge 1$,
\begin{align}
    \|\nabla^{i}\hat g(t)\|_{C^0}\le \frac{C(k_0,k_1,\dots,k_i)}{t^{i/2}}.
\end{align}
Here $k_0,k_1,\dots,k_i$ are positive constants controlling the curvature and its derivatives, namely
\begin{align}
    \|\nabla^{i}\mathrm{Rm}\|_{C^0} \le k_i.
\end{align}

\subsection{Long-Time Existence}
Regarding long-time existence, Ye established such a result for closed manifolds under curvature pinching conditions \cite{Ye1993}. Subsequently, Li--Yin and Schn\"urer--Schulze--Simon proved long-time existence for perturbations of hyperbolic space \cite{LY2010,SSS2010}, and Bamler treated perturbations of symmetric spaces of non-compact type \cite{Ba2015}. Qing, Shi, and Wu established long-time existence for perturbations of conformally compact Einstein manifolds \cite{QSW2013}. As observed in \cite{HJS2016, QSW2013}, both the NRF \eqref{NRF} and the NRDF \eqref{NRDF} preserve the conformal infinity.

In this subsection, we prove a long-time existence result for perturbations of strictly stable Einstein manifolds (\Cref{long time existence without weights}), a class that includes simply connected irreducible symmetric spaces of non-compact type. Following \cite{LY2010, QSW2013, Ye1993}, the key input is exponential decay of the metric difference in time. For convenience, we denote $\hat{h}(t,x) := \hat{g}(t,x) - g_0(x)$, or simply $\hat{h}$.

\begin{lemma}\label{local L^2 to C^0 estimate}
    Let $(M^n, g_0)$ be a strictly stable Einstein manifold satisfying 
    \begin{itemize}
        \item $\|\mathrm{Rm}\|_{C^{0}} \leq k_0$, for some $k_0 > 0$;
        \item $\displaystyle \inf_{x\in M} \mathrm{Vol}(B(x, 1), g_0) \geq v_0$, for some $v_0 > 0$;
        \item $\displaystyle \int_{M} e^{-\alpha d(x)} d\mu_{g_0}(x) < +\infty$, for some $\alpha > 0$.
    \end{itemize}
    Then, there exist sufficiently small $\varepsilon > 0$ and $\lambda_{\varepsilon} > 0$, such that for any $T > 0$ and any solution, $\hat{g}(t, \cdot)$, to the NRDF \eqref{NRDF}, starting at $g(\cdot)$, for $t\in [0, T]$, satisfying that 
    \begin{align*}
        \|\hat{g}(t, \cdot) - g_0(\cdot)\|_{C^{0}} \leq \varepsilon, \quad \text{for }\; t\in[0, T], \quad  \text{and} \quad \int_{M}|g - g_0|^{2} d\mu_{g_0} < +\infty, 
    \end{align*}
    we have that  
    \begin{align}
        \sup_{ x\in  M} |\nabla^{i} (\hat{g}(t, x) - g_0(x))|^2 \leq C e^{-2\lambda_{\varepsilon}t} \int_{M}|g - g_0|^2 d\mu_{g_0}, \quad \text{for any }\;  i \in \mathbb{N},
    \end{align}
    where $C$ is a constant depending on $t_0$, $k_0$, $v_0$, $n$, $i$. 
\end{lemma}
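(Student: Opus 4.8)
The plan is to derive a differential inequality for the localized $L^2$-energy of $\hat h(t)$ and then upgrade the resulting $L^2$-decay to pointwise $C^0$ (and higher-derivative) decay via local parabolic regularity, exactly in the spirit of Ye \cite{Ye1993} and Li--Yin \cite{LY2010}. First I would write the evolution equation for $\hat h = \hat g - g_0$ under the NRDF (\ref{NRDF}). Schematically, $\partial_t \hat h_{ij} = L\hat h_{ij} + Q(\hat h, \nabla\hat h, \nabla^2\hat h)$, where $L = \Delta_L - 2\frac{R_0}{n}$ is the linear operator introduced above (which on the Einstein background equals $\Delta_{g_0} + 2\,\mathrm{Rm}$), and $Q$ collects the quadratic-and-higher error terms; crucially $Q$ is bounded by $C(\|\hat h\|_{C^0})\big(|\hat h|\,|\nabla^2\hat h| + |\nabla\hat h|^2 + |\hat h|^2\big)$ with the constant going to the "pure" size as $\|\hat h\|_{C^0}\to 0$. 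Since the background is strictly stable, $\int_M \langle -Lu,u\rangle\,d\mu_{g_0} \geq \lambda_0 \int_M |u|^2\,d\mu_{g_0}$ for all $u$ with finite $H^1$-norm; the hypotheses $\|\hat h(t)\|_{C^0}\le\varepsilon$ for $t\in[0,T)$ and $\int_M |\hat h(0)|^2 < \infty$ guarantee (by the short-time derivative estimates of Shi--Simon quoted above, plus the finiteness of $\int_M e^{-\alpha d}$) that $\hat h(t)\in H^1(M,g_0)$ for positive times, so the stability inequality applies.

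Next I would compute $\frac{d}{dt}\int_M |\hat h|^2\,d\mu_{g_0}$. Differentiating under the integral (justified by the decay coming from the weighted hypothesis and short-time estimates), one gets $\frac{d}{dt}\int_M |\hat h|^2 = 2\int_M \langle \partial_t \hat h, \hat h\rangle + \int_M |\hat h|^2 \,\partial_t(d\mu_{g_0})$; the last term vanishes since $g_0$ is fixed. Substituting the evolution equation and integrating the $\Delta_{g_0}$-term by parts yields $\frac{d}{dt}\int_M |\hat h|^2 = -2\int_M |\nabla\hat h|^2 + 2\int_M \langle 2\,\mathrm{Rm}(\hat h),\hat h\rangle - 4\frac{R_0}{n}\cdot 0 + 2\int_M \langle Q,\hat h\rangle$. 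Wait — on the Einstein background $L\hat h = \Delta_{g_0}\hat h + 2\,\mathrm{Rm}(\hat h)$, so the first two terms combine to $2\int_M \langle L\hat h, \hat h\rangle \le -2\lambda_0 \int_M |\hat h|^2$ by strict stability. For the error term, integrating the $|\hat h||\nabla^2\hat h|$ piece by parts once moves a derivative onto $\hat h$, producing terms controlled by $\varepsilon(\int |\nabla\hat h|^2 + \int|\hat h|^2)$; choosing $\varepsilon$ small enough absorbs these into the good terms, leaving $\frac{d}{dt}\int_M |\hat h|^2 \le -2\lambda_\varepsilon \int_M |\hat h|^2$ for some $\lambda_\varepsilon \in (0,\lambda_0)$. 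Grönwall then gives $\int_M |\hat h(t)|^2\,d\mu_{g_0} \le e^{-2\lambda_\varepsilon t}\int_M |\hat h(0)|^2\,d\mu_{g_0}$.

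Finally I would localize: on each unit ball $B(x,1)$, the NRDF is uniformly parabolic (since $|\hat h|\le\varepsilon$ keeps $\hat g$ comparable to $g_0$, and $|\mathrm{Rm}(g_0)|<k_0$, $\mathrm{Vol}_{g_0}(B(x,1))\ge v_0$ give uniform structural control). Interior parabolic Schauder/$L^p$ estimates (cf. the Shi--Simon derivative estimates) applied on the parabolic cylinder $[\sigma/2, T]\times B(x,1)$ bound $\sup |\nabla^i \hat g(t,x)|^2$ by $C(\sigma,k_0,v_0,n,i)\int_{t-\sigma/2}^{t}\int_{B(x,1)}|\hat h|^2$, and the latter is $\le \int_M |\hat h(s)|^2\,d\mu_{g_0} \le e^{-2\lambda_\varepsilon(t-\sigma/2)}\int_M |\hat h(0)|^2$. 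Absorbing the $e^{\lambda_\varepsilon \sigma}$ factor into $C$ gives the claimed estimate for all $i\in\mathbb N$ on $[\sigma,T]\times M$. The main obstacle is the energy estimate: one must carefully justify the integration by parts at infinity (using the finiteness of $\int_M e^{-\alpha d}$ together with the weighted short-time bounds to ensure all boundary terms vanish) and track the $\varepsilon$-dependence in the error terms so that strict stability genuinely dominates; the localization step is comparatively routine given the stated Shi--Simon estimates.
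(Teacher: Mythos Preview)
Your overall strategy---establish exponential decay of an $L^2$ energy for $\hat h$ using strict stability, then upgrade to pointwise $C^k$ bounds by local parabolic regularity---matches the paper's. The difference, and the gap in your proposal, lies in how the $L^2$ step is made rigorous on the noncompact manifold. You propose to differentiate the \emph{global} quantity $\int_M |\hat h(t)|^2\,d\mu_{g_0}$ directly, integrate by parts, and invoke strict stability; for this you need $\hat h(t)\in H^1(M,g_0)$ and vanishing of boundary terms at infinity. Your justification appeals to ``weighted short-time bounds,'' but this lemma assumes only $\|\hat h(t)\|_{C^0}\le\varepsilon$ and $\int_M|\hat h(0)|^2<\infty$---no spatial weight on the initial data. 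Shi--Simon gives $|\nabla\hat h(t)|\le C/\sqrt t$ pointwise, and the finiteness of $\int_M e^{-\alpha d}$ is a property of $(M,g_0)$ alone; neither forces $\nabla\hat h(t)\in L^2$ on a manifold of exponential volume growth. So the integration-by-parts step, and the applicability of the stability inequality to $\hat h$ itself, are not justified by the stated hypotheses.

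The paper resolves exactly this issue by working not with the global $L^2$ norm but with the \emph{Gaussian-weighted} energy
\[
J(s)=\int_M |\hat h(s,y)|^2\,e^{\xi(y,s)}\,d\mu_{g_0}(y),\qquad \xi(y,s)=-\frac{d_r(y,x)^2}{(2-C_0\varepsilon)(t-s)},
\]
following the trick of Li--Yin. The super-exponential decay of $e^\xi$ dominates the exponential volume growth, so all integrations by parts are automatically valid and strict stability may be applied to $u=e^{\xi/2}\hat h$ (which lies in $H^1$ for free). The choice of $\xi$ is tuned so that $\partial_s\xi+\tfrac12\hat g^{pq}\nabla_p\xi\nabla_q\xi\le 0$, which kills the extra terms produced by the weight; one then obtains $J'(s)\le -2\lambda_\varepsilon J(s)$ and hence $J(s)\le e^{-2\lambda_\varepsilon s}\int_M|\hat h(0)|^2$. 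Since $\xi\equiv 0$ on $B(x,r)$, this bounds the \emph{local} $L^2$ norm on $B(x,r)$ by the decayed global initial energy, and the De~Giorgi--Nash--Moser step proceeds as you describe. In short: you identified the right obstacle, but your proposed fix relies on decay you do not have; the Gaussian localization is the device that closes the argument under the lemma's actual hypotheses.
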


\begin{proof}
By direct computation, the evolution equation for $\hat{h}$ is given by (cf. Lemma 2.1 in \cite{Sh1989})
\begin{align}\label{evolution equation of metric difference}
    \frac{\partial}{\partial t} \hat{h} = \nabla_{p} (\hat{g}^{pq}\nabla_q \hat{h}) + 2\mathrm{Rm}(\hat{h}) + (\hat{g}^{-1} - g^{-1}) \ast \mathrm{Rm} \ast \hat{h} + \hat{g}^{-1} \ast \hat{g}^{-1}\ast \nabla \hat{h} \ast \nabla \hat{h},
\end{align}
where $\mathrm{Rm}(\hat{h})_{ij} := R_{ikjl}\hat{h}^{kl}$ and the notation, $A*B$, denotes linear combinations of terms formed by contraction of two tensors, $A$ and $B$, with respect to $g_0$. Following \cite{LY2010}, for points $x, y \in M$ and parameters $t, s, r > 0$, define the auxiliary function
\begin{align}
    \xi(s, t, y, x) = - \frac{ \left( d_r(y, x) \right)^2}{(2 - C_0 \varepsilon)(t - s)}, 
\end{align}
where 
\begin{align*}
    d_r(y, x) = 
    \begin{cases}
        0, & \text{if } \mathrm{dist}(y, x) \leq r \\
        \mathrm{dist}(y, x), & \text{otherwise} 
    \end{cases},
\end{align*}
and $C_0$ is a constant such that 
\begin{align}
    \frac{\partial}{\partial s} \xi + \frac{1}{2} \hat{g}^{pq}(\nabla_p \xi) \cdot (\nabla_q \xi) \leq 0.
\end{align}
Here, the covariant derivative is taken for the variable $y$. Let 
\begin{align}
    J(s, t, x) := \int_{M} |\hat{h}(s, y)|^2\cdot e^{\xi(s, t, y, x)} d\mu_{g_0}(y).
\end{align}
Since $\int_M e^{-\alpha d(x, p_0)} d\mu_{g_0} < +\infty$, the above $J(s, t, x) < +\infty$ for any fixed $ t > s \geq 0$ and $x\in M$. Taking the derivative for $J$ with respect to $s$, we obtain
\begin{align}
    \frac{\partial}{\partial s} J = \int_M 2 \hat{h} \frac{\partial \hat{h}}{\partial s} \cdot e^\xi + | \hat{h} |^2 \cdot \frac{\partial \xi}{\partial s} e^\xi d\mu_{g_0}. 
\end{align}
Combining with the evolution equation of $\hat{h}$ \eqref{evolution equation of metric difference}, we derive
\begin{equation}
    \begin{split}
        \frac{\partial}{\partial s} J &\leq 
        -2\int_M \hat{g}^{pq} \nabla_p (e^{\frac{\xi}{2}}\hat{h}) \cdot \nabla_p (e^{\frac{\xi}{2}}\hat{h}) d\mu_{g_0} 
        + 4 \int_{M}\mathrm{Rm}(\hat{h}, \hat{h}) e^{\xi} d\mu_{g_0}\\
        &+ \int_M \left(\frac{\partial}{\partial s}\xi + \frac{1}{2}\hat{g}^{pq}(\nabla_p \xi)(\nabla_q \xi) \right) \cdot |\hat{h}|^2 e^{\xi} d\mu_{g_0} + 2\|\hat{g}^{-1} - g^{-1} \|_{C^{0}} \cdot\|\mathrm{Rm}\|_{C^{0}} \cdot J\\
        &+ 2\cdot \|\hat{h}\|_{C^{0}}\cdot \|\hat{g}^{-1}\|_{C^{0}}\cdot \int_{M}|\nabla(e^{\frac{\xi}{2}}\hat{h})|^2d\mu_{g_0}\\
        &\leq -(2 - C(n)\varepsilon) \int_M \big(|\nabla (e^{\frac{\xi}{2}}\hat{h})|^2 - 2\mathrm{Rm}(\hat{h}, \hat{h})e^{\xi}\big)d\mu_{g_0}+C(n, k_0) \cdot \varepsilon\cdot J\\
        &\leq \big(-(2 - C(n)\varepsilon) \lambda_0 + C(n, k_0)\varepsilon \big)J,
    \end{split}
\end{equation}
where $\mathrm{Rm}(\hat{h}, \hat{h}) = R_{ijkl} \hat{h}^{ik} \hat{h}^{jl}$ and $\lambda_0$ is defined in  \Cref{strictly stable}. Let $\lambda_{\varepsilon} = ((2 - C(n)\varepsilon) \lambda_0 - C(n, k_0)\varepsilon)$ and take $\varepsilon > 0$ small enough such that $\lambda_\varepsilon > 0$, we obtain
\begin{equation}\label{formla 1}
    \begin{split}
        J(s, t, x) \leq& e^{-2\lambda_{\varepsilon} s} J(0, t, x)\\
        =& e^{-2\lambda_{\varepsilon} s} \int_{M} |\hat{h}(0, y)|^2\exp \left(-\frac{\left(d_r(y, x) \right)^2}{(2 - C_0\varepsilon)t} \right)d\mu_{g_0}(y)\\
        \leq& e^{-2\lambda_{\varepsilon} s} \int_M|\hat{h}(0, y)|^2 d\mu_{g_0}(y).
    \end{split}
\end{equation}
On the other hand, by direct computation, $|\hat{h}|^2$ satisfies
\begin{equation}
    \begin{split}
        \frac{\partial }{\partial t} |\hat{h}|^2 \leq&\; \hat{g}^{pq} \nabla_p \nabla_q |\hat{h}|^2 - (2 - 2|\hat{g}^{-1} - g^{-1}| - |\hat{g}^{-1}|^2 \cdot |\hat{h}|)\cdot |\nabla \hat{h}|^2 + (2 + |\hat{g}^{-1} - g^{-1}|)k_0\cdot |\hat{h}|^2\\
        \leq&\; \hat{g}^{pq}\nabla_p \nabla_q |\hat{h}|^2 - (2 - 2C(n) \varepsilon - C(n, \varepsilon)\varepsilon)\cdot |\nabla \hat{h}|^2 + (2 + C(n) \varepsilon)\cdot k_0\cdot |\hat{h}|^2.
    \end{split}
\end{equation}
Modifying $\varepsilon > 0$ such that $2 - 2C(n) \varepsilon - C(n, \varepsilon)\varepsilon > 0$, we have that 
\begin{align}
    \frac{\partial}{\partial t} |\hat{h}|^2 \leq \hat{g}^{pq} \nabla_p \nabla_q |\hat{h}|^2 + (2 + C(n)\varepsilon)\cdot k_0 \cdot |\hat{h}|^2.
\end{align}
Thus, by the De Giorgi-Nash-Moser iteration and \eqref{formla 1}, for any $(t, x)\in (0, T) \times M$, we have
\begin{equation}\label{De Giorgi-Nash-Moser iteration}
    \begin{split}
        \sup_{(t^{\prime}, x^{\prime})\in[t - \frac{r^2}{4}, t]\times B(x, \frac{r}{2})}|\nabla^{i}\hat{h}(t^{\prime}, x^{\prime})|\leq&\; C(i, k_0, v_0, r)\int_{t - r^2}^{t} \int_{B(x,r)}|\hat{h}(s, y)|^2 d\mu_{g_0}(y) ds\\  
        \leq&\; C(i, k_0, v_0, r) \int_{t - r^2}^{t}J(s, t, x) ds\\
        \leq&\; C(i, k_0, v_0, r)\int_{t - r^2}^{t} e^{-2\lambda_{\varepsilon} s}ds\cdot \int_{M}|\hat{h}(0, y)|^2d\mu_{g_0}(y)\\
        \leq&\; C(i, k_0, v_0, r)e^{-2\lambda_{\varepsilon} t}\int_{M} |\hat{h}(0, y)|^2 d\mu_{g_0}(y).
    \end{split}
\end{equation}
Then, the lemma follows. 
\end{proof}

By a standard contradiction argument, long-time existence follows.
\begin{lemma}\label{long time existence without weights}
    Let $(M^n, g_0)$ be a strictly stable Einstein manifold satisfying
    \begin{itemize}
        \item $\|\mathrm{Rm}\|_{C^0} \leq k_0$, for some $k_0 > 0$;
        \item $\displaystyle \inf_{x\in M} \mathrm{Vol}(B(x, 1), g_0) \geq v_0$, for some $v_0 > 0$;
        \item $\displaystyle \int_{M} e^{-\alpha d(x)} d\mu_{g_0}(x) < +\infty$, for some $\alpha > 0$.
    \end{itemize}
    Then, there exist sufficiently small $\varepsilon_0 >0$, $\varepsilon_1 > 0$ and $\lambda_{\varepsilon_0} > 0$ such that for any Riemannian metric $g$ on $M$, satisfying that 
    \begin{align*}
        \|g - g_0\|_{C^{0}} \leq \varepsilon_0 \quad \text{and} \quad \int_{M}|g - g_0|^2 d\mu_{g_0} \leq \varepsilon_1, 
    \end{align*}
    the solution, $\hat{g}(t, \cdot)$, to the NRDF \eqref{NRDF}, starting at $g(\cdot)$, exists for all the time, satisfying that 
    \begin{align}\label{long time existence with weight result 1}
        \|\hat{g}(t, \cdot) - g_0(\cdot)\|_{C^{i}} \leq C \varepsilon_1 e^{-\lambda_{\varepsilon_0} t}, \quad \text{for any }\; t\in [t_0, +\infty), 
    \end{align}
    where $t_0 > 0$ is an arbitrary given number and C is a constant depending on $i, k_0, v_0, t_0$. Moreover, $\hat{g}(t)$ exponentially converges to $g_0$ as $t\rightarrow +\infty$. 
\end{lemma}
\begin{proof}
    By \Cref{local L^2 to C^0 estimate}, following the contradiction arguments in \cite{QSW2013, Ye1993}, the lemma follows.
\end{proof}

\section{Long-Time Behavior of the NRDF}\label{sec: longtime}

\subsection{Spatial Decay Estimates}
As shown in \cite{HJS2016, QSW2013}, for a conformally compact background metric, the short-time evolution of the NRDF \eqref{NRDF} preserves the spatial decay of the initial metric relative to the background metric. The following lemma follows directly from their arguments and shows that their results also hold for certain other manifolds. Again, define $\hat{h}(t,x) := \hat{g}(t,x) - g_0(x)$ and, when no ambiguity arises, write $\hat{h}$ for brevity.

\begin{lemma}\label{short time existence with weight}
    Let $(M^n, g_0)$ be a complete manifold with a pole $p_0 \in M$, satisfying $\|\nabla^{i}\mathrm{Rm}\|_{C^0}\leq k_i$ for some constants $k_i > 0$ with $i = 0, 1, 2$. Then, for any $\delta > 0$, there exist positive constants $\varepsilon_0(n, \delta)$, $T(n, k_0)$, and $C(\varepsilon_0, n, k_0, k_1, k_2, \delta)$ such that, for any Riemannian metric $g$ on $M$, if
    \begin{align*}
        \| e^{\delta d}(g - g_0)\|_{C^1} \leq \varepsilon_0, 
    \end{align*}
    there exists a solution, $\hat{g}(t,\cdot)$, to the NRDF \eqref{NRDF}, starting at $g(\cdot)$, for $t\in [0, T]$, satisfying that 
    \begin{equation}\label{result of short time existence with weight}
        \begin{split}
            &\|e^{\delta d(\cdot)}(\hat{g}(t, \cdot) - g_0(\cdot))\|_{C^1} \leq C( \varepsilon_0, n, k_0, k_1, k_2, \delta),\quad \text{for}\; t\in [0, T]\;\; \text{and}\\
            &\|e^{\delta d(\cdot)}\nabla^2 (\hat{g}(t, \cdot) - g_0(\cdot))\|_{C^0} \leq \frac{C(\varepsilon_0, n, k_0, k_1, k_2, \delta)}{\sqrt{t}},\quad \text{for } \; t \in (0, T], 
        \end{split}
    \end{equation}
    where $d(x)$ is the distance function between $x\in M$ and the pole $p_0$ with respect to the background metric $g_0$.
\end{lemma}
\begin{proof}
 According to the results of \cite{Sh1989, Si2002}, there exists $T = T(n, k_0) > 0$, such that the solution to the NRDF~\eqref{NRDF}, $\hat{g}(t,\cdot)$, with initial data $g(\cdot)$,  exists on $t \in [0, T]$ and satisfies 
\begin{equation}\label{extra2}
    \begin{split}
        &\|\hat{h}\|_{C^{1}} \leq \varepsilon_1(\varepsilon_0, n, k_0, k_1),\quad \text{for }\; t\in [0, T]\;\;\; \text{ and} \\
        &\|\nabla^{2}\hat{h}\|_{C^{0}} \leq \frac{\varepsilon_2(\varepsilon_0, n, k_0, k_1, k_2)}{\sqrt{t}},\quad \text{for }\; t\in (0, T],
    \end{split}
\end{equation}
where $\varepsilon_1, \varepsilon_2 > 0$ and $\lim_{\varepsilon_0\rightarrow 0} \varepsilon_1 = 0$, $\lim_{\varepsilon_0 \rightarrow 0} \varepsilon_2 = 0$. Following the approach in \cite{HJS2016,QSW2013}, we denote
\begin{align}
    \varphi := |e^{\delta d} \hat{h}|^2 + |e^{\delta d}\nabla \hat{h}|^2 + t|e^{\delta d}\nabla^2 \hat{h}|^2.
\end{align}
Since the distance function $d(x)$ is not smooth at $p_0$, instead of working on $(0, T]\times M$, we restrict our analysis to $(0, T]\times \Omega_1$, where $\Omega_{1}: = \{x\in M\;|\; d(x) \geq 1\}$. Then the evolution inequality satisfied by $\varphi$ on $(0, T]\times \Omega_1$ is
\begin{equation}
    \begin{split}
        \frac{\partial}{\partial t} \varphi 
        \leq&\; \hat{g}^{pq} \nabla_p \nabla_q \varphi - 2\delta\cdot \hat{g}^{pq}\cdot (\nabla_p d)\cdot (\nabla_q \varphi)\\
        &-\bigg[2 - 2C(n)\varepsilon_1 - C(\sup_{x\in \Omega_1}|\nabla^2 d(x)|, \delta, \|\hat{g}\|_{C^0})\frac{\varepsilon_1^2}{2} - C(\varepsilon_1, k_0, k_1, k_2)\frac{\varepsilon_1^2}{2}\\
        &-C(\delta, \varepsilon_1, k_0, k_1, k_2)(\varepsilon_1 + \varepsilon_2)\bigg]\cdot (|e^{\delta d}\nabla \hat{h}|^2 + |e^{\delta d}\nabla^2 \hat{h}|^2 + t|e^{\delta d}\nabla^3\hat{h}|^2)\\
        &+\bigg[ C(\sup_{x\in\Omega_{1}}|\nabla^2 d(x)|, \varepsilon_1, \delta)\frac{1}{2\varepsilon_1^2} + C(\varepsilon_1, k_0, k_1, k_2)\frac{1}{2\varepsilon_1^2} + C(\delta, \varepsilon_1, k_0, k_1, k_2) \bigg] \varphi.
    \end{split}
\end{equation} 
By the Hessian comparison theorem, 
\begin{align*}
    \sup_{x\in \Omega_1}|\nabla^2 d(x)| \leq C(k_0),\quad \text{for }\; x \in \Omega_1. 
\end{align*}
Choosing $\,\varepsilon_0 > 0$ sufficiently small so that
\[
2 - 2C(n)\varepsilon_1 - C(|\nabla^2 d|, \delta, |\hat{g}|)\frac{\varepsilon_1^2}{2} - C(\varepsilon_1, k_0, k_1, k_2)\frac{\varepsilon_1^2}{2} - C(\delta, \varepsilon_1, k_0, k_1, k_2)(\varepsilon_1 + \varepsilon_2) > 0,
\]
we obtain
\[
\frac{\partial}{\partial t} \varphi \leq \hat{g}^{pq} \nabla_p \nabla_q \varphi - 2\delta\, \hat{g}^{pq} (\nabla_p d)(\nabla_q \varphi) + C(\delta, \varepsilon_1, k_0, k_1, k_2)\varphi,
\]
for all $(t,x) \in (0,T] \times \Omega_1$. Define
\[
\tilde{\varphi} := e^{-C(\delta, \varepsilon_1, k_0, k_1, k_2)t} \, \varphi.
\]
Then,
\[
\frac{\partial}{\partial t} \tilde{\varphi} \leq \hat{g}^{pq} \nabla_p \nabla_q \tilde{\varphi} - 2\delta \, \hat{g}^{pq} (\nabla_p d)(\nabla_q \tilde{\varphi}).
\]
By the maximum principle (Theorem 4.3 in \cite{QSW2013}, originally due to \cite{LT1992}), we obtain
\begin{equation}\label{extra3}
    \begin{split}
        \sup_{(t, x)\in(0, T]\times\Omega_{1}} \tilde{\varphi}(t, x)
        &\leq \max\Big\{\sup_{x\in \Omega_{1}} \tilde{\varphi}(0, x),\; \sup_{(t, x)\in (0, T]\times\partial \Omega_{1}} \tilde{\varphi}(t, x)\Big\}. 
    \end{split}
\end{equation}
In addition, 
\begin{align}\label{extra4}
    \sup_{x\in\Omega_{1}} \tilde{\varphi}(0, x)
    \leq \|\tilde{\varphi}(0, \cdot)\|_{C^0}
    \leq \|e^{\delta d}(g - g_0)\|_{C^1}
    \leq \varepsilon_0,
\end{align}
and
\begin{equation}\label{extra5}
    \begin{split}
        \sup_{(t, x) \in (0, T] \times \partial \Omega_{1}} \tilde{\varphi}(t, x)
        &\leq e^{2\delta} \sup_{(t, x)\in (0, T]\times \partial\Omega_1} \big(|\hat{h}|^2 + |\nabla \hat{h}|^2 + t|\nabla^2 \hat{h}|^2\big) \\
        &\leq C(\varepsilon_0, n, k_0, k_1, k_2, \delta),
    \end{split}
\end{equation}
where the last inequality of the above follows from \eqref{extra2}. Therefore, according to \eqref{extra3}, \eqref{extra4} and \eqref{extra5}, we obtain
\begin{align}\label{extra7}
    \sup_{(t, x)\in (0, T]\times \Omega_1} \varphi(t, x) \leq \sup_{(t, x)\in (0, T]\times \Omega_1}e^{C(\delta, \varepsilon_1, k_0, k_1, k_2) t}\tilde{\varphi}(t, x)\leq C(\varepsilon_0, n, k_0, k_1, k_2, \delta, T(n, k_0)).
\end{align}
On the other hand, by (\ref{extra2}), 
\begin{equation}\label{extra6}
    \begin{split}
        \sup_{(t, x)\in (0, T]\times (M\setminus\Omega_1)} \varphi(t, x)
        &\leq e^{2\delta} \sup_{(t, x)\in (0, T]\times (M\setminus \Omega_1)} \big(|\hat{h}|^2 + |\nabla \hat{h}|^2 + t|\nabla^2 \hat{h}|^2\big) \\
        &\leq C(\varepsilon_0, n, k_0, k_1, k_2, \delta).
    \end{split}
\end{equation}
Combining \eqref{extra7} and \eqref{extra6} yields the lemma.
\end{proof}

\begin{remark}\label{cartan hadamard einstein condition}
    In what follows, the background manifold is assumed to be a Cartan-Hadamard Einstein manifold satisfying $\|\mathrm{Rm}\|_{C^0} \leq k_0$ for some $k_0 > 0$. It therefore suffices to establish
    \begin{align}\label{long time existence with weight formula 6}
        \|\nabla \mathrm{Rm}\|_{C^0} \leq k_1
        \quad \text{and} \quad
        \|\nabla^2 \mathrm{Rm}\|_{C^0} \leq k_2,
    \end{align}
    for suitable constants $k_1, k_2 > 0$. In fact, under the Cartan-Hadamard assumption, $(M, g_0)$ has positive injectivity radius. Hence, by Theorem 11.4.3(Anderson (1990)) of \cite{Pp2016}, there exists $r_0 > 0$ such that for every $p \in M$, one can construct harmonic coordinate charts $\varphi_p: B(p, r_0) \to U \subset M$ in which $g_0 \in C^{1,\alpha}(B(p, r_0))$ with respect to the Euclidean metric. Since $g_0$ is Einstein, elliptic regularity implies that $g_0 \in C^m(B(p, r_0))$ for all $m > 1$, which in turn yields \eqref{long time existence with weight formula 6}. Therefore, the above lemma applies even without explicitly assuming boundedness of $\|\nabla \mathrm{Rm}\|_{C^0}$ and $\|\nabla^2 \mathrm{Rm}\|_{C^0}$ for Cartan-Hadamard Einstein manifolds with $\|\mathrm{Rm}\|_{C^0}\leq k_0$.
\end{remark}

By imposing suitable additional conditions, the spatial decay of the initial metric is preserved under the NRDF \eqref{NRDF} in the long-time sense as well.
\begin{lemma}\label{long time existence with short time weight}
    Let $(M^n, g_0)$ be a strictly stable Cartan-Hadamard Einstein manifold with $\|\mathrm{Rm}\|_{C^0}\leq k_0$, for some $k_0 > 0$, and suppose $\tau_0$ defined in \eqref{two important quantities} exists. Then, for any $\delta > \tau_0$, there exists $\varepsilon_0 > 0$, such that for any Riemannian metric $g$ on $M$, if 
    \begin{align}\label{long time existence with short time weight condition 1}
        \|e^{\delta d}(g - g_0)\|_{C^1} \leq \varepsilon_0, 
    \end{align}
    the solution, $\hat{g}(t, \cdot)$, to the NRDF \eqref{NRDF}, starting at $g(\cdot)$, exists for all the time, satisfying that 
    \begin{align}\label{long time existence with short time weight result}
        |\hat{g}(t, x) - g_0(x)| \leq C(n, \varepsilon_0, \tau_0, k_0, \delta, T) e^{-\delta d(x)}, \quad \text{ on }\; [0, T]\times M,
    \end{align}
    for any $T > 0$. 
\end{lemma}
\begin{proof}
    Taking a normal coordinate at some fixed point, $p_0 \in M$, as in \Cref{existence of relative volume} with $\tau_1 = \frac{\delta + \tau_0}{2} \in (\tau_0, \delta)$, we obtain
    \begin{align}\label{long time existence with short time weight formula 1}
        \int_{M} e^{-\delta d(x)} d\mu_{g_0}(x) < C(n, k_0)\int_{0}^{+\infty} e^{-(\delta - \tau_1)r}dr < C(n, \tau_0, k_0, \delta).
    \end{align}
    Furthermore, by \eqref{long time existence with short time weight condition 1} and \eqref{long time existence with short time weight formula 1}, we obtain 
    \begin{align}\label{long time existence with short time weight formula 2}
        \int_M |\hat{h}(0, x)|^2 d\mu_{g_0}(x) \leq \varepsilon_0 \cdot \int_M e^{-2\delta d(x)} d\mu_{g_0}(x) = C(n, \tau_0, k_0, \delta)\cdot \varepsilon_0.
    \end{align}
    In addition, under the Cartan-Hadamard assumption, the volume of unit balls is uniformly lower bounded. According to \Cref{long time existence without weights} and \eqref{extra2}, taking $\varepsilon_0 > 0$ small enough, the solution to the NRDF \eqref{NRDF}, $\hat{g}(t, \cdot)$, with initial data $g(\cdot)$, exists for $t\in [0, +\infty)$ and there exist $\lambda_{\varepsilon_0} > 0$ and $C(n, \tau_0, k_0, \delta) > 0$ such that 
    \begin{align}\label{long time existence with short time weight formula 3}
        |\nabla^{i}(\hat{g}(t, x) - g_0(x))| < C(n, \tau_0, k_0, \delta)\varepsilon_0 e^{-\lambda_{\varepsilon_0} t}, \quad \text{for }\; (t, x)\in [0, +\infty)\times M,\; i = 0, 1. 
    \end{align}
    By direct computation, the evolution equation of $|e^{\delta d}\hat{h}|^2$ is
    \begin{equation}\label{long time existence with short time weight formula 4}
        \begin{split}
        \frac{\partial}{\partial t} |e^{\delta d}\hat{h}|^2 
        \leq&\; \hat{g}^{pq} \nabla_p \nabla_q \left(  |e^{\delta d} \hat{h}|^2 \right) - 2 \tau \cdot \hat{g}^{pq} \cdot (\nabla_p d) \cdot \nabla_q  \left( |e^{\delta d}\hat{h}|^2 \right)\\
        &- A\cdot|\nabla (e^{\delta d}\hat{h})|^2 + 2B\cdot |e^{\delta d}\hat{h}|^2,
        \end{split}
    \end{equation}
    where
    \begin{align*}
        A =&\; 2 -2|\hat{g}^{-1} - g^{-1}| - C(n, \delta, k_0)\cdot|\hat{g}^{-1}|^2\cdot |\hat{h}|,\\ 
        B =&\; \delta^2 - \tau_0 \delta + 2\gamma + C(n, \delta, k_0)\cdot |\hat{g}^{-1} - g^{-1}| + C(n)\cdot |\hat{g}^{-1}|^2 \cdot |\hat{h}| + \delta (\tau_0 - \Delta_g d).
    \end{align*}
    By \eqref{long time existence with short time weight formula 3}, there exists $C(n, \varepsilon_0, \tau_0, k_0, \delta) > 0$ such that
    \begin{align}\label{long time existence with short time weight formula 5}
        B < C(n,\varepsilon_0,\tau_0, k_0, \delta)
    \end{align}
    on $[0, +\infty) \times \Omega_1$, where $\Omega_1 := \{x\in M \;|\; d(x) \geq 1\}$. Moreover, by taking $\varepsilon_0 > 0$ further smaller, we can guarantee that $A > 0$. Now, define
    \begin{align*}
        \varphi(t, x) := e^{-2C(n,\varepsilon_0,\tau_0, k_0, \delta)t}|e^{\delta d(x)}\hat{h}(t, x)|^2. 
    \end{align*}
    Combining with \eqref{long time existence with short time weight formula 4}, \eqref{long time existence with short time weight formula 5} and $A > 0$, we derive that 
    \begin{align}
        \frac{\partial}{\partial t} \varphi(t, x)
        \leq&\; \hat{g}^{pq} \nabla_p \nabla_q \left(\varphi(t, x) \right) - 2 \tau \cdot \hat{g}^{pq} \cdot (\nabla_p d) \cdot \nabla_q  \left(\varphi(t, x) \right) 
    \end{align}
    on $[0, +\infty) \times \Omega_1$. By the maximum principle (Theorem 4.3 in \cite{QSW2013}), we obtain that 
    \begin{equation}\label{long time existence with short time weight formula 6}
        \begin{split}
            \sup_{(t, x)\in [0, +\infty)\times\Omega_1}\varphi(t, x) \leq \max\Big\{\sup_{x\in \Omega_1}\varphi(0, x), \sup_{(t, x)\in [0, +\infty)\times\partial \Omega_1} \varphi(t, x) \Big\}. 
        \end{split}
    \end{equation}
    By \eqref{long time existence with short time weight condition 1}, 
    \begin{align}\label{long time existence with short time weight formula 7}
        \sup_{x\in \Omega_1} \varphi(0, x) \leq \|e^{\delta d}(g - g_0)\|_{C^0} \leq \varepsilon_0. 
    \end{align}
    By \eqref{long time existence with short time weight formula 3}, 
    \begin{equation}\label{long time existence with short time weight formula 8}
        \begin{split}
            \sup_{(t,x)\in [0, +\infty) \times \partial \Omega_1} \varphi(t, x) \leq&\; \sup_{(t, x)\in [0, +\infty) \times \partial \Omega_1} e^{-2C(n,\varepsilon_0, \delta,\tau_0, k_0)t}|e^{\delta d(x)}\hat{h}(t, x)|^2\\
            \leq&\; \sup_{(t, x)\in [0, +\infty)\times M}e^{\delta} |\hat{h}(t, x)|^2\leq C(n, \varepsilon_0, \tau_0, k_0, \delta).
        \end{split}
    \end{equation}
    Combining \eqref{long time existence with short time weight formula 6}, \eqref{long time existence with short time weight formula 7} and \eqref{long time existence with short time weight formula 8}, we obtain
    \begin{equation}\label{long time existence with short time weight formula 9}
        \sup_{(t, x)\in [0, +\infty) \times \Omega_1} \varphi(t, x) \leq C(n, \varepsilon_0, k_0, \tau_0, \delta). 
    \end{equation}
    In addition, by \eqref{long time existence with short time weight formula 3}, 
    \begin{equation}\label{long time existence with short time weight formula 10}
        \begin{split}
            \sup_{(t, x)\in [0, +\infty) \times (M\setminus \Omega_1)} \varphi(t, x) \leq&\; \sup_{(t, x)\in [0, +\infty) \times (M\setminus \Omega_1)} e^{-2C(n, \varepsilon_0, \delta, \tau_0, k_0)t}|e^{\delta d(x)}\hat{h}(t, x)|^2\\
            \leq&\; e^{\delta}\cdot \sup_{(t, x)\in [0, +\infty) \times (M\setminus \Omega_1)} |\hat{h}(t, x)|^2\\
            \leq&\; e^{\delta}\cdot \sup_{(t, x)\in [0. +\infty) \times M} |\hat{h}(t, x)|^2 \leq C(n, \varepsilon_0, k_0, \tau_0, \delta).
        \end{split}
    \end{equation}
    By \eqref{long time existence with short time weight formula 9} and \eqref{long time existence with short time weight formula 10}, we obtain
    \begin{align*}
        \sup_{(t, x)\in [0, +\infty) \times M}\varphi(t, x) \leq C(n,\varepsilon_0, k_0, \tau_0, \delta), 
    \end{align*} 
    which implies
    \begin{align}\label{long time existence with short time weight formula 11}
        |\hat{g}(t, x) - g_0(x)| \leq C(n,\varepsilon_0, k_0, \tau_0, \delta) e^{2 C(n,\varepsilon_0, k_0, \tau_0, \delta)t} e^{-\delta d(x)}.
    \end{align}
    \eqref{long time existence with short time weight result} follows from \eqref{long time existence with short time weight formula 11}. 
\end{proof}
Furthermore, the spatial decay and the time decay can be separated under stronger conditions. 
\begin{lemma}\label{long time existence with weight}
    Let $(M^n, g_0)$ be a strictly stable Cartan-Hadamard Einstein manifold with $\|\mathrm{Rm}\|_{C^0} \leq k_0$, for some $k_0 > 0$, and suppose that $\tau_0$ defined in \eqref{two important quantities} exists.
    Then, for any 
    \begin{align}
        \tau \in \left(\frac{\tau_0}{2} - \sqrt{\frac{\tau_0^2}{4} - 2\gamma}, \frac{\tau_0}{2} + \sqrt{\frac{\tau_0^2}{4} - 2\gamma} \right) \quad \text{and} \quad \delta > \tau_0,
    \end{align}
    there exist $\varepsilon_0 > 0$ and $\lambda > 0$, such that for any Riemannian metric $g$ on $M$, if 
    \begin{align}\label{long time eistence with weight condition1}
        \|e^{\delta d}(g - g_0)\|_{C^{1}} \leq  \varepsilon_0,
    \end{align}
    the solution, $\hat{g}(t, \cdot)$, to the NRDF \eqref{NRDF}, starting at $g(\cdot)$, exists for all the time, satisfying that 
    \begin{align}\label{long time with weight result 1}
        |\nabla^i(\hat{g}(t,x) - g_0(x))| \leq C(n,\varepsilon_0,\tau_0, k_0, \delta) e^{-\lambda t}\cdot e^{-\tau d(x)},
    \end{align}
    for $t\in [0, +\infty)$, $i = 0, 1$, and 
    \begin{align}\label{long time with weight result 2}
        |\nabla^2(\hat{g}(t,x) - g_0(x))| \leq C(n,\varepsilon_0,\tau_0, k_0, \delta)(1 + \frac{1}{\sqrt{t}}) e^{-\lambda t}\cdot e^{-\tau d(x)},
    \end{align}
    for $t\in (0, +\infty)$.
\end{lemma}
\begin{proof}
    By direct computation, we obtain 
\begin{equation}\label{long time existence without weights formula 2}
    \begin{split}
        \frac{\partial}{\partial t} e^{2\lambda t}|e^{\tau d}\hat{h}|^2 
        \leq&\; \hat{g}^{pq} \nabla_p \nabla_q \left( e^{2\lambda t} |e^{\tau d} \hat{h}|^2 \right) - 2 \tau \cdot \hat{g}^{pq} \cdot (\nabla_p d) \cdot \nabla_q  \left( e^{2\lambda t} |e^{\tau d}\hat{h}|^2 \right)\\
        &- A\cdot e^{2\lambda t}|\nabla (e^{\tau d}\hat{h})|^2 + 2(B + \lambda)\cdot e^{2\lambda t} |e^{\tau d}\hat{h}|^2,
    \end{split}
\end{equation}
where 
\begin{align*}
    A =&\; 2 -2|\hat{g}^{-1} - g^{-1}| - C(n, \tau, k_0)\cdot|\hat{g}^{-1}|^2\cdot |\hat{h}|,\\ 
    B =&\; \tau^2 - \tau_0 \cdot \tau + 2\gamma + C(n, \tau, k_0)\cdot |\hat{g}^{-1} - g^{-1}| + C(n)\cdot |\hat{g}^{-1}|^2 \cdot |\hat{h}| + \tau (\tau_0 - \Delta_g d).
\end{align*}
As in \Cref{long time existence with short time weight}, we have  
\begin{align}
    \int_M |\hat{h}(0, x)|^2 d\mu_{g_0}(x) \leq \varepsilon_0 \cdot \int_M e^{-2\delta d(x)} d\mu_{g_0}(x) = C(n, \tau_0, k_0, \delta)\cdot \varepsilon_0.
\end{align}
By \Cref{long time existence without weights} and Cartan-Hadamard assumption, taking $\varepsilon_0 > 0$ small enough, the solution to the NRDF, $\hat{g}(t, \cdot)$, with initial data $g(\cdot)$, exists for $t\in [0, +\infty)$ and there exist $\lambda_{\varepsilon_0} > 0$ and $C(n, \delta, \tau_0, k_0, t_0) > 0$ such that 
\begin{align}\label{long time eistence with weight formula 1}
    |\nabla^{i}(\hat{g}(t, x) - g_0(x))|^2 < C(n, \tau_0, k_0, \delta, t_0)\varepsilon_0 e^{-2\lambda_{\varepsilon_0} t} \quad \text{for} \quad t\in [t_0, +\infty),\; i = 0, 1, 2. 
\end{align}
 On the other hand, for any
\begin{align}
    \tau \in \left(\frac{\tau_0}{2} - \sqrt{\frac{\tau_0^2}{4} - 2\gamma} ,\, \frac{\tau_0}{2} + \sqrt{\frac{\tau_0^2}{4} - 2\gamma} \right),
\end{align}
by choosing $\lambda > 0$ and $\varepsilon_0 > 0$ sufficiently small and $d_0 > 0$ sufficiently large, we are able to guarantee 
\begin{align*}
    A > 0, \quad \lambda + B < 0, \quad \text{and} \quad \lambda < \lambda_{\varepsilon_0},
\end{align*}
for $(t, x)\in [t_0, +\infty)\times\Omega_{d_0}$, where 
\begin{align*}
    \Omega_{d_0} := \{x\in M\; |\; d(x) > d_0\}.
\end{align*}
Thus, according to \eqref{long time existence without weights formula 2}, for $(t, x)\in [t_0, +\infty)\times\Omega_{d_0}$, we have
\begin{align}
    \frac{\partial}{\partial t} \left( e^{2\lambda t}|e^{\tau d}\hat{h}|^2 \right) 
    \leq \hat{g}^{pq} \nabla_p \nabla_q \left( e^{2\lambda t}|e^{\tau d} \hat{h}|^2 \right) - 2\tau \cdot \hat{g}^{pq} \cdot (\nabla_p d) \cdot \nabla_q \left( e^{2\lambda t} |e^{\tau d}\hat{h}|^2 \right).
\end{align}
By the maximum principle (Theorem 4.3 in \cite{QSW2013}) and \eqref{long time eistence with weight formula 1}, we obtain 
\begin{equation}\label{long time weight estimate 1}
    \begin{split}
        &\sup_{(t, x)\in[t_0, +\infty) \times \Omega_{d_0}} e^{2\lambda t}|e^{\tau d(x)}\hat{h}(t, x)|^2 \\
        \leq& \max\Big\{\sup_{x\in\Omega_{d_0}} |e^{\tau d(x)}\hat{h}(t_0, x)|^2, \sup_{(t, x)\in[t_0, +\infty)\times \partial\Omega_{d_0}}e^{2\lambda t}e^{2\tau d_0}|\hat{h}(t, x)|^2\Big\}\\
        \leq& \max\Big\{\sup_{x\in\Omega_{d_0}} |e^{\tau d(x)}\hat{h}(t_0, x)|^2, C(n, \delta, \tau_0, k_0, v_0, t_0)\sup_{(t, x)\in[t_0, +\infty)\times \partial\Omega_{d_0}}e^{2\lambda t}e^{2\tau d_0} e^{-2 \lambda_{\varepsilon_0} t}\Big\}.
    \end{split}
\end{equation}
Fix $t_0 < T(n,k_0)$, where $T(n,k_0)$ is as in \Cref{short time existence with weight}. Then, \Cref{short time existence with weight} yields
\begin{equation}\label{long time weight estimate 2}
    |\hat{h}(t_0,x)| \leq C(n,\tau,k_0) e^{-\tau d(x)}.
\end{equation}
Combining \eqref{long time eistence with weight formula 1}, \eqref{long time weight estimate 1} and \eqref{long time weight estimate 2} yields the zeroth-order covariant derivative part of \eqref{long time with weight result 1}. That is 
\begin{align}\label{long time existence with weight formula 4}
    |\hat{h}(t, x)| \leq C(n,\varepsilon_0,\tau_0, k_0, \delta) e^{-\lambda t}\cdot e^{-\tau d(x)}\;\; \text{for } t\in [0, +\infty). 
\end{align}
For the estimate of the first and second order covariant derivatives of $\hat{h}$, we argue as \eqref{De Giorgi-Nash-Moser iteration}. Under the Cartan-Hadamard condition, the volume of unit balls is uniformly lower bounded. Then, the De Giorgi-Nash-Moser iteration yields 
    \begin{align}\label{De Giorgi-Nash-Moser iteration 2} 
        |\nabla^{i}\hat{h}(t, x)|\leq\sup_{(t^{\prime}, x^{\prime})\in [t - \frac{r^2}{4}, t]\times B(x, \frac{r}{2})}|\nabla^i \hat{h}(t^{\prime}, x^{\prime})|\leq C(i, k_0, r)\cdot \sup_{(t^{\prime}, x^{\prime})\in [t - r^2, t]\times B(x, r)}|\hat{h}(t^{\prime}, x^{\prime})|. 
    \end{align}
Fixing $r < 2\sqrt{t_0}$, together with \eqref{long time existence with weight formula 4}, we obtain that 
\begin{align}\label{long time existence with weight formula 5}
    |\nabla^i(\hat{g}(t,x) - g_0(x))| \leq C(n,\varepsilon_0,\tau_0, k_0, \delta) e^{-\lambda t}\cdot e^{-\tau d(x)},\quad \text{for}\; t\in [t_0, +\infty)\; \text{ and }\; i = 0, 1, 2.
\end{align}
Combining \Cref{short time existence with weight} and \eqref{long time existence with weight formula 5} completes the proof.
\end{proof}

Next, we shall investigate the long-time behavior of $W$ in \eqref{NRDF}. 

\begin{lemma}\label{The evolution equation of the deTurck term}
    Let $(M^n, g_0)$ be a strictly stable Cartan-Hadamard Einstein manifold satisfying the same conditions in \Cref{long time existence with weight}. Then, for any 
    \begin{align}\label{extra1}
        \delta \in \left( 0, \frac{\tau_0}{2} + \sqrt{\frac{\tau_0^2}{4} - \frac{R_0}{n}} \right) \cap \left( \tau_0, \tau_0 + 2\sqrt{\frac{\tau_0^2}{4} - 2\gamma} \right),
    \end{align}
    there exist $\varepsilon_0 > 0$ and $\hat{\lambda} > 0$, such that for any Riemannian metric $g$ on $M$, if 
    \begin{align*}
         \|e^{\delta d}(g - g_0)\|_{C^1} \leq \varepsilon_0,
    \end{align*}
    the solution, $\hat{g}(t, \cdot)$, to the NRDF \eqref{NRDF}, starting at $g(\cdot)$, exists for all the time, satisfying that 
    \begin{align}\label{The evolution equation of the deTurck term result 1}
        |W(t, x)|\leq C(n,\varepsilon_0,\tau_0, k_0, \delta) e^{-\hat{\lambda} t} e^{-\delta d(x)}, 
    \end{align}
    for $t\in [0, +\infty)$, and
    \begin{align}\label{The evolution equation of the deTurck term result 2}
        |\nabla^iW(t, x)|\leq C(n,\varepsilon_0,\tau_0, k_0, \delta, i, t_0) e^{-\hat{\lambda} t} e^{-\delta d(x)},
    \end{align}
    for $t\in [t_0, +\infty)$, where $W^{i} = \hat{g}^{ab}(\hat{\Gamma}_{ab}^{i} - \Gamma_{ab}^{i})$ and $t_0 > 0$. 
\end{lemma}

\begin{proof}
The evolution equation of the $W^{i}$ under the NRDF \eqref{NRDF} is the following 
\begin{align}
    \frac{\partial}{\partial t} W^{i} = \Delta_{\hat{g}} W^{i} + \frac{R_0}{n}W^{i} + (\hat{Ric} - \frac{R_0}{n} \hat{g})_{pq} \hat{g}^{ip} \hat{g}^{qr}W_{r} + Q(\hat{h}),
\end{align}
where 
\begin{equation}
    \begin{split}
        Q(\hat{h}) 
        =&\; (\hat{\mathrm{Ric}} - \frac{R_0}{n}\hat{g}) * \hat{g}^{-1}* \hat{g}^{-1}* \hat{g}^{-1}*\nabla\hat{g} + \hat{g}^{-1} * \nabla\hat{g} * \nabla W\\
        &+ \hat{g}^{-1}* \hat{g}^{-1} * \nabla\hat{g} * \nabla\hat{g} * W,
    \end{split}
\end{equation}
$\hat{\mathrm{Ric}}$ is the Ricci curvature for the metric, $\hat{g}$, and $A*B$ denotes linear combinations of terms formed by contraction of two tensors, $A$ and $B$, with respect to $g_0$. Moreover, we have that 
\begin{equation*}
    \begin{split}
        \frac{\partial}{\partial t} e^{\hat{\lambda} t}e^{\delta d}|W| \leq&\; \Delta_{\hat{g}}  \left( e^{\hat{\lambda} t}e^{\delta d} |W| \right) - 2\delta \cdot \hat{g}^{pq}\cdot (\nabla_p d) \cdot \nabla_q \left( e^{\hat{\lambda} t}e^{\delta d}|W| \right) \\ 
        &+ \bigg[ \hat{\lambda} + \delta^2 - \tau_0 \delta + \frac{R_0}{n} + C(n, |\hat{g}^{-1}|, |\nabla\hat{g}|, |\nabla^2 \hat{g}|)(|\nabla\hat{g}| + |\nabla^2 \hat{g}|)\\
        &+ |\hat{\mathrm{Ric}} - \frac{R_0}{n}\hat{g}| + \delta\cdot (\tau_0 - \Delta_g d) + |\hat{g}^{-1} - g^{-1}|\cdot (\delta^2|\nabla d| + \delta|\nabla^2 d|) \bigg] \cdot e^{\hat{\lambda} t}e^{\delta d}|W|\\ 
        &+ C(n, |\hat{g}^{-1}|, |\nabla \hat{g}|)\cdot e^{\hat{\lambda}t}e^{\delta d} \cdot |\nabla \hat{g}|\cdot (|\nabla^2\hat{g}| + |\hat{\mathrm{Ric}} - \frac{R_0}{n}\hat{g}|).
    \end{split}
\end{equation*}
Together with the fact that 
\begin{align}
    |\hat{\mathrm{Ric}} - \frac{R_0}{n}\hat{g}| = |\hat{\mathrm{Ric}} - \mathrm{Ric}|
    \leq  C(n, |\nabla\hat{g}|, |\hat{g}^{-1} - g^{-1}|, |\hat{g}^{-1}|)\cdot (|\nabla \hat{g}| + |\nabla^2 \hat{g}|),  
\end{align}
we obtain
\begin{align}
    \frac{\partial }{\partial t} e^{\hat{\lambda} t} e^{\delta d}|W| \leq \Delta_{\hat{g}} \left( e^{\hat{\lambda} t}e^{\delta d}|W| \right) - 2\delta \cdot \hat{g}^{pq}\cdot (\nabla_p d) \cdot \nabla_q  \left( e^{\hat{\lambda} t}e^{\delta d}|W| \right) + A \cdot e^{\hat{\lambda} t} e^{\delta d}|W| + B,
\end{align}
where 
\begin{align*}
    A =&\; \hat{\lambda} +   \delta^2 - \tau_0 \delta + \frac{R_0}{n} + \delta\cdot (\tau_0 - \Delta_g d)\\
    &+ C(n, \delta, |\hat{g}^{-1}|, |\hat{g}^{-1} - g^{-1}|, |\nabla\hat{g}|, |\nabla^2 \hat{g}|, |\nabla^2 d|)\cdot(|\nabla\hat{g}|\\
    &+ |\nabla^2 \hat{g}| + |\hat{g}^{-1} - g^{-1}|)\cdot(|\nabla\hat{g}| + |\nabla^2 \hat{g}| + |\hat{g}^{-1} - g^{-1}|),\\
    B =&\; C(n, |\hat{g}^{-1}|, |\hat{g}^{-1} - g^{-1}|, |\nabla \hat{g}|)\cdot e^{\hat{\lambda} t}e^{\delta d} \cdot |\nabla \hat{g}|\cdot (|\nabla^2\hat{g}| + |\nabla \hat{g}|).
\end{align*}
On the other hand, we see if $\delta$ satisfies (\ref{extra1}), then,
\begin{align*}
    \frac{\delta}{2} \in \left( \frac{\tau_0}{2}, \frac{\tau_0}{2} + \sqrt{\frac{\tau_0^2}{4} - 2\gamma} \right) \subset \left( \frac{\tau_0}{2} - \sqrt{\frac{\tau_0^2}{4} - 2\gamma}, \frac{\tau_0}{2} + \sqrt{\frac{\tau_0^2}{4} - 2\gamma} \right).
\end{align*}
By applying \Cref{long time existence with weight} with $\tau = \frac{\delta}{2}$, we obtain $\varepsilon_0 > 0$ and $\lambda > 0$ such that if 
\begin{align*}
    \|e^{\delta d}(g - g_0)\|_{C^{1}} \leq \varepsilon_0,
\end{align*} 
then, there exists a solution \( \hat{g}(t, \cdot) \) to the NRDF \eqref{NRDF}, with initial data \( g(\cdot) \), for all \( t \in [0, +\infty) \), satisfying that  
\begin{align}
    |\nabla^{i}(\hat{g}(t, x) - g(x))| \leq C(n, \tau_0, k_0, \delta, t_0) \varepsilon_0 e^{-\lambda t} e^{-\frac{\delta}{2} d(x)},
\end{align}
for $t\in [t_0, +\infty)$ and $i = 0, 1, 2$, where $t_0$ is an arbitrary positive number. Thus, 
\begin{align}\label{The evolution equation of the deTurck term formula 1}
    B < C(n,\varepsilon_0,\tau_0, k_0, \delta, t_0)\cdot e^{(\hat{\lambda} - 2\lambda)t}, \quad \text{for }\; t\in [t_0, +\infty).
\end{align}
In addition, for
\begin{align*}
    \delta \in \left(0, \frac{\tau_0}{2} + \sqrt{\frac{\tau_0^2}{4} - \frac{R_0}{n}} \right) \cap \left(\tau_0, \tau_0 + 2\sqrt{\frac{\tau_0^2}{4} - 2\gamma} \right),
\end{align*}
we can always make $\varepsilon_0$ further smaller, $d_0 > 0$ sufficiently larger and $\hat{\lambda} > 0$ sufficiently small such that $A < 0$ for $x \in \Omega_{d_0} := \{x\in M\; | \; d(x) > d_0\}$ and $ \hat{\lambda} < \lambda$. Together with \eqref{The evolution equation of the deTurck term formula 1}, we obtain 
\begin{equation}\label{evolution equation of deturck term formula 20}
    \begin{split}
        \frac{\partial}{\partial t } e^{\hat{\lambda} t} e^{\delta d}|W| \leq&\; \Delta_{\hat{g}} \left( e^{\hat{\lambda} t}e^{\delta d}|W| \right) - 2\delta \cdot \hat{g}^{pq}\cdot (\nabla_p d) \cdot \nabla_q \left( e^{\hat{\lambda} t}e^{\tau d}|W| \right)\\
        &+  C(n,\varepsilon_0,\tau_0, k_0, \delta, t_0) e^{ - \hat{\lambda} t}, 
    \end{split}
\end{equation}
for $(t, x)\in [t_0, +\infty) \times \Omega_{d_0}$. Consider $\psi := e^{\hat{\lambda} t} e^{\delta d} |W| + \frac{ C(n,\varepsilon_0,\tau_0, k_0, \delta, t_0)}{\hat{\lambda}} e^{ - \hat{\lambda} t}$. Combining with \eqref{evolution equation of deturck term formula 20}, we derive
\begin{align}
    \frac{\partial}{\partial t} \psi \leq \hat{g}^{pq} \nabla_p \nabla_q \psi - 2\delta \cdot \hat{g}^{pq}\cdot (\nabla_p d)\cdot (\nabla_q \psi),\;\;\; \text{ on }\; [t_0, +\infty)\times \Omega_{d_0}.
\end{align}
By the maximum principle (Theorem 4.3 in \cite{QSW2013}), we obtain 
\begin{equation}\label{deTurck term estimate 1}
    \begin{split}
        \sup_{(t, x)\in [t_0, +\infty)\times \Omega_{d_0}} \psi(t, x) \leq&\; \max\Big\{\sup_{x\in \Omega_{d_0}} e^{\hat{\lambda} t_0 } e^{\delta d(x)}|W(t_0, x)|, \sup_{(t, x)\in [t_0, +\infty)\times \partial \Omega_{d_0}} e^{\hat{\lambda} t} e^{\delta d_0} |W(t,x)|\Big\}\\
        & + \frac{C(n, \varepsilon_0, \tau_0, k_0, \delta, t_0)}{\hat{\lambda}}.
    \end{split}
\end{equation}
On the other hand, by \Cref{long time existence with weight} with $\tau = \frac{\delta}{2}$, 
\begin{equation}\label{deTurck term formula 4}
    \begin{split}
         |W(t, x)| =&\; |\hat{g}^{-1} * \hat{g}^{-1} * \nabla\hat{g}| = C(n) |\hat{g}^{-1}|^2\cdot |\nabla \hat{g}|\\
        \leq&\; C(n,\varepsilon_0,\tau_0, k_0, \delta) e^{-\lambda t} e^{-\frac{\delta}{2}d(x)}, \;\;\; \text{ for }\; (t, x)\in  [0, +\infty)\times  M,
    \end{split}
\end{equation}
which implies that 
\begin{align}\label{The evolution equation of the deTurck term formula 2}
    \sup_{(t, x)\in [t_0, +\infty) \times \partial \Omega_{d_0}} e^{\hat{\lambda}t}e^{\delta d_0}|W(t, x)| \leq C(n,\varepsilon_0,\tau_0, k_0, \delta)e^{(\hat{\lambda} - \lambda)t} e^{\frac{\delta}{2}d_0}\leq C(n,\varepsilon_0,\tau_0, k_0, \delta). 
\end{align}
Fix $t_0 > 0$ with $t_0 < T(n, k_0)$, where $T(n, k_0)$ is the number as in \Cref{short time existence with weight}. Then, \Cref{short time existence with weight} implies that  
\begin{align}\label{deTurck Term estimate 2}
    \sup_{(t, x)\in [0, t_0]\times M} e^{\hat{\lambda} t} e^{\delta d(x)}|W(t, x)| = \sup_{(t, x)\in [0, t_0]\times M} e^{\hat{\lambda} t } e^{\delta d(x)} |\hat{g}^{-1} * \hat{g}^{-1} * \nabla\hat{g}|\leq C(\varepsilon_0, n, k_0, k_1, k_2, \delta).
\end{align}
By \eqref{deTurck term estimate 1}, \eqref{The evolution equation of the deTurck term formula 2} and \eqref{deTurck Term estimate 2}, we obtain that
\begin{align}\label{deTurck term formula 3}
    \sup_{(t, x)\in [0, +\infty)\times \Omega_{d_0}} e^{\hat{\lambda} t} e^{\delta d(x)} |W(t, x)| \leq C(n,\varepsilon_0,\tau_0, k_0, \delta). 
\end{align}
By \eqref{deTurck term formula 4}, we obtain that 
\begin{align}\label{deTurck term formula 5}
    \sup_{(t, x)\in [0, +\infty)\times (M\setminus\Omega_{d_0})} e^{\hat{\lambda}t}e^{\delta d(x)}|W(t, x)| \leq e^{\delta d_0} \sup_{(t, x)\in [0, +\infty) \times M} e^{\hat{\lambda}t}|W(t, x)|\leq C(n,\varepsilon_0,\tau_0, k_0, \delta).
\end{align}
The zeroth-order estimate \eqref{The evolution equation of the deTurck term result 1} follows from \eqref{deTurck term formula 3} and \eqref{deTurck term formula 5}. The higher-order estimate \eqref{The evolution equation of the deTurck term result 2} is obtained via the De Giorgi-Nash-Moser iteration, as in \eqref{De Giorgi-Nash-Moser iteration 2} in \Cref{long time existence with weight}.
\end{proof}

\subsection{Scalar Curvature Preservation}
Let \(\hat R(t,x)\) and \(\tilde R(t,x)\) (or simply \(\hat R\) and \(\tilde R\) when there is no ambiguity) denote the scalar curvatures of \(\hat g(t,x)\) and \(\tilde g(t,x)\), respectively. We shall show that if the scalar curvature of the initial metric is greater than \(R_0\), then, \(\hat R\) and \(\tilde R\) remain greater than \(R_0\). This follows from the maximum principle, as in the previous arguments. We then prove that $|\hat R - R_0|$ decays exponentially in both time and space. Throughout this section, we use $|\cdot|_{\tilde{g}}$ and $|\cdot|_{\hat{g}}$ to denote pointwise norms with respect to metrics $\tilde{g}$ and $\hat{g}$, respectively.
\begin{lemma}\label{The evolution equation of the scalar curvature}
    Let $(M^n, g_0)$ be a strictly stable Cartan-Hadamard Einstein manifold satisfying the same conditions in \Cref{long time existence with weight}, Then, for any 
     \begin{align*}
        \delta \in \left( 0, \frac{\tau_0}{2} + \sqrt{\frac{\tau_0^2}{4} - 2\frac{R_0}{n}} \right) \cap \left( \tau_0, \tau_0 + 2\sqrt{\frac{\tau_0^2}{4} - 2\gamma} \right),
    \end{align*}
    there exist $\varepsilon_0 > 0$ and $\lambda_R > 0$, such that for any Riemannian metric $g$ on $M$, if 
    \begin{align*}
         \|e^{\delta d}(g - g_0)\|_{C^1} \leq \varepsilon_0,
    \end{align*}
    the solution $\hat{g}(t, \cdot)$ to the NRDF \eqref{NRDF}, starting at $g(\cdot)$, exists for all the time, satisfying that if $R \geq R_0$, then, 
    \begin{align}\label{The inequality of the scalar curvature}
        \hat{R} \geq R_0,
    \end{align}
    where $R$ is the scalar curvature of $g$. Furthermore,
    \begin{align}\label{scalar curvature estimate with weight}
        |\hat{R}(t, x) - R_0(x)|\leq C(n,\varepsilon_0,\tau_0, k_0, \delta)\cdot (1 + \frac{1}{\sqrt{t}}) e^{-\lambda_R t} e^{-\delta d(x)}.
    \end{align} 
\end{lemma}
\begin{proof}

The long-time existence of the NRDF \eqref{NRDF} follows directly from \Cref{long time existence with weight} with $\tau = \frac{\delta}{2}$. In this lemma, we refine the constants $\varepsilon_0 > 0$ and $\lambda > 0$ from \Cref{long time existence with weight}. The proof is divided into three steps: the first establishes \eqref{The inequality of the scalar curvature}, while the remaining two address \eqref{scalar curvature estimate with weight}.

\textbf{Step 1:} In this step, we prove the non-negativity of $\hat{R} - R_0$, assuming $R \geq R_0$. We consider the equation for $\tilde{R} - R_0$, since it is tidier. By direct computation, we obtain
\begin{align}\label{evolution equation of scalar curvature without weight}
    \frac{\partial}{\partial t} (\tilde{R} - R_0) = \Delta_{\tilde{g}}(\tilde{R} - R_0) + 2|\tilde{Ric} - \frac{R_0}{n} \tilde{g}|_{\tilde{g}}^2 + 2\frac{R_0}{n} (\tilde{R} - R_0),
\end{align}
which implies that 
\begin{align}
    \frac{\partial}{\partial t} (\tilde{R} - R_0) \geq&\; \Delta_{\tilde{g}}(\tilde{R} - R_0) + 2\frac{R_0}{n} (\tilde{R} - R_0).
\end{align}
By the maximum principle (Theorem 4.3 in \cite{QSW2013}), we have $\tilde{R} \geq R_0$. Furthermore, by the invariance of the scalar curvature under the  diffeomorphisms defined by \eqref{gauge term diffeomorphism}, we have $\hat{R} \geq R_0$. 

\textbf{Step 2:}
In this step, we shall show that 
\begin{align}\label{short time curvature}
    |\hat{R}(t, x) - R_0| \leq C(n, \varepsilon_0, \delta, k_0)(1 + \frac{1}{\sqrt{t}})e^{-\delta d(x)},\quad \text{for }\; t\in (0, T], 
\end{align}
where the above $T$ is as in \Cref{short time existence with weight}. In fact, by the fact
\begin{equation}\label{evolution equation of scalar curvature formula 1}
    \begin{split}
        &|\hat{R} - R_0|\\
        =&\; |\nabla^2\hat{g}* (\hat{g} + (\hat{g}^{-1} - g^{-1}))\\
        &+ (\nabla\hat{g} * \nabla\hat{g} * \hat{g}^{-1} * \hat{g}^{-1} + (\hat{g}^{-1} - g^{-1}) * \nabla^2\hat{g}) * (\hat{g} + (\hat{g}^{-1} - g^{-1}))\\
        &+ (\hat{g}^{-1} - g^{-1}) * \mathrm{Ric}|\\
        \leq&\; C(n, \|\hat{g}^{-1} - g^{-1}\|_{C^0}, \|\hat{g}^{-1}\|_{C^0}, \|\nabla\hat{g}\|_{C^0}, \|\mathrm{Rm}\|_{C^0})\cdot (|\nabla\hat{g}| + |\nabla^2 \hat{g}| + |\hat{g}^{-1} - g^{-1}|)
    \end{split}
\end{equation}
and \Cref{short time existence with weight}, we obtain \eqref{short time curvature}.

Note that we cannot directly apply \Cref{long time existence with weight} onto \eqref{evolution equation of scalar curvature formula 1} to establish the long-time part of \eqref{scalar curvature estimate with weight}, due to the mismatch between the spatial decay rate $\tau$ in \Cref{long time existence with weight} and $\delta$ in this lemma.

\textbf{Step 3:}
In this step, we address the long-time part of \eqref{scalar curvature estimate with weight}. Again, We proceed via the NRF \eqref{NRF}. Let $\tilde{g}(t,x)$ denote a solution to the NRF \eqref{NRF} with initial data $g$. The metrics $\tilde{g}$ and $\hat{g}$ are related by diffeomorphisms $\Phi_t : M \to M$ defined in \eqref{gauge term diffeomorphism}, via
\begin{align}
    \tilde{g}(t, \cdot) = \Phi_t^{*} \hat{g}(t, \cdot).
\end{align}
Denote the Ricci curvatures of $\hat{g}(t,x)$ and $\tilde{g}(t,x)$ by $\hat{\mathrm{Ric}}(t,x)$ and $\tilde{\mathrm{Ric}}(t,x)$, respectively. Then,
\begin{equation}\label{relations of the scalar curvature of NRF and NRDF}
    \tilde{R}(t,x) = \hat{R}(t,\Phi_t(x))\;\; \text{ and }\;\;\;
    \tilde{\mathrm{Ric}}(t,x) = \Phi_t^{*}\bigl(\hat{\mathrm{Ric}}\bigr)(t,x).
\end{equation}
By direct computation, the evolution inequality for $e^{\lambda_R t} e^{\delta d} (\tilde{R} - R_0)$ is 
\begin{equation}\label{the evolution equation of the scalar curvature formula}
    \begin{split}
        \frac{\partial}{\partial t} e^{\lambda_R t}e^{\delta d} (\tilde{R} - R_0) \leq&\; \Delta_{\tilde{g}} \left[ e^{\lambda_R t}e^{\delta d} (\tilde{R} - R_0) \right] - 2\delta \tilde{\nabla}d \cdot \tilde{\nabla} \left[ e^{\lambda_R t}e^{\delta d} (\tilde{R} - R_0) \right]\\ 
        &+ A\cdot e^{\lambda_R t}e^{\delta d}(\tilde{R} - R_0) + 2e^{\lambda_R t} e^{\delta d} \left| \tilde{\mathrm{Ric}} - \frac{R_0}{n}\tilde{g} \right|_{\tilde{g}}^2,
    \end{split}
\end{equation}
where 
\begin{align*}
    A = \big(\delta^2 - \tau_0\delta + 2\frac{R_0}{n} + \lambda_R +  \delta(\tau_0 - \Delta_g d) + \delta(\Delta_{\tilde{g}} d - \Delta_{g_0} d) + \delta^2 (|\nabla d|^2_{\tilde{g}} - 1)\big).
\end{align*}
Observe that the only term preventing us from applying the maximum principle is the last term in \eqref{the evolution equation of the scalar curvature formula}. We therefore estimate this term first. By the invariance of the Ricci curvature under diffeomorphisms $\Phi_t$, we obtain
\begin{equation}\label{Ricci curvature for NRDF}
    \begin{split}
        &\; \left| \tilde{\mathrm{Ric}}(t, x) - \frac{R_0}{n}\tilde{g}(x) \right|_{\tilde{g}(t, x)} = \left| \Phi_t^*(\hat{\mathrm{Ric}})(t, x) - \frac{R_0}{n}\Phi_t^*(\hat{g})(t, x)\right|_{\Phi_t^{*}(\hat{g})(t, x))}\\
        =&\; \left|\hat{\mathrm{Ric}}(t, \Phi^{-1}_t(x)) - \frac{R_0}{n}\hat{g}(t, \Phi^{-1}_t(x)) \right|_{\hat{g}(t, \Phi^{-1}_t(x))}.
    \end{split}
\end{equation}
By applying \Cref{long time existence with weight} with $\tau = \frac{\delta}{2}$, we obtain
\begin{equation}\label{Ricci curvature for NRDF 3}
    \begin{split}
        &\; \left| \hat{\mathrm{Ric}}(t, x) - \frac{R_0}{n} \hat{g}(t, x) \right|_{\hat{g}(t, x)} \\
        \leq& \; C(n, \varepsilon_0) \cdot \left| \hat{\mathrm{Ric}}(t, x) - \frac{R_0}{n} \hat{g}(t, x) \right| \\
        \leq&\; C(n, \varepsilon_0)\cdot \left( \left| \hat{\mathrm{Ric}}(t, x) - \frac{R_0}{n}g(x) \right| + \left| \frac{R_0}{n} \right| \cdot \left| \hat{g}(t, x) - g(x) \right| \right)\\
        \leq&\; C(n, \varepsilon_0)\cdot \left| \hat{g}^{-1}*\nabla^2\hat{g} + \hat{g}^{-1} * \hat{g}^{-1} *\nabla\hat{g} * \nabla \hat{g} \right| + \left| \frac{R_0}{n} \right| \cdot \left| \hat{g}(t, x) - g(x) \right| \\
        \leq&\; C(n, \varepsilon_0)\cdot \left( |\hat{g}^{-1}|\cdot |\nabla^2 \hat{g}| + |\hat{g}^{-1}| \cdot |\nabla \hat{g}|^2 \right) + \left| \frac{R_0}{n} \right| \cdot \left| \hat{g}(t, x) - g(x) \right| \\
        \leq&\; C(n,\varepsilon_0,\tau_0, k_0, \delta) \left( 1 + \frac{1}{\sqrt{t}} \right) e^{-\lambda t}\cdot e^{-\frac{\delta}{2} d(x)}.
    \end{split}
\end{equation}
Therefore, combining with \eqref{Ricci curvature for NRDF} and \eqref{Ricci curvature for NRDF 3}, we get 
\begin{equation}\label{evolution equation of scalar curvature formula 2}
    \begin{split}
        &\left| \hat{\mathrm{Ric}}(t, \Phi_t^{-1}(x)) - \frac{R_0}{n}\hat{g}(t, \Phi_t^{-1}(x)) \right|_{\hat{g}(t, \Phi_t^{-1}(x))}\\
        \leq&\; C(n,\varepsilon_0,\tau_0, k_0, \delta) \left(1 + \frac{1}{\sqrt{t}} \right) e^{-\lambda t} e^{-\frac{\delta}{2}d(\Phi_t^{-1}(x))}\\
        \leq&\; C(n,\varepsilon_0,\tau_0, k_0, \delta)\left(1 + \frac{1}{\sqrt{t}} \right) e^{-\lambda t} e^{-\frac{\delta}{2}d(x)} e^{\frac{\delta}{2}(d(x) - d(\Phi_t^{-1}(x)))}\\
        \leq&\; C(n,\varepsilon_0,\tau_0, k_0, \delta) \left(1 + \frac{1}{\sqrt{t}} \right) e^{-\lambda t} e^{-\frac{\delta}{2}d(x)} e^{\frac{\delta}{2}\cdot \mathrm{dist}(x, \Phi_t^{-1}(x))}\\
        \leq&\; C(n,\varepsilon_0,\tau_0, k_0, \delta) \left(1 + \frac{1}{\sqrt{t}} \right) e^{-\lambda t} e^{-\frac{\delta}{2}d(x)}.
    \end{split}
\end{equation}
The last inequality is by
\begin{align}\label{diffeo dist bounded}
    dist(x, \Phi_t^{-1}(x))\leq C(n,\varepsilon_0,\tau_0, k_0, \delta).
\end{align}
In fact, by \eqref{long time with weight result 1} of \Cref{long time existence with weight} with $\tau = \frac{\delta}{2}$, we have  
\begin{equation}
    \begin{split}
        |W(s, \Phi^{-1}_{s}(x))| =&\; |\hat{g}^{-1} * \hat{g}^{-1} * \nabla\hat{g}|\\
        \leq&  C(n) |\nabla(\hat{g}(t, \Phi_{s}^{-1}(x)) - g_0(\Phi_{s}^{-1}(x)))|\\
        \leq& C(n,\varepsilon_0,\tau_0, k_0, \delta) e^{-\lambda t}\cdot e^{- \frac{\delta}{2} d(\Phi_{s}^{-1}(x))}\\
        \leq& C(n,\varepsilon_0,\tau_0, k_0, \delta)e^{-\lambda t}.
    \end{split}
\end{equation}
Together with 
\begin{align}
    dist(x, \Phi_t^{-1}(x)) \leq \int_{0}^{t} |-W(s, \Phi_s^{-1}(x))| ds, 
\end{align}
\eqref{diffeo dist bounded} follows. Similarly, we can show that 
\begin{align}\label{evolution equation of scalar curvature formula 8}
    dist(x, \Phi_t(x))\leq C(n,\varepsilon_0,\tau_0, k_0, \delta).
\end{align}
Then, by \eqref{Ricci curvature for NRDF} and \eqref{evolution equation of scalar curvature formula 2}, we obtain
\begin{equation}\label{evolution equation of scalar curvature formula 3}
    \begin{split}
        |\tilde{\mathrm{Ric}}(t, x) - \frac{R_0}{n}\tilde{g}(x)|_{\tilde{g}(t, x)} \leq C(n,\varepsilon_0,\tau_0, k_0, \delta, T) e^{-\lambda t} e^{- \frac{\delta}{2} d(x)},\;\;\; \text{ for }\; (t, x)\in [T, +\infty) \times M.
    \end{split}
\end{equation}
In addition, for
\begin{align*}
    \delta \in \left(0, \frac{\tau_0}{2} + \sqrt{\frac{\tau_0^2}{4} - 2\frac{R_0}{n}} \right) \cap \left(\tau_0, \tau_0 + 2\sqrt{\frac{\tau_0^2}{4} - 2\gamma}\right), 
\end{align*}
we can always find $d_0 > 0$ sufficiently large and $\varepsilon_0, \lambda_R > 0$ sufficiently small such that for $(t, x)\in [T, +\infty)\times \Omega_{d_0}$,
\begin{align}\label{evolution equation of scalar curvature formula 5}
    A < 0 \quad \text{and} \quad \lambda_R < \lambda,
\end{align}
where $\Omega_{d_0}:= \{ x\in M \; | \; d(x) > d_0\}$. In fact, taking $t_0 = T$ in \Cref{long time existence without weights}, we may choose sufficiently small $\varepsilon_0 > 0$ and sufficiently large $d_0 > 0$ such that $|\Delta_{\tilde{g}}d - \Delta d|$ and $\big||\nabla d|_{\tilde{g}} - |\nabla d|\big|$ are sufficiently small for any $(t,x)\in[T, +\infty) \times \Omega_{d_0}$. Combining \eqref{the evolution equation of the scalar curvature formula}, \eqref{evolution equation of scalar curvature formula 3} and \eqref{evolution equation of scalar curvature formula 5}, we have  
\begin{equation}
    \begin{split}
        \frac{\partial}{\partial t} e^{\lambda_R t}e^{\delta d} (\tilde{R} - R_0) \leq&\; \Delta_{\tilde{g}} \left[ e^{\lambda_R t}e^{\delta d}(\tilde{R} - R_0) \right]- 2\delta \tilde{\nabla} d\cdot \tilde{\nabla} \left[ e^{\lambda_R t}e^{\delta d} (\tilde{R} - R_0) \right]\\ 
        &+ A\cdot e^{\lambda_R t}e^{\delta d}(\tilde{R} - R_0) + 2C(n, \varepsilon_0, \tau_0, k_0, \delta, T)e^{-\lambda_Rt}, 
    \end{split}
\end{equation}
on $[T, +\infty)\times \Omega_{d_0}$. Let 
\begin{equation}
    \begin{split}
        \varphi := e^{\lambda_R t} e^{\delta d}(\tilde{R} - R_0) + \frac{2C(n,\varepsilon_0,\tau_0, k_0, \delta, T)}{\lambda_R} e^{-\lambda_R t}.
    \end{split}
\end{equation}
Then, we obtain
\begin{align}\label{evolution equation of scalar curvature formula 6}
    \frac{\partial}{\partial t} \varphi \leq \Delta_{\tilde{g}}\varphi - 2\delta \tilde{\nabla} d\cdot \tilde{\nabla}\varphi + A\cdot\varphi,\quad \text{on }\; [T, +\infty)\times \Omega_{d_0}. 
\end{align}
By \eqref{The inequality of the scalar curvature}, \eqref{evolution equation of scalar curvature formula 5} and \eqref{evolution equation of scalar curvature formula 6}, we obtain
\begin{align}
    \frac{\partial}{\partial t} \varphi \leq \Delta_{\tilde{g}}\varphi - 2\delta \tilde{\nabla} d \cdot \tilde{\nabla} \varphi,\quad \text{on }\; [T, +\infty)\times \Omega_{d_0}.
\end{align}
By the maximum principle (Theorem 4.3 in \cite{QSW2013}), we have that 
\begin{align}\label{scalar curvature long time part 1}
    \sup_{(t, x)\in [T, +\infty)\times \Omega_{d_0}}\varphi(t, x) \leq \max\Big\{\sup_{x\in \Omega_{d_0}}\varphi(T, x), \sup_{(t, x)\in [T, +\infty)\times\partial\Omega_{d_0}}\varphi(t, x)\Big\}.
\end{align}
By \eqref{short time curvature} and \eqref{evolution equation of scalar curvature formula 8}, we have
\begin{align*}
    |\tilde{R}(t, x) - R_0| =&\; |\hat{R}(t, \Phi_t(x)) - R_0|\\
    \leq&\;  C(n, \|\hat{g}^{-1} - g^{-1}\|, \|\hat{g}^{-1}\|, \|\nabla\hat{g}\|, \|\mathrm{Rm}\|)\cdot (|\nabla\hat{g}| + |\nabla^2 \hat{g}| + |\hat{g}^{-1} - g^{-1}|)\\
    \leq&\; C(n, \varepsilon_0, k_0, \delta)(1 + \frac{1}{\sqrt{t}}) e^{-\delta d(\Phi_t(x))}\\
    \leq&\; C(n, \varepsilon_0, k_0, \delta) (1 + \frac{1}{\sqrt{t}}) e^{-\delta d(x)} e^{\delta\ \mathrm{dist}(x, \Phi_{t}(x))}\\
    \leq&\; C(n,\varepsilon_0,\tau_0, k_0, \delta) (1 + \frac{1}{\sqrt{t}}) e^{-\delta d(x)},\quad \text{for }\; (t, x)\in (0, T]\times M,
\end{align*}
which implies that 
\begin{align}\label{scalar curvature long time part 2}
    \sup_{x\in \Omega_{d_0}}\varphi(T, x) \leq C(n,\varepsilon_0,\tau_0, k_0, \delta, T).
\end{align}
Again, by \Cref{long time existence with weight} with $\tau = \frac{\delta}{2}$ and \eqref{diffeo dist bounded}, for any $(t, x)\in [T, +\infty)\times M$,
\begin{equation}\label{evolution equation of scalar curvature formula 9}
    \begin{split}
        |\tilde{R}(t, x) - R_0| =&\; |\hat{R}(t, \Phi_t(x)) - R_0|\\
        \leq&\;  C(n, \|\hat{g}^{-1} - g^{-1}\|, \|\hat{g}^{-1}\|, \|\nabla\hat{g}\|, \|\mathrm{Rm}\|)\cdot (|\nabla\hat{g}| + |\nabla^2 \hat{g}| + |\hat{g}^{-1} - g^{-1}|)\bigg|_{(t, \Phi_t(x))}\\
        \leq&\; C(n, \varepsilon_0, k_0, T) e^{-\lambda t} e^{-\frac{\delta}{2} d(\Phi_t(x))}\\
        \leq&\; C(n, \varepsilon_0, k_0, T) e^{-\lambda_R t} e^{-\frac{\delta}{2} d(x)}e^{\frac{\delta}{2}\mathrm{dist}(x, \Phi_t(x))}\\
        \leq&\; C(n,\varepsilon_0,\tau_0, k_0, \delta, T) e^{-\lambda_R t} e^{-\frac{\delta}{2} d(x)}, 
    \end{split}
\end{equation}
which implies that  
\begin{equation}\label{scalar curvature long time part 3}
    \begin{split}
        \sup_{(t, x)\in [T, +\infty)\times \partial\Omega_{d_0}} \varphi(t, x) \leq&\; \sup_{(t, x)\in [T, +\infty) \times \partial \Omega_{d_0}} e^{\lambda_R t} e^{\delta d(x)} |\tilde{R}(t, ,x)- R_0| + C(n, \varepsilon_0, \tau_0, k_0, \delta, T)\\
        \leq&\; C(n,\varepsilon_0,\tau_0, k_0, \delta, T) \sup_{(t, x)\in [T, +\infty) \times \partial\Omega_{d_0}} e^{\frac{\delta}{2} d(x)}\\
        \leq&\; C(n,\varepsilon_0,\tau_0, k_0, \delta, T).
    \end{split}
\end{equation}
Thus, by \eqref{scalar curvature long time part 1},\eqref{scalar curvature long time part 2} and \eqref{scalar curvature long time part 3}, we derive 
\begin{align}\label{evolution equation of scalar curvature formula 10}
    \sup_{(t, x)\in [T, +\infty)\times \Omega_{d_0}}|\tilde{R}(t, x) - R_0| \leq C(n,\varepsilon_0,\tau_0, k_0, \delta, T) e^{-\lambda_R t} e^{- \delta d(x)}.
\end{align}
Moreover, by \eqref{evolution equation of scalar curvature formula 9}, 
\begin{equation}\label{evolution equation of scalar curvature formula 11}
    \begin{split}
        \sup_{(t, x)\in [T, +\infty) \times (M\setminus \Omega_{d_0})} e^{\lambda_R t} e^{\delta d(x)}|\tilde{R}(t, x) - R_0| \leq&\; C(n,\varepsilon_0,\tau_0, k_0, \delta, T)\sup_{(t, x)\in [T, +\infty)\times (M\setminus\Omega_{d_0})}e^{\frac{\delta}{2}d(x)}\\
        \leq&\; C(n,\varepsilon_0,\tau_0, k_0, \delta, T).
    \end{split}
\end{equation}
Combining \eqref{evolution equation of scalar curvature formula 10} and \eqref{evolution equation of scalar curvature formula 11}, we obtain 
\begin{equation}\label{evolution equation of scalar curvature formula 12}
    \begin{split}
        \sup_{(t, x)\in [T, +\infty)\times M}|\tilde{R}(t, x) - R_0| \leq C(n,\varepsilon_0,\tau_0, k_0, \delta) e^{-\lambda_R t} e^{- \delta d(x)}.
    \end{split}
\end{equation}
By \eqref{relations of the scalar curvature of NRF and NRDF}, \eqref{diffeo dist bounded} and \eqref{evolution equation of scalar curvature formula 12}, we have that for $(t, x)\in [T, +\infty) \times M$,
\begin{equation}\label{long time curvature}
    \begin{split}
        |\hat{R}(t, x) - R_0| = |\tilde{R}(t, \Phi_t^{-1}(x)) - R_0| \leq&\; C(n,\varepsilon_0,\tau_0, k_0, \delta) e^{-\lambda_R t} e^{- \delta d(\Phi_t^{-1}(x))} \\
        \leq&\; C(n,\varepsilon_0,\tau_0, k_0, \delta) e^{-\lambda_R t} e^{- \delta d(x)} e^{\delta\ \mathrm{dist}(x, \Phi^{-1}_s(x))}\\
        \leq&\; C(n,\varepsilon_0,\tau_0, k_0, \delta) e^{-\lambda_R t} e^{- \delta d(x)}. 
    \end{split}
\end{equation}
Finally, \eqref{scalar curvature estimate with weight} follows from \eqref{short time curvature} in step 2 and \eqref{long time curvature}. 
\end{proof}
Note that the admissible range of $\delta$ differs between \Cref{The evolution equation of the deTurck term} and \Cref{The evolution equation of the scalar curvature}. Since $R_0<0$ in our setting, we take
\begin{align}\label{final region of delta}
    \delta \in \left( 0, \frac{\tau_0}{2} + \sqrt{\frac{\tau_0^2}{4} - \frac{R_0}{n}} \right) \cap \left( \tau_0, \tau_0 + 2\sqrt{\frac{\tau_0^2}{4} - 2\gamma} \right)
\end{align}
throughout the subsequent sections. 
\section{Relative Volume Comparison and Rigidity}\label{sec: relative_volume}

In this section, we analyze the behavior of the relative volume $V_{g_0}(\hat{g}(t))$ along the NRDF \eqref{NRDF} under the assumption that $R \geq R_0$, where the background manifold, $(M^n, g_0)$, is a simply connected irreducible symmetric space of non-compact type of rank 1 and $R$ is the scalar curvature of a metric $g$ which is the initial metric of the NRDF \eqref{NRDF}, satisfying the perturbation condition $\|e^{\delta d}(g - g_0)\|_{C^1} < \varepsilon_0$ for sufficiently small $\varepsilon_0 > 0$ and $\delta$ satisfying \eqref{final region of delta}. Under these constraints, we establish \Cref{main theorem}. To avoid cumbersome notation, we use $C$ to denote generic positive constants that may vary from line to line but are independent of time and space, unless otherwise stated. For convenience, define
\begin{equation}\label{definition of partial relative volume}
    V_{g_0}(\Omega, \hat{g}) := \mathrm{Vol}(\Omega, \hat{g}) - \mathrm{Vol}(\Omega, g_0),
\end{equation}
for some bounded region $\Omega \subset M$. 

\textbf{Proof of \Cref{main theorem}: } We divide the proof into five steps. 

\textbf{Step 1:} For $(M, g_0)$ a simply connected irreducible symmetric space of non-compact type of rank~1, by \Cref{symmetric spaces of non-compact type of rank 1} we see that, under the dimension requirements of \Cref{main theorem}, we have
\begin{align}\label{key intersection set 1}
    \left(0, \frac{\tau_0}{2} + \sqrt{\frac{\tau_0^2}{4} - \frac{R_0}{n}} \right) \cap \left( \tau_0, \tau_0 + 2\sqrt{\frac{\tau_0^2}{4} - 2\gamma} \right)\neq \emptyset. 
\end{align}
In fact, it suffices to check if $\frac{\tau_0^2}{4} > 2\gamma$. Moreover, $(M, g_0)$ is a Cartan-Hadamard Einstein manifold and strictly stable (Section 5.3 in \cite{Ba2015}). Therefore, there is no obstacle for us to apply \Cref{short time existence with weight,long time existence with short time weight,long time existence with weight,The evolution equation of the deTurck term,The evolution equation of the scalar curvature} in the previous sections. 

\textbf{Step 2: } In this step, we shall show that $V_{g_0}(\hat g(t))$ exists for all $t\in[0,+\infty)$ and is independent of the choice of compact exhaustion. This follows from \Cref{existence of relative volume} and \Cref{long time existence with short time weight}. Specifically, for any $\delta>\tau_0$ and $T>0$, \Cref{long time existence with short time weight} yields
\begin{equation}\label{final proof formula 0}
\|e^{\delta d(\cdot)}\bigl(\hat{g}(t, \cdot)-g_0(\cdot)\bigr)\|_{C^1}
\le C(n,\varepsilon_0,\delta,\tau_0,k_0,T),
\quad \text{for }\; t\in[0,T].
\end{equation}
Combining with \Cref{existence of relative volume} completes the proof of the existence of $V_{g_0}(\hat{g}(t))$.

\textbf{Step 3: } In this step, we shall show 
\begin{equation}\label{non-increasing of relative volume}
    \frac{d}{dt}V_{g_0}(\hat{g}(t)) \leq 0,\quad \text{for }\; t\in (0, +\infty). 
\end{equation}
It suffices to show that 
\begin{equation}
    \begin{split}
        \frac{d}{dt}V_{g_0}(\hat{g}(t)) = -\int_{M} (R(\hat{g}(t)) - R_0)) d\mu_{\hat{g}(t)},\quad \text{and}\quad R(\hat{g}(t)) \geq R_0,
    \end{split}
\end{equation}
for any $t\in (0, +\infty)$. We first write down the evolution equation of the relative volume under the NRDF \eqref{NRDF} on $B(p_0, s)$ which is a natural compact exhaustion on a simply connected symmetric space of non-compact type of rank 1. In addition, in our setting, there is a global normal coordinate at a fixed point $p_0\in M$. That is $M\setminus \{p_0\} \cong \mathbb{R}_+ \times \mathbb{S}^{n - 1}$ with $g_0 = dr^2 + g_r$, where $g_r$ is a family of metrics on $\mathbb{S}^{n - 1}$. Moreover, 
\begin{align*}
    d\mu_{\hat{g}} =\; \sqrt{\mathrm{det}(\hat{g})}\ dr d\theta\;\;\; \text{and}\;\;\;
    d\mu_{g_0} = \sqrt{\mathrm{det}(g_0)}\ drd\theta = \sqrt{\mathrm{det}(g_r)}\ dr d\theta, 
\end{align*}
where $\theta\in \mathbb{S}^{n - 1}$, $d\theta$ is the volume form induced by the standard metric of unit sphere $\mathbb{S}^{n - 1}$, and $d\mu_{\hat{g}}$ and $d\mu_{g_0}$ are the volume forms for $\hat{g}$ and $g_0$ respectively. Thus, 
\begin{equation}\label{final proof formula 1}
    \begin{split}
        &\frac{d}{dt}V_{g_0}(B(p_0, s), \hat{g}) = \frac{d}{dt}(\mathrm{Vol}(B(p_0, s), \hat{g}) - \mathrm{Vol}(B(p_0, s), g_0))\\
        =&\; \frac{d}{dt}\int_{B(p_0, s)} d\mu_{\hat{g}} - d\mu_{g_0} = \frac{d}{dt}\int_{B(p_0, s)} \sqrt{\det(\hat{g})} - \sqrt{\det(g_0)}\;dr d\theta\\
        =&\; -\int_{B(p_0, s)}(R(\hat{g}) - R_0)\sqrt{\mathrm{det}(\hat{g})} \ drd\theta+\int_{B(p_0, s)} \hat{g}^{ij}\hat{\nabla}_{j} W_{i}\cdot \sqrt{\mathrm{det}(\hat{g})} drd\theta\\
        =&\; -\int_{B(p_0, s)}(R(\hat{g}) - R_0)\ d\mu_{\hat{g}} + \int_{B(p_0, s)} \hat{g}^{ij}\hat{\nabla}_{j} W_{i}\ d\mu_{\hat{g}}\\
        =&\; -\int_{B(p_0, s)}(R(\hat{g}) - R_0) \ d\mu_{\hat{g}} + \int_{\partial B(p_0, s)} \hat{g}^{ij} W_{i} v_{j} d\sigma_{\hat{g}},
    \end{split}
\end{equation}
where $W_i = \hat{g}_{ik} \hat{g}^{pq}\bigl(\Gamma_{pq}^k(\hat{g}(t)) - \Gamma_{pq}^k\bigr)$, $v$ denotes the unit outward normal to $\partial B(p_0, s)$ with respect to the metric induced by $\hat{g}$, and $d\sigma_{\hat{g}}$ is the corresponding induced volume form on $\partial B(p_0, s)$ (Remark: $B(p_0, s)$ is a geodesic ball with respect to $g_0$). We shall prove the limit of the above exists as $s \rightarrow +\infty$. By \eqref{sphere area blow up}, we obtain 
\begin{align}\label{area blow up}
    \sqrt{\mathrm{det}(g_r)} \leq C e^{\tau_1 r}, \quad\text{with }\; \tau_1 = \frac{\delta + \tau_0}{2}. 
\end{align}
By \Cref{long time existence with weight} with $\tau = \frac{\delta}{2}$, we obtain 
\begin{equation}\label{metric decay condition}
    |\hat{g}(t, x) - g(x)| \leq Ce^{-\lambda_R t}e^{-\frac{\delta}{2}d(x)},
\end{equation}
for some constant $\lambda_R > 0$ and $t \geq  0$. In addition, by \Cref{The evolution equation of the deTurck term} and \Cref{The evolution equation of the scalar curvature}, for $\delta$ satisfying \eqref{final region of delta} (in particular, $\delta > \tau_0$), we obtain
\begin{equation}\label{three conditions for main theorem}
    \begin{split}
        (1)&\; R(\hat{g})(t, x) \geq R_0;\\
        (2)&\; |R(\hat{g})(t, x) - R_0(x)| \leq C\cdot (1 + \frac{1}{\sqrt{t}}) \cdot e^{-\lambda_R t} \cdot e^{-\delta d(x)};\\
        (3)&\; |W(t, x)| \leq C\cdot e^{-\lambda_R t} e^{-\delta d(x)},
    \end{split}
\end{equation}
for $t > 0$. By \eqref{area blow up}, \eqref{metric decay condition} and condition (2) in \eqref{three conditions for main theorem}, we obtain 
\begin{equation}\label{curvature decay precursor}
    \begin{split}
        |R(\hat{g}) - R_0|\cdot \sqrt{\mathrm{det}(\hat{g})} \leq&\; |R(\hat{g}) - R_0|\sqrt{\mathrm{det}(g_r)} + |R(\hat{g}) - R_0|\cdot |\frac{\sqrt{\mathrm{det}(\hat{g})}}{\sqrt{\mathrm{det}(g_0)}} - 1|\cdot \sqrt{\mathrm{det}(g_r)}\\
        \leq&\; |R(\hat{g}) - R_0|\sqrt{\mathrm{det}(g_r)} + |R(\hat{g}) - R_0|\cdot |\hat{g} - g_0|\cdot \sqrt{\mathrm{det}(g_r)}\\
        \leq&\; C\cdot (1 + \frac{1}{\sqrt{t}}) e^{-\lambda_R t} e^{-(\delta - \tau_1)r} + C(1 + \frac{1}{\sqrt{t}}) e^{-2\lambda_R t} e^{-(\frac{3}{2}\delta - \tau_1)r} \\
        \leq&\; C\cdot (1 + \frac{1}{\sqrt{t}}) e^{-\lambda_R t} e^{-(\delta - \tau_1)r}. 
    \end{split}
\end{equation}
Thus, 
\begin{equation}\label{curvature decay}
    \begin{split}
        &\int_{B(p_0, s)}|R(\hat{g}) - R_0|d\mu_{\hat{g}} =\int_{0}^{s}\int_{\mathbb{S}^{n - 1}}|R(\hat{g}) - R_0|\sqrt{\mathrm{det}(\hat{g})}\ d\theta dr\\
        \leq&\;  C\int_{0}^{s}\int_{\mathbb{S}^{n - 1}} \left(1 + \frac{1}{\sqrt{t}} \right) e^{-\lambda_R t} e^{-(\delta - \tau_1)r} d\theta dr\\
        \leq&\; C \cdot \left(1 + \frac{1}{\sqrt{t}} \right) e^{- \lambda_R t} (1 - e^{- (\delta - \tau_1) s}), 
    \end{split}
\end{equation}
which implies that 
\begin{align}\label{curvature converge}
    \int_{M}(R(\hat{g}) - R_0)d\mu_{\hat{g}}\;\;\;  \text{is absolutely integrable, for any fixed }\; t\in (0, +\infty).   
\end{align}
Thus, 
\begin{align}
    \lim_{s\rightarrow +\infty} \int_{B(p_0, s)} (R(\hat{g}) - R_0)d\mu_{g_0} = \int_{M} (R(\hat{g}) - R_0) d\mu_{g_0},\quad \text{for }\; t\in (0, +\infty). 
\end{align}
By \eqref{area blow up}, \eqref{metric decay condition} and condition (3) in \eqref{three conditions for main theorem}, we obtain
\begin{equation}\label{DeTurck term decay process}
    \begin{split}
        |\int_{\partial B(p_0, s)}\hat{g}^{ij}& W_i v_j\ d\sigma_{\hat{g}}|\leq\; \int_{\partial B(p_0, s)}|\hat{g}^{ij} W_i v_j|\ d\sigma_{\hat{g}}\\
        \leq&\; C\int_{\partial B(p_0, s)} |W|\ d\sigma_{g_{0}}\\
        \leq&\; C\int_{\mathbb{S}^{n - 1}}|W|\cdot\sqrt{\mathrm{det}(g_s)}\ d\theta \\
        \leq&\; Ce^{-\lambda_R t}e^{-(\delta - \tau_1)s},
    \end{split}
\end{equation}
which implies that 
\begin{align}\label{DeTurck term converge}
    \lim_{s\rightarrow 0}\int_{\partial B(p_0, s)}\hat{g}^{ij} W_i v_j\ d\sigma_{\hat{g}} = 0\quad \text{for }\; t\in [0, +\infty). 
\end{align}
By \eqref{final proof formula 1}, \eqref{curvature converge} and \eqref{DeTurck term converge}, we derive 
\begin{equation}\label{outer limit}
    \begin{split}
        \lim_{s\rightarrow +\infty}\frac{d}{dt} V_{g_0}(B(p_0, s), \hat{g}(t)) = - \int_{M} (R(\hat{g}(t)) - R_0) d\mu_{\hat{g}}. 
    \end{split}
\end{equation} 
The above convergence is uniform with respect to $t\in [t_0, +\infty)$ for any $t_0 > 0$. On the other hand, by \eqref{final proof formula 0} and \eqref{area blow up}, 
\begin{equation}\label{volume converge precursor}
    \begin{split}
        \Big|(\frac{\sqrt{\mathrm{det}(\hat{g})}}{\sqrt{\mathrm{det}(g_0)}} - 1) \cdot\sqrt{\mathrm{det}(g_r)}\Big| \leq&\; |\hat{g} - g_0| \cdot \sqrt{\mathrm{det}(g_r)} \\
        \leq&\; C(n,\varepsilon_0,\tau_0, k_0, \delta, T) e^{-(\delta - \tau_1)r},\quad \text{for }\; t\in [0, T]. 
    \end{split}
\end{equation}
Thus, 
\begin{equation}
    \begin{split}
        &\Big|V_{g_0}(B(p_0, s_2), \hat{g}(t)) - V_{g_0}(B(p_0, s_1), \hat{g}(t))\Big|\\
        =& \Big|\int_{B(p_0, s_2)\setminus B(p_0, s_1)} d\mu_{\hat{g}} - d\mu_{g_0}\Big| = \Big|\int_{s_1}^{s_2}\int_{\mathbb{S}^{n - 1}} (\frac{\sqrt{\mathrm{det}(\hat{g})}}{\sqrt{\mathrm{det}(g_0)}} - 1) \cdot\sqrt{\mathrm{det}(g_r)}\ d\theta dr \Big|\\
        \leq&\; C(n,\varepsilon_0,\tau_0, k_0, \delta, T) \int_{s_1}^{s_2}e^{- (\delta - \tau_1) r}\  dr\\
        \leq&\; C(n,\varepsilon_0,\tau_0, k_0, \delta, T)(e^{-(\delta - \tau_1)s_1} - e^{-(\delta - \tau_1)s_2}),
    \end{split}. 
\end{equation}
for $t\in [0, T]$, which implies 
\begin{align}\label{inner limit}
    \lim_{s \rightarrow +\infty} V_{g_0}(B(p_0, s), \hat{g}) = V_{g_0}(\hat{g})
\end{align}
is uniform convergence with respect to $t\in [0, T]$, for any $T > 0$. Combining with \eqref{outer limit} and \eqref{inner limit}, we obtain 
\begin{equation}\label{final proof formula 2}
    \begin{split}
        \frac{d}{dt}V_{g_0}(\hat{g}(t)) =&\; 
        \frac{d}{dt}\lim_{s\rightarrow +\infty}V_{g_0}(B(p_0, s), \hat{g})= \lim_{s\rightarrow +\infty} \frac{d}{dt} V_{g_0}(B(p_0, s), \hat{g})\\ 
        =& -\int_{M}(R(\hat{g}) - R_0)d\mu_{\hat{g}}, 
    \end{split}
\end{equation} 
with respect to $t\in [t_0, T]$, for any $T > t_0 > 0$. Therefore, by \eqref{final proof formula 2} and the condition (3) in \eqref{three conditions for main theorem}, we obtain \eqref{non-increasing of relative volume}.

\textbf{Step 4: } In this step, we shall show that 
\begin{equation}\label{limit of relative volume}
    \lim_{t\rightarrow +\infty} V_{g_0}(\hat{g}(t)) = 0.
\end{equation}
By \eqref{final proof formula 1}, \eqref{curvature decay precursor}, \eqref{DeTurck term decay process}, \eqref{volume converge precursor} and \eqref{final proof formula 2}, for any small $\eta > 0$, we can always find $s_3 > 0$ such that 
\begin{equation}\label{limit of relative volume formula 1}
    \begin{split}
        &\Big|V_{g_0}(M\setminus B(p_0, s_3), \hat{g}) \Big| \leq \Big|V_{g_0}(M\setminus B(p_0, s_3), \hat{g}) - V_{g_0}(M \setminus B(p_0, s_3), g)\Big| + \Big| V_{g_0}(M \setminus B(p_0, s_3), g)\Big|\\
        =&\; \big|-\int_{0}^{t}\int_{M\setminus B(p_0, s_3)}(R(\hat{g}) - R_0)\ d\mu_{\hat{g}} dw - \int_{0}^{t} \int_{\partial B(p_0, s_3)} \hat{g}^{ij}W_i v_j\ d\sigma_{\hat{g}}dw \Big|+ \Big|\int_{M \setminus B(p_0, s_3)} d\mu_{\hat{g}} - d\mu_{g_0}\big|\\
        \leq&\;\int_{0}^{t}\int_{s_3}^{+\infty}\int_{\mathbb{S}^{n - 1}}|R(\hat{g}) - R_0|\sqrt{\mathrm{det}(\hat{g})}\ d\theta dr dw + \int_{0}^{t} \int_{\partial B(p_0, s_3)} |\hat{g}^{ij}W_i v_j|\ d\sigma_{\hat{g}}dw\\ 
        &+ \int_{s_3}^{+\infty}\int_{\mathbb{S}^{n - 1}}|\frac{\sqrt{\mathrm{det}(g)}}{\sqrt{\mathrm{det}(g_0)}} - 1|\cdot \sqrt{\mathrm{det}(g_r)}\ d\theta dr\\
        \leq&\; C\int_{0}^{t}\int_{s_3}^{+\infty}(1 + \frac{1}{\sqrt{w}}) e^{-\lambda_R w} e^{-(\delta - \tau_1)r}\ drdw + C\int_{0}^{t}e^{-\lambda_R w}e^{-(\delta - \tau_1)s_3}\ dw + C\int_{s_3}^{+\infty} e^{-(\delta - \tau_1)r} dr\\
        \leq& Ce^{-(\delta - \tau_1) s_3} < \frac{\eta}{2}.  
    \end{split}
\end{equation}
Fix the above $s_3$. By \eqref{final proof formula 1}, \eqref{curvature decay} and \eqref{DeTurck term decay process} and \Cref{long time existence with weight} with $\tau = \frac{\delta}{2}$, we can always find $T_1 > 0$ such that for any $t > T_1$
\begin{equation}\label{limit of relative volume formula 2}
    \begin{split}
        &\Big| V_{g_0}(B(p_0, s_3), \hat{g}(t))\Big| \leq\Big| V_{g_0}(B(p_0, s_3), \hat{g}(t)) - V_{g_0}(B(p_0, s_3), \hat{g}(T_1)) \Big| + \Big|V_{g_0}(B(p_0, s_3), \hat{g}(T_1)) \Big|\\
        \leq&\; \int_{T_1}^{t}\int_{0}^{s_3}\int_{\mathbb{S}^{n - 1}}|R(\hat{g}) - R_0|\sqrt{\mathrm{det}(\hat{g})}\ d\theta dr dw + \int_{T_1}^{t} \int_{\partial B(p_0, s_3)} |\hat{g}^{ij}W_i v_j|\ d\sigma_{\hat{g}}dw\\
        &+ C\int_{0}^{s_3}|\hat{g}(T_1, .) - g_0(\cdot)|\sqrt{\mathrm{det}(g_r)} dr\\
        \leq&\;C\int_{T_1}^{t}\int_{0}^{s_3}(1 + \frac{1}{\sqrt{w}}) e^{-\lambda_R w} e^{-(\delta - \tau_1)r}\ drdw + C\int_{T_1}^{t}e^{-\lambda_R w}e^{-(\delta - \tau_1)s_3}\ dw + C\int_{0}^{s_3} e^{-\lambda_R T_1}e^{-(\frac{\delta}{2} - \tau_1)r} dr\\
        \leq&\; C e^{(\tau_1 - \frac{\delta}{2})s_3}(1 + \frac{1}{\sqrt{T_1}})e^{-\lambda_R T_1} < \frac{\eta}{2}.
    \end{split}
\end{equation}
Combining \eqref{limit of relative volume formula 1} and \eqref{limit of relative volume formula 2}, we obtain \eqref{limit of relative volume}. Finally, combining \eqref{non-increasing of relative volume} and \eqref{limit of relative volume}, we obtain
\begin{align}\label{positive volume}
    V_{g_0}(g) = V_{g_0}(\hat{g}(0)) \geq V_{g_0}(\hat{g}(t)) \geq \lim_{t\rightarrow +\infty} V_{g_0}(\hat{g}(t)) = 0. 
\end{align}

\textbf{Step 5: } In this step, we shall show that if
\begin{align*}
    V_{g_0}(g) = 0,
\end{align*}
there is a diffeomorphism $\Phi: M \rightarrow M$ such that 
\begin{align*}
    \Phi^*(g) = g_0. 
\end{align*}
By \eqref{positive volume}, we obtain that for any $t\in (0, +\infty)$, 
\begin{equation*}
    0 = V_{g_0}(g) = V_{g_0}(\hat{g}(0)) \geq V_{g_0}(\hat{g}(t)) \geq \lim_{t\rightarrow +\infty} V_{g_0}(\hat{g}(t)) = 0,
\end{equation*}
which implies 
\begin{align*}
    V_{g_0}(\hat{g}(t)) \equiv 0,\quad \text{for any }\; t\in [0, +\infty).  
\end{align*}
By \eqref{final proof formula 2}, we obtain  
\begin{equation*}
    \int_{M} (R(\hat{g}) - R_0) d\mu_{\hat{g}} = -\frac{d}{dt}V_{g_0}(\hat{g}(t)) \equiv 0.
\end{equation*}
Combining with the condition (1) in \eqref{three conditions for main theorem}, we obtain 
\begin{align*}
    R(\hat{g})(t, x) \equiv R_0,\quad \text{for any }\; (t, x)\in [0, +\infty) \times M,  
\end{align*}
which implies that 
\begin{align*}
    R(\tilde{g})(t, x) \equiv R_0,\quad \text{for any }\; (t, x)\in [0, +\infty) \times M. 
\end{align*}
where $\tilde{g}(t, x)$ is the corresponding solution to the NRF \eqref{NRF}, with initial data $g(x)$. By the evolution equation of the scalar curvature \eqref{evolution equation of scalar curvature without weight} of $\tilde{g}(t, x)$, we derive 
\begin{align*}
    \mathrm{Ric}[\tilde{g}](t, x) \equiv \frac{R_0}{n}\tilde{g}(t, x),\quad \text{for any }\; (t, x)\in [0, +\infty) \times M, 
\end{align*}
which implies that
\begin{align*}
    \tilde{g}(t)\equiv \tilde{g}(0) = g.
\end{align*}
Combining with \eqref{gauge term diffeomorphism}, we obtain 
\begin{align}\label{nrdf and nrf for still case}
    \Phi_t^*(\hat{g}(t)) = \tilde{g}(t) \equiv g. 
\end{align}
On the other hand, by \eqref{gauge term diffeomorphism} and \Cref{The evolution equation of the deTurck term}, there is a diffeomorphism $\Phi : M \rightarrow M$ such that 
\begin{align}\label{convergence of gauge diffeomorphism}
    \lim_{t\rightarrow +\infty}\Phi_t^* =\Phi\quad \text{in }\; C^{\infty}(M, M). 
\end{align}
By \Cref{long time existence without weights}, \eqref{nrdf and nrf for still case} and \eqref{convergence of gauge diffeomorphism}, we obtain
\begin{align}
    \Phi^*(g_0) = \lim_{t\rightarrow +\infty}\Phi_t^*(\hat{g}(t)) \equiv g.
\end{align}

\section*{Acknowledgement}
We would like to thank Professor Jie Qing, the advisor of the first two authors; Professor Wei Yuan, the advisor of the third author; and Professor Fang Wang for their continuous support. Additionally, we are grateful to Dr. Xu Gao and Dr. Yiyi Zhu for their valuable insights and numerous helpful discussions.

\printbibliography
\end{document}